\documentclass[11pt]{amsart}

\usepackage[mathscr]{euscript}
\usepackage{bbm}
\usepackage{amssymb}
\usepackage{graphicx}
\usepackage{pb-diagram,pb-xy} 
\usepackage[all,cmtip,arrow]{xy} 
\usepackage[margin=1in]{geometry}  
\usepackage{etoolbox} 

\parindent=0pt
\parskip=10pt
\setlength{\marginparwidth}{90pt}


\numberwithin{equation}{section}

\theoremstyle{plain}
\newtheorem{theorem}[equation]{Theorem}

\newtheorem{lemma}[equation]{Lemma}

\newtheorem{corollary}[equation]{Corollary}
\newtheorem{proposition}[equation]{Proposition}
\newtheorem{conjecture}[equation]{Conjecture}

\newtheorem{claim}[equation]{Claim}
\newtheorem*{claim*}{Claim}

\theoremstyle{definition}
\newtheorem{definition}[equation]{Definition}

\newtheorem{remark}[equation]{Remark}

\newtheorem{example}[equation]{Example}
\newtheorem{examples}[equation]{Examples}


\newcommand{\isom}{\cong}                       
\newcommand{\homeq}{\simeq}                     
\newcommand{\smsh}{\wedge}                      
\newcommand{\wdge}{\vee}                        

\newcommand{\homsmsh}{\mathbin{\tilde{\smsh}}}

\newcommand{\Smsh}{\bigwedge}                   
\newcommand{\Wdge}{\bigvee}                     

\newcommand{\Q}{\mathbf{Q}}
\newcommand{\N}{\mathsf{N}}                     
\newcommand{\M}{\mathsf{M}}
\newcommand{\A}{\mathsf{A}}
\newcommand{\B}{\mathsf{B}}
\newcommand{\one}{\mathsf{1}}

\renewcommand{\P}{\mathsf{P}}
\newcommand{\Sig}{\mathsf{\Sigma}}

\newcommand{\E}{\mathsf{E}}
\newcommand{\K}{\mathsf{K}}
\renewcommand{\d}{\mathsf{d}}
\newcommand{\Com}{\mathsf{Com}}


\newcommand{\C}{\mathbf{C}}
\newcommand{\U}{\mathbf{U}}
\newcommand{\R}{\mathbf{R}}
\newcommand{\X}{\mathbf{X}}
\renewcommand{\L}{\mathbf{L}}
\renewcommand{\c}{\mathbf{c}}

\newcommand{\colim}{\operatorname{colim} }
\newcommand{\hocolim}{\operatorname{hocolim} }

\newcommand{\Map}{\operatorname{Map} }
\newcommand{\Nat}{\operatorname{Nat} }

\newcommand{\Hom}{\operatorname{Hom} }
\newcommand{\Ext}{\operatorname{Ext} }

\newcommand{\Mod}{\operatorname{Mod} }
\newcommand{\Comod}{\operatorname{Comod} }
\newcommand{\Coalg}{\operatorname{Coalg} }
\newcommand{\Emb}{\operatorname{Emb} }

\DeclareMathOperator*{\hofib}{hofib}

\DeclareMathOperator*{\thofib}{thofib}
\DeclareMathOperator*{\thocofib}{thocofib}
\DeclareMathOperator*{\tcofib}{tcofib}

\newcommand{\un}[1]{\underline{#1}}

\newcommand{\spaces}{\mathscr{T}op}
\newcommand{\based}{\spaces_*}
\newcommand{\spectra}{{\mathscr{S}p}}              
\newcommand{\finbased}{{\mathscr{T}op^{\mathsf{f}}_*}}


\newcommand{\weq}{\; \tilde{\longrightarrow} \;}      
\newcommand{\epi}{\twoheadrightarrow}           

\newcommand{\der}{\partial}                     

\newcommand{\ord}[1]{#1\textsuperscript{th}}

\newcommand{\fMfld}{\mathsf{fMfld}}

\begin{document}

\title{Manifolds, K-theory and the calculus of functors}
\author{Gregory Arone and Michael Ching}

\date{\today}

\begin{abstract}
The Taylor tower of a functor from based spaces to spectra can be classified according to the action of a certain comonad on the collection of derivatives of the functor. We describe various equivalent conditions under which this action can be lifted to the structure of a module over the Koszul dual of the little $L$-discs operad. In particular, we show that this is the case when the functor is a left Kan extension from a certain category of `pointed framed $L$-manifolds' and pointed framed embeddings. As an application we prove that the Taylor tower of Waldhausen's algebraic K-theory of spaces functor is classified by an action of the Koszul dual of the little $3$-discs operad.
\end{abstract}

\maketitle

In previous work \cite{arone/ching:2014,arone/ching:2014b} we described the structure possessed by the Goodwillie derivatives of a functor from the category of based spaces, $\based$, to the category of spectra, $\spectra$. This structure is that of a symmetric sequence with coaction of a certain comonad $\C$. The comonad $\C$ can be described as the homotopy colimit of a sequence
\[ \C_0 \to \C_1 \to \C_2 \to \dots \]
of comonads where a coaction by $\C_L$ gives a symmetric sequence precisely the structure of a right module over the operad $\K\E_L$ (the Koszul dual of the stable reduced little $L$-discs operad).

It follows that any right $\K\E_L$-module $\M$ gives rise to a $\C$-coalgebra, via the canonical comonad map $\C_L \to \C$, and hence to some functor $F$ whose derivatives are precisely the original $\M$. If this is the case, let us say that \emph{the derivatives of $F$ are a $\K\E_L$-module}. This construction is natural, so that a morphism of $\K\E_L$-modules induces a natural transformation between the corresponding functors.

The above construction suggests several tasks: most notable being to decide which functors $F$, and which natural transformations, arise in this way for a given $L$. One answer to this is given by the following theorem, which is the main result of Section~\ref{sec:KE_L}.

\begin{theorem} \label{thm:intro1}
Let $F: \based \to \spectra$ be a finitary polynomial functor and $L \geq 0$ an integer. Then the following are equivalent:
\begin{enumerate}
  \item the derivatives of $F$ are a $\K\E_L$-module;
  \item there is an $\E_L$-comodule $\N$ such that, for $X \in \based$:
  \[ F(X) \homeq \N \smsh_{\E_L} \Sigma^\infty X^{\smsh *}; \]
  where the symmetric sequence $\Sigma^\infty X^{\smsh *}$ becomes an $\E_L$-module via the diagonal maps on $X$;
  \item $F$ is the left Kan extension, along the inclusion $\Gamma_L \to \based$, of a functor $G: \Gamma_L \to \spectra$, where $\Gamma_L$ is the subcategory of $\based$ described in Definition~\ref{def:Gamma-L} below.
\end{enumerate}
\end{theorem}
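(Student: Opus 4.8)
The plan is to prove the cycle of implications $(3) \Rightarrow (2) \Rightarrow (1) \Rightarrow (3)$, with the bulk of the work going into $(2) \Rightarrow (1)$ and the identification of $\Gamma_L$ that makes $(3)$ tractable. Throughout I would lean on the main results of \cite{arone/ching:2014,arone/ching:2014b}: that the derivatives of a finitary polynomial functor carry a natural $\C$-coalgebra structure, that this structure determines the Taylor tower, and that a $\C_L$-coalgebra is the same thing as a right $\K\E_L$-module via the comonad map $\C_L \to \C$.

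For $(3) \Rightarrow (2)$: the category $\Gamma_L$ (Definition~\ref{def:Gamma-L}) should be a category of ``pointed framed $L$-manifolds,'' built so that its objects are finite disjoint unions of copies of an open $L$-disc together with a disjoint basepoint, and its morphisms are pointed framed embeddings; the key structural fact, which I would establish first, is that mapping spaces in $\Gamma_L$ out of the $n$-fold disc assemble into a symmetric sequence equivalent to (a stable, reduced form of) $\E_L$ itself, and that the inclusion $\Gamma_L \into \based$ records each disc as a contractible based space. Given $G\colon \Gamma_L \to \spectra$, its left Kan extension to $\based$ is computed by a coend $F(X) \homeq \int^{c \in \Gamma_L} G(c) \smsh \Map_{\based}(c_+? , X)$; using that $X$-valued mapping spaces out of $n$ discs are $X^{\smsh n}$ (up to the basepoint bookkeeping), this coend collapses to the bar construction presenting $\N \smsh_{\E_L} \Sigma^\infty X^{\smsh *}$, where $\N$ is the symmetric sequence $c \mapsto G(c)$ regarded as an $\E_L$-comodule via the cooperad structure dual to composition of embeddings. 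The $\E_L$-module structure on $\Sigma^\infty X^{\smsh *}$ coming from the diagonals on $X$ is exactly the one induced by precomposition with embeddings, so the two descriptions match.

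For $(2) \Rightarrow (1)$: starting from $F(X) \homeq \N \smsh_{\E_L} \Sigma^\infty X^{\smsh *}$, I would compute the derivatives of $F$ directly. The symmetric sequence $\{\Sigma^\infty X^{\smsh *}\}$ has derivatives given (after taking cross-effects and differentiating) by the ``coalgebra'' symmetric sequence whose $n$th term involves $(\der_* \suspec\Omega^\infty)$-type data, and crucially its $\C_L$-coalgebra structure is the standard one coming from $\E_L$; relative-smashing over $\E_L$ with $\N$ is an operation that preserves the ambient $\C_L$-coaction because $\E_L$ is precisely the operad whose modules $\C_L$ controls. Hence $\der_* F$ inherits a $\C_L$-coalgebra structure, i.e.\ a right $\K\E_L$-module structure, compatible under $\C_L \to \C$ with its intrinsic $\C$-coalgebra structure; that compatibility is what the phrase ``the derivatives of $F$ are a $\K\E_L$-module'' means. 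This step is where I expect the main obstacle: one must check that the equivalence in $(2)$ is not merely an equivalence of functors but respects enough structure that the $\C_L$-coaction transports, which amounts to a careful comparison of two bar-type models for $\der_* F$ and an appeal to the naturality clause of the $\C$-coalgebra construction in \cite{arone/ching:2014b}.

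For $(1) \Rightarrow (3)$: given that $\der_* F$ is a $\K\E_L$-module $\M$, the construction recalled in the introduction produces a functor $F_\M$ with $\der_* F_\M \homeq \M$ as $\C$-coalgebras, and since $F$ is polynomial (hence determined by its Taylor tower, hence by $\der_* F$ as a $\C$-coalgebra) we get $F \homeq F_\M$. It therefore suffices to show the \emph{universal} functor $F_\M$ attached to a $\K\E_L$-module is a left Kan extension from $\Gamma_L$; but by $(3) \Rightarrow (2) \Rightarrow (1)$ every left Kan extension from $\Gamma_L$ realizes some $\K\E_L$-module, and conversely the assignment $G \mapsto \operatorname{Lan} G$ on $[\Gamma_L,\spectra]$ and the assignment $\M \mapsto F_\M$ should be identified by recognizing $[\Gamma_L,\spectra]$, via the first paragraph's computation of mapping spaces, as equivalent to the category of right $\K\E_L$-modules (equivalently $\E_L$-comodules) with $\operatorname{Lan}$ corresponding to $\N \mapsto F_\N$. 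Chasing this identification closes the loop. The naturality statement for morphisms follows formally, since each of the three constructions is functorial and the equivalences above are natural.
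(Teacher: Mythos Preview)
Your overall architecture (a cycle of implications) is reasonable, but the proposal has a genuine gap at the heart of $(2)\Rightarrow(1)$, and a related confusion in $(3)\Rightarrow(2)$.

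The central issue is that you never explain how an $\E_L$-comodule produces a $\K\E_L$-module. You write that ``relative-smashing over $\E_L$ with $\N$ is an operation that preserves the ambient $\C_L$-coaction because $\E_L$ is precisely the operad whose modules $\C_L$ controls,'' but this conflates $\E_L$ and $\K\E_L$: a $\C_L$-coalgebra is a right $\K\E_L$-module, not an $\E_L$-module, and $\Sigma^\infty X^{\smsh *}$ carries an $\E_L$-module structure (via diagonals), not a $\K\E_L$-module structure. There is no formal reason why smashing an $\E_L$-comodule with an $\E_L$-module over $\E_L$ should yield anything with a $\K\E_L$-action. The paper supplies the missing mechanism: a specific bisymmetric sequence $\B_L$ that is simultaneously an $\E_L$-module in one variable and a $\K\E_L$-module in the other, so that $\Q(\N) := \N \smsh_{\E_L} \tilde{\B}_L$ defines a functor $\Comod(\E_L) \to \Mod(\K\E_L)$. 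Proving this is an equivalence on bounded objects (Proposition~\ref{prop:koszul-EL}) is nontrivial and relies on the fact that each $\E_L(n)$ is a \emph{finite free} $\Sigma_n$-spectrum; for a general operad the analogous Koszul duality adjunction is not an equivalence. Your sketch contains no trace of this finiteness input.

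There is also a smaller error in $(3)\Rightarrow(2)$: the $\E_L$-comodule $\N$ is not the symmetric sequence $c \mapsto G(c)$. The objects of $\Gamma_L$ are indexed by finite sets, but the morphisms include non-surjections (it is the PROP of the non-reduced little discs operad, not of $\E_L$), so $G$ restricted to objects is not an $\E_L$-comodule. One must take cross-effects: the paper builds a Quillen equivalence $\L : \Comod(\E_L) \rightleftarrows [\Gamma_L,\spectra] : \X$ via a second bimodule $\mathbb{B}_L$, and $\N = \X(G)$ is the total homotopy fibre of the cube $K \mapsto G(K_+)$. Your coend formula for the left Kan extension is in the right spirit, but the identification with a bar construction over $\E_L$ requires exactly this cross-effect passage.

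Rather than a cycle of implications, the paper proves the theorem by assembling a single commutative diagram~(\ref{eq:KE-mod-functor-big}) of categories in which $\Comod(\E_L)$ sits in the middle, mapped by equivalences $\Q$ and $\L$ to $\Mod(\K\E_L)$ and $[\Gamma_L,\spectra]$ respectively, and by $-\homsmsh_{\E_L}\Sigma^\infty X^{\smsh *}$ to $[\finbased,\spectra]$; commutativity of the diagram then gives all three equivalences at once. This structural approach isolates the two real ingredients (Koszul duality for $\E_L$, and the Pirashvili-type equivalence for $\Gamma_L$) that your sketch gestures toward but does not supply.
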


In particular, one can view the formula in part (2) as an explicit description of how a polynomial functor can be pieced together from homogeneous pieces. The special case where $L = 0$ tells us that the $\C$-coalgebra structure on the derivatives of $F$ is trivial if and only if $F$ splits as a product of the layers of its Taylor tower. Part (3) of the theorem then tells us that this is the case if and only if $F$ is the left Kan extension of a functor defined on $\Gamma_0$, which, as we shall see, is the category of finite pointed sets and pointed functions that are injective away from the basepoint.

In Section~\ref{sec:fMfld} we use Theorem~\ref{thm:intro1} to give conditions under which the derivatives of a representable functor $\Sigma^\infty \Hom_{\based}(X,-)$ are a $\K\E_L$-module. We show that this is the case when $X$ is a `pointed framed manifold' of dimension $L$. This basically means that $X$ is a finite based cell complex such that removing the basepoint leaves behind a framed manifold. For example, the sphere $S^L$ has this property and so the derivatives of the functor $\Sigma^\infty \Omega^L(-)$ are a $\K\E_L$-module.

Our construction of a $\K\E_L$-module structure on the derivatives of $\Sigma^\infty \Hom_{\based}(X,-)$ is functorial in `pointed framed embeddings' in the variable $X$. This means that if $f: X \to Y$ is a pointed map whose restriction to $f^{-1}(Y - \{y_0\})$ preserves the framing (up to scalar multiples), then the induced map
\[ f^* : \der_*(\Sigma^\infty \Hom_{\based}(Y,-)) \to \der_*(\Sigma^\infty \Hom_{\based}(X,-)) \]
can be realized as a map of $\K\E_L$-modules. We then have the following consequence which extends part (3) of Theorem~\ref{thm:intro1}.

\begin{theorem} \label{thm:intro2}
Let $\fMfld^L_*$ denote the category of pointed framed manifolds and pointed framed embeddings. Let $F: \based \to \spectra$ be a finitary homotopy functor. If $F$ is the homotopy left Kan extension of a pointed functor $G: \fMfld^L_* \to \spectra$ along the forgetful functor $\fMfld^L_* \to \based$, then the derivatives of $F$ are a $\K\E_L$-module. If $F$ is polynomial, the converse of the above statement also holds.
\end{theorem}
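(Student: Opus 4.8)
The plan is to deduce Theorem~\ref{thm:intro2} from Theorem~\ref{thm:intro1} together with the results of Section~\ref{sec:fMfld} on representable functors. The key bridge is the subcategory $\Gamma_L \subseteq \based$ of Definition~\ref{def:Gamma-L}: by part~(3) of Theorem~\ref{thm:intro1}, the derivatives of a finitary polynomial functor $F$ form a $\K\E_L$-module precisely when $F$ is a left Kan extension from $\Gamma_L$. So the whole theorem reduces to comparing left Kan extensions from $\fMfld^L_*$ with left Kan extensions from $\Gamma_L$. The technical heart, supplied by Section~\ref{sec:fMfld}, is that every object $X$ of $\fMfld^L_*$ determines a $\K\E_L$-module structure on $\der_*(\Sigma^\infty \Hom_{\based}(X,-))$, functorially in pointed framed embeddings; this amounts to the statement that the representable functor $\Sigma^\infty \Hom_{\based}(X,-)$ is itself a left Kan extension from $\Gamma_L$, and indeed that there is a functor $\fMfld^L_* \to \Gamma_L$ (or at least a suitably compatible comparison at the level of left Kan extension diagrams) realizing pointed framed embeddings inside $\Gamma_L$.

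First I would make precise the factorization of the forgetful functor $\fMfld^L_* \to \based$ through $\Gamma_L$. A pointed framed $L$-manifold $X$ is a finite based cell complex whose complement of the basepoint is a framed $L$-manifold; the framing data is exactly what is needed to produce the $\E_L$-structure, and Section~\ref{sec:fMfld} encodes this as a lift of $X$ to an object of $\Gamma_L$ together with the observation that a pointed framed embedding $f\colon X \to Y$ induces a morphism in $\Gamma_L$. This gives a functor $\Phi\colon \fMfld^L_* \to \Gamma_L$ commuting with the inclusions into $\based$. Second, given a pointed functor $G\colon \fMfld^L_* \to \spectra$, I would use the standard transitivity of (homotopy) left Kan extensions: the left Kan extension of $G$ along $\fMfld^L_* \to \based$ is the left Kan extension along $\Gamma_L \to \based$ of the left Kan extension of $G$ along $\Phi\colon \fMfld^L_* \to \Gamma_L$. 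Call the latter $G'\colon \Gamma_L \to \spectra$. Thus $F \homeq \operatorname{Lan}_{\Gamma_L \to \based} G'$, and part~(3) of Theorem~\ref{thm:intro1} immediately yields that the derivatives of $F$ are a $\K\E_L$-module. Here I should be careful that $F$ is finitary and that Theorem~\ref{thm:intro1}(3) applies: finitariness of $F$ follows from finitariness of the representables $\Sigma^\infty\Hom_{\based}(X,-)$ for $X$ a finite complex, and if $F$ fails to be polynomial we only claim the forward implication, which does not need the polynomial hypothesis (the $\C$-coalgebra, hence $\K\E_L$-module, structure is produced directly from the Kan extension presentation via the comonad map $\C_L \to \C$, exactly as in the discussion preceding Theorem~\ref{thm:intro1}).

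For the converse, assume $F$ is finitary polynomial and its derivatives form a $\K\E_L$-module. By Theorem~\ref{thm:intro1}(3), $F \homeq \operatorname{Lan}_{\Gamma_L \to \based} G_0$ for some $G_0\colon \Gamma_L \to \spectra$. To finish I must promote this to a left Kan extension from $\fMfld^L_*$. The plan is to show that the comparison functor $\Phi\colon \fMfld^L_* \to \Gamma_L$ is \emph{homotopy final} (cofinal) for the purpose of computing these Kan extensions into $\spectra$, equivalently that restriction along $\Phi$ followed by left Kan extension back along $\Phi$ is the identity on the relevant subcategory of functors — or, more concretely, that every object of $\Gamma_L$ is built, via the bar/simplicial resolution computing the left Kan extension, out of objects coming from $\fMfld^L_*$. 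Then $G_0 \homeq \operatorname{Lan}_\Phi(\Phi^* G_0)$ and, by transitivity of left Kan extensions as above, $F$ is the homotopy left Kan extension of the pointed functor $G := \Phi^* G_0\colon \fMfld^L_* \to \spectra$ along $\fMfld^L_* \to \based$, as required. The main obstacle is precisely this finality/cofinality statement: one must identify a small generating family inside $\fMfld^L_*$ — presumably the framed handle decompositions, so that disjoint unions of framed discs $\coprod D^L$ together with a disjoint basepoint generate $\Gamma_L$ up to the equivalences used in the construction — and verify that the overcategories $\Phi/c$ for $c \in \Gamma_L$ have contractible (or at least appropriately connected) nerves after the relevant group completion. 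I expect this to hinge on the standard fact that framed manifolds are, up to the relevant homotopy equivalences, colimits of their configuration-space/handle data, which is exactly the input that made the $\E_L$-module structure appear in Section~\ref{sec:fMfld} in the first place; so the converse should ultimately be a repackaging of that same input rather than new geometry.
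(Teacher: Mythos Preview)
There is a genuine gap: the functor $\Phi\colon \fMfld^L_* \to \Gamma_L$ that your entire argument rests on does not exist. The objects of $\Gamma_L$ are precisely the finite wedges $\Wdge_I D^L_+$, so a general pointed framed $L$-manifold such as $S^L$ or a solid torus is not an object of $\Gamma_L$, and there is no way to assign one to it compatibly with the inclusion into $\based$. The actual relationship is the reverse inclusion: $\Gamma_L$ is a \emph{full subcategory} of $\fMfld^L_*$ (this is stated explicitly in the paper). Consequently your forward direction collapses, since you cannot write $F$ as a Kan extension from $\Gamma_L$ by transitivity through a nonexistent $\Phi$; and your converse is both unnecessary and misdirected, since the cofinality you attempt to prove concerns a functor that is not there.

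The paper's forward direction is genuinely different from what you sketch. It does not reduce to Theorem~\ref{thm:intro1}(3) via a comparison of indexing categories. Instead it first establishes, for each individual $X \in \fMfld^L_*$, an equivalence of $\C$-coalgebras $\der_*(R_X) \homeq \Map_{\E_L}(\M_X,\B_L)$, where the right-hand side carries a $\K\E_L$-module structure coming from $\B_L$ (this is Proposition~\ref{prop:rep}, and it relies on the configuration-space computation of Lemma~\ref{lem:emb} and the cosheaf argument of Proposition~\ref{prop:MX}). Then, writing the Kan extension $F$ as a coend $G(Y) \smsh_{Y \in \fMfld^L_*} R_Y$ and using that taking derivatives commutes with homotopy colimits of spectrum-valued functors, one obtains $\der_*F$ as a coend of $\K\E_L$-modules, hence itself a $\K\E_L$-module. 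The converse in the paper is a one-line observation: by Theorem~\ref{thm:KE-mod-functor}(3) a polynomial $F$ with $\K\E_L$-module derivatives is a left Kan extension of some $G_0\colon \Gamma_L \to \spectra$; since $\Gamma_L \subset \fMfld^L_*$, left Kan extend $G_0$ to $\fMfld^L_*$ and use transitivity of Kan extensions. No finality statement is needed.
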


Section~\ref{sec:A} contains an application of the previous results, where we show that the derivatives of Waldhausen's algebraic K-theory of spaces functor $A(X)$ are a $\K\E_3$-module.

\subsection*{Acknowledgements}
The second author would like to thank Andrew Blumberg for useful conversations concerning Waldhausen's algebraic K-theory of spaces, and David Ayala for sharing some of his work with John Francis on zero-pointed manifolds. Work for this article was carried out while the second author was a Research Member at the Mathematical Sciences Research Institute in Spring 2014. The second author was supported by the National Science Foundation via grant DMS-1308933.

\section{Background}

Let $\finbased$ be the category of finite based CW-complexes and $\spectra$ the category of $S$-modules of EKMM \cite{elmendorf/kriz/mandell/may:1997}. We let $[\finbased,\spectra]$ denote the category of pointed, simplicially-enriched functors $F: \finbased \to \spectra$. Recall that a functor $F$ is \emph{$n$-excisive} if it takes strongly homotopy cocartesian $(n+1)$-cubes to homotopy cartesian cubes, and we say that $F$ is \emph{polynomial} if it is $n$-excisive for some integer $n$.

Each functor $F \in [\finbased,\spectra]$ has a \emph{Taylor tower}
\[ F \to \dots \to P_nF \to P_{n-1}F \to \dots P_1F \to P_0F = * \]
where $P_nF \in [\finbased,\spectra]$ is $n$-excisive and the natural transformation $F \to P_nF$ is initial, up to homotopy, among maps from $F$ to an $n$-excisive functor.

The \emph{layers} of the Taylor tower are the functors
\[ D_nF := \hofib(P_nF \to P_{n-1}F) \]
and these can be expressed as
\[ D_nF(X) \homeq (\der_nF \smsh (\Sigma^\infty X)^{\smsh n})_{h\Sigma_n} \]
where $\der_nF$ is a spectrum with (naive) $\Sigma_n$-action called the \emph{\ord{$n$} derivative} of $F$.

In \cite{arone/ching:2014b} we described three related classifications of polynomial functors from based spaces to spectra: in terms of
\begin{enumerate}
  \item coalgebras over a comonad $\C$ on symmetric sequences;
  \item comodules over the commutative operad $\Com$;
  \item Kan extensions from the category of pointed finite sets and pointed functions.
\end{enumerate}
We recall the details of each of these approaches.

Let $\mathbb{E}_L$ be the standard little $L$-discs operad in based spaces, where $\mathbb{E}_L(n)$ is the space of `standard' embeddings $f: \coprod_n D^L \to D^L$, with a disjoint basepoint, where operad composition is given by composition of embeddings. Let $\E_L$ denote the stable reduced version of $\mathbb{E}_L$ given by
\[ \E_L(n) := \begin{cases}
  \Sigma^\infty \mathbb{E}_L(n) & \text{if $n > 0$}; \\
  \quad * & \text{if $n = 0$}.
\end{cases} \]
Let $\K\E_L$ be (a cofibrant model for) the \emph{Koszul dual operad} of $\E_L$ as in \cite{ching:2012}.

Associated to the operad $\K\E_L$ is a comonad $\C_L$, on the category of symmetric sequences (in $\spectra$), given by
\[ \C_L(\A)(k) := \prod_{n} \left[ \prod_{\un{n} \epi \un{k}} \Map(\K\E_L(n_1) \smsh \dots \smsh \K\E_L(n_k), \A(n)) \right]^{\Sigma_n}. \]
The comonad structure is determined by the operad structure on $\K\E_L$ and a $\C_L$-coalgebra is precisely a right $\K\E_L$-module.

The standard inclusions $D^0 \subseteq D^1 \subseteq D^2 \subseteq \dots$ determine a sequence of operads
\[ \E_0 \to \E_1 \to \E_2 \to \cdots \]
which, in turn, induces a dual sequence of operads
\[ \K\E_0 \leftarrow \K\E_1 \leftarrow \K\E_2 \leftarrow \cdots \]
and a sequence of comonads
\[ \C_0 \to \C_1 \to \C_2 \to \cdots . \]
Let $\C$ be the objectwise homotopy colimit of this sequence. Then $\C$ inherits a comonad structure. In \cite{arone/ching:2014b} we showed that the derivatives $\der_*F$ of a functor $F: \finbased \to \spectra$ have a coaction by $\C$ and that $\der_*$ sets up an equivalence between the homotopy theory of polynomial functors $F$ and that of bounded $\C$-coalgebras.

The second classification of polynomial functors $\based \to \spectra$ (due to Dwyer and Rezk) is via functors $\mathsf{\Omega} \to \spectra$ where $\mathsf{\Omega}$ is the category of nonempty finite sets and surjections. We think of such functors as comodules over the commutative operad $\Com$ in the category of spectra, given by $\Com(n) = S$ for all $n \geq 1$.

For any operad $\P$ of spectra, a \emph{$\P$-comodule} is a symmetric sequence $\N$ together with a structure map
\[ \N(n) \smsh \P(n_1) \smsh \dots \smsh \P(n_k) \to \N(k) \]
for each surjection $\un{n} \epi \un{k}$. Here $\un{n}$ denotes the finite set $\{1,\dots,n\}$ and, for a surjection $\alpha: \un{n} \epi \un{k}$, we write $n_i := |\alpha^{-1}(i)|$. A $\Com$-comodule is the same as a functor $\mathsf{\Omega} \to \spectra$.

Associated to any functor $F: \finbased \to \spectra$ is a $\Com$-comodule $\N[F]$ given by
\[ \N[F](k) := \Nat_{X \in \finbased}(\Sigma^\infty X^{\smsh *}, F(X)) \]
with $\Com$-comodule structure arising from the $\Com$-module structure on $\Sigma^\infty X^{\smsh *}$ that is induced by the diagonal on the based space $X$.

Dwyer and Rezk showed that a polynomial $F$ can be recovered from the $\Com$-comodule $\N[F]$ via an equivalence
\[ \N[F] \homsmsh_{\Com} \Sigma^\infty X^{\smsh *} \weq F(X). \]
Here $\homsmsh_{\Com}$ denotes the homotopy coend of $\Com$-comodule and a $\Com$-module. This construction sets up an equivalence between the homotopy theory of polynomial functors $\finbased \to \spectra$ and that of bounded $\Com$-comodules. (Dwyer and Rezk's work is not published; an account of this can be found in \cite{arone/ching:2014b}).

For the third classification of polynomial functors from based spaces to spectra, let $\Gamma$ denote the category of finite pointed sets and basepoint-preserving functions. Any polynomial functor $F: \finbased \to \spectra$ is equivalent to the homotopy left Kan extension of its restriction to the subcategory $\Gamma \subseteq \finbased$. More precisely, this sets up an equivalence, for each $n$, between the homotopy theory of $n$-excisive functors $\finbased \to \spectra$, and that of pointed functors $\Gamma^{\leq n} \to \spectra$ where $\Gamma^{\leq n}$ is the full subcategory of $\Gamma$ whose objects are the finite pointed sets of cardinality (excluding the basepoint) at most $n$.

We can summarize these classifications and their relationships in the following diagram which commutes up to natural equivalence, and in which each functor induces an equivalence of homotopy theories:
\begin{equation} \label{eq:classification} \begin{diagram} \dgARROWLENGTH=4em
  \node[2]{\Coalg_{\mathsf{b}}(\C)} \\
  \node{\Comod_{\mathsf{b}}(\Com)} \arrow{ne,t}{\hocolim_L - \homsmsh_{\E_L} \B_{L}} \arrow[2]{e,t}{- \homsmsh_{\Com} \Sigma^\infty X^{\smsh *}} \arrow{se,b}{\L}
    \node[2]{[\finbased,\spectra]_{\mathsf{poly}}} \arrow{nw,t}{\der_*} \\
  \node[2]{[\Gamma,\spectra]_{\mathsf{b}}}  \arrow{ne,b}{\mathsf{hLKan}}
\end{diagram} \end{equation}
Here $[\Gamma,\spectra]$ denotes the category of pointed functors $\Gamma \to \spectra$, and $[\Gamma,\spectra]_{\mathsf{b}}$ is the subcategory of those functors (and natural transformations) that are left Kan extensions along $\Gamma^{\leq n} \subseteq \Gamma$ for some $n$.  and the subscripts `$\leq n$' denote the subcategories of $n$-truncated objects. The subscripts $\mathsf{b}$ on the left and top objects denote the categories of bounded $\Com$-comodules and bounded $\C$-coalgebras respectively. The arrow marked $\mathsf{hLKan}$ is homotopy left Kan extension along the inclusion $\Gamma \subseteq \finbased$.

We have introduced the three arrows $\der_*$, $- \homsmsh_{\Com} \Sigma^\infty X^{\smsh *}$ and $\mathsf{hLKan}$ in (\ref{eq:classification}), so now let us describe the other two functors in this picture.

First note that a $\Com$-comodule $\N$ inherits the structure of a $\E_L$-comodule for any $L$ via pullback along the operad map $\E_L \to \Com$. In \cite{arone/ching:2014b} we introduced a certain bisymmetric sequence $\B_{L}$ with an $\E_L$-module action on one variable and a $\K\E_L$-module action on the other. Forming the homotopy coend of an $\E_L$-comodule $\N$ with this $\E_L$-module results in a $\K\E_L$-module which we think of as a certain `Koszul dual' of $\N$. When $\N$ is a $\Com$-comodule, there is an induced sequence of homotopy coends
\[ \N \homsmsh_{\E_0} \B_{\E_0} \to \N \homsmsh_{\E_1} \B_{\E_1} \to \dots \]
whose homotopy colimit is a $\C$-coalgebra. This construction gives the top-left hand functor in (\ref{eq:classification}).

We think of the bottom-left functor in (\ref{eq:classification}) as part of a `Pirashvili'-type equivalence between the category of functors $\mathsf{\Omega} \to \spectra$ and the category of pointed functors $\Gamma \to \spectra$ where $\Gamma$ is the category of all finite pointed sets and pointed functions. Explicitly, this functor $\L$ is given by
\[ \L(\N)(J_+) = \N \homsmsh_{\Com} \Sigma^\infty (J_+)^{\smsh *}, \]
which is just the restriction of the middle horizontal functor in (\ref{eq:classification}) to the subcategory $\Gamma^{\leq n}$ of $\finbased$.

\section{Functors whose derivatives are a module over the Koszul dual of the little discs operad} \label{sec:KE_L}

We now turn to a description of those functors $F: \finbased \to \spectra$ for which the $\C$-coalgebra structure on $\der_*F$ arises from a $\K\E_L$-module structure for some given $L$. We have three characterizations that correspond to the three classifications of polynomial functors in (\ref{eq:classification}). Our main result is as follows.

\begin{theorem} \label{thm:KE-mod-functor}
Let $F: \finbased \to \spectra$ be a polynomial functor and $L \geq 0$ an integer. Then the following are equivalent:
\begin{enumerate}
  \item the derivatives $\der_*F$ have a $\K\E_L$-module structure, from which $F$ can then be reconstructed by
  \[ F(X) \homeq \widetilde{\Map}_{\C}(\der_*R_X,\U(\der_*F)) \]
  where $\U$ denotes the forgetful functor from $\K\E_L$-modules to $\C$-coalgebras, and $\widetilde{\Map}_{\C}(-,-)$ is the derived mapping spectrum for two $\C$-coalgebras;
  \item there is a bounded $\E_L$-comodule $\N$ and a natural equivalence
  \[ F(X) \homeq \N \homsmsh_{\E_L} \Sigma^\infty X^{\smsh *}; \]
  \item there is a simplicially-enriched functor $G: \Gamma_L \to \spectra$ and an equivalence
  \[ \mathsf{hLKan}(G) \homeq F \]
  where $\Gamma_L$ is the subcategory of $\finbased$ introduced in Definition~\ref{def:Gamma-L} below, and $\mathsf{hLKan}$ denotes the enriched homotopy left Kan extension along the inclusion $\Gamma_L \to \finbased$.
\end{enumerate}
\end{theorem}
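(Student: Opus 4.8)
The plan is to establish the cycle of implications $(2) \Rightarrow (1) \Rightarrow (3) \Rightarrow (2)$, leaning on the equivalence of homotopy theories summarized in diagram~\eqref{eq:classification} and on the fact that the various functors there are compatible with the comonad maps $\C_L \to \C$. The conceptual point is that each of the three conditions picks out, inside one of the three equivalent categories of polynomial functors, the image of the corresponding ``$L$-th'' refinement: $\K\E_L$-modules inside $\C$-coalgebras, $\E_L$-comodules inside $\Com$-comodules, and functors out of $\Gamma_L$ inside functors out of $\Gamma$. So the heart of the argument is to show that these three images match up under the equivalences of \eqref{eq:classification}.

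First I would treat $(2) \Rightarrow (1)$. Given a bounded $\E_L$-comodule $\N$, form the $\K\E_L$-module $\N \homsmsh_{\E_L} \B_L$; by the construction recalled before \eqref{eq:classification}, applying $\U$ and then $\hocolim_L$ recovers (up to equivalence) the $\C$-coalgebra $\der_* F$ when $F(X) \homeq \N \homsmsh_{\E_L} \Sigma^\infty X^{\smsh *}$. The key compatibility to verify is that the top-left functor $\hocolim_L (- \homsmsh_{\E_L} \B_L)$ of \eqref{eq:classification} factors, for a comodule that happens to be an $\E_L$-comodule, through $\U$ applied to $\N \homsmsh_{\E_L} \B_L$ — essentially because $\B_{L'}$ for $L' \geq L$ is built from $\B_L$ along the operad maps, so the homotopy colimit defining $\C$ stabilizes at stage $L$ on this piece. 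Once $\der_* F$ is known to carry a $\K\E_L$-module structure lifting its $\C$-coalgebra structure, the reconstruction formula $F(X) \homeq \widetilde{\Map}_{\C}(\der_* R_X, \U(\der_* F))$ is just the statement that $F$ is determined by its derivatives as a $\C$-coalgebra, which is part of the classification of \cite{arone/ching:2014b} (here $R_X = \Sigma^\infty \Hom_{\based}(X,-)$, whose derivatives corepresent evaluation).

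Next, $(1) \Rightarrow (3)$: a $\K\E_L$-module structure on $\der_* F$ means $\der_* F$ lies in the image of $\U$, hence — transporting along the equivalences of \eqref{eq:classification} — $F$ is equivalent to $\hocolim_L(\M \homsmsh_{\E_L} \Sigma^\infty X^{\smsh *})$ for the underlying comodule $\M$, which I would identify with a left Kan extension from $\Gamma_L$ by defining $G$ to be the appropriate restriction (the $\Gamma_L$-analogue of the functor $\L$ of \eqref{eq:classification}) and checking that the enriched left Kan extension along $\Gamma_L \to \finbased$ reproduces the coend formula. Finally $(3) \Rightarrow (2)$ runs this identification backwards: the homotopy left Kan extension of $G \colon \Gamma_L \to \spectra$ is computed by a coend which, upon comparing $\Gamma_L$-diagrams with $\E_L$-comodules (the ``Pirashvili'' comparison, now relative to the little $L$-discs operad rather than $\Com$), takes the form $\N \homsmsh_{\E_L} \Sigma^\infty X^{\smsh *}$ with $\N$ bounded because $G$ is supported on the truncations of $\Gamma_L$.

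The main obstacle, I expect, is the precise comparison between $\E_L$-comodules and simplicially-enriched functors on $\Gamma_L$ — i.e.\ setting up the $L$-parametrized analogue of the Dwyer–Rezk/Pirashvili dictionary and checking that left Kan extension along $\Gamma_L \to \finbased$ is computed by the coend $- \homsmsh_{\E_L} \Sigma^\infty X^{\smsh *}$. This requires having Definition~\ref{def:Gamma-L} in hand and understanding the mapping spaces of $\Gamma_L$ well enough to see the little $L$-discs operad emerge from the diagonal maps; the boundedness bookkeeping (matching $n$-excisive functors with $n$-truncated comodules with $G$ supported on objects of size $\leq n$) is routine but must be threaded consistently through all three equivalences. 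Everything else is a matter of assembling compatibilities already present in \eqref{eq:classification} and in \cite{arone/ching:2014b}.
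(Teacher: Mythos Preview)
Your overall strategy is right and matches the paper's: the three conditions are the images of three ``level-$L$'' refinements under the equivalences of~\eqref{eq:classification}, and the task is to show these images line up. The paper organizes this by building diagram~(\ref{eq:KE-mod-functor}) (an $L$-level analogue of~\eqref{eq:classification}), proving that the two left-hand functors $\Q:\Comod(\E_L)\to\Mod(\K\E_L)$ and $\L:\Comod(\E_L)\to[\Gamma_L,\spectra]$ are equivalences on bounded objects, and checking the diagram commutes. Your cycle $(2)\Rightarrow(1)\Rightarrow(3)\Rightarrow(2)$ amounts to the same thing.

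However, there is a genuine gap in your $(1)\Rightarrow(3)$ step. You write that a $\K\E_L$-module structure on $\der_*F$ lets you transport to ``the underlying comodule $\M$,'' but producing an $\E_L$-comodule from a $\K\E_L$-module is exactly the nontrivial direction of Koszul duality: one must show that the functor $\Q(\N)=\N\homsmsh_{\E_L}\B_L$ is an equivalence from bounded $\E_L$-comodules to bounded $\K\E_L$-modules. The paper isolates this as a separate proposition with a substantial proof (reducing via truncation filtrations to trivial comodules and cofree modules, and invoking \cite[3.67]{arone/ching:2014b}). Crucially, this step depends on the fact that each $\E_L(n)$ is a finite free $\Sigma_n$-spectrum; the analogous statement fails for a general operad $\P$, so it cannot be absorbed into the formal compatibilities of~\eqref{eq:classification} as your sketch suggests.

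So you have correctly identified the Pirashvili-type comparison $\Comod(\E_L)\simeq[\Gamma_L,\spectra]$ as one of the two main ingredients, but you have underweighted the other: the Koszul duality equivalence is of comparable difficulty and is what makes condition~(1) genuinely equivalent to (2) and (3) rather than merely implied by them. Your $(2)\Rightarrow(1)$ direction is fine as written (it only needs the forward functor $\Q$ and the commutativity of the top triangle), but the return trip requires the full equivalence.
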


The category $\Gamma_L$ is defined as follows.

\begin{definition} \label{def:Gamma-L}
For an integer $L \geq 0$, let $D^L_+$ denote the standard closed unit disc in $\mathbb{R}^L$ with a disjoint basepoint. Then we denote by $\Gamma_L$ the (non-full) subcategory of $\finbased$ whose objects are the based spaces of the form
\[ \Wdge_{I} D^L_+ \]
for a finite set $I$, and whose morphisms are the continuous basepoint-preserving functions
\[ f: \Wdge_{I} D^L_+ \to \Wdge_{J} D^L_+ \]
with the following properties:
\begin{itemize}
  \item for each $i \in I$, the restriction $f_i$ of $f$ to the corresponding copy of $D^L$ is: \emph{either} a standard affine embedding (i.e. of the form $x \mapsto ax+b$ for some $a > 0$ and $b \in D^L$) into one of the copies of $D^L$ in the target; \emph{or} the constant map to the basepoint;
  \item the images of the non-constant maps $f_i$ are disjoint.
\end{itemize}
For convenience, we often write $I_+$ for the object of $\Gamma_L$ given by the based space $\Wdge_{I} D^L_+$. The category $\Gamma_L$ is topologically-enriched, with mapping spaces inherited from those of $\based$. These mapping spaces can be written
\begin{equation} \label{eq:Gamma_L} \Gamma_L(I_+,J_+) = \Wdge_{\alpha: I_+ \to J_+} \Smsh_{j \in J} \mathbb{E}_L(I_j) \end{equation}
where the coproduct is taken over the set of pointed maps $\alpha: I_+ \to J_+$, where $I_j := \alpha^{-1}(j)$, and where $\mathbb{E}_L$ denotes the \emph{non-reduced} little $L$-discs operad in based spaces (with $\mathbb{E}_L(0) = S^0$). This description says also that $\Gamma_L$ is the PROP associated to the operad $\mathbb{E}_L$.

We write $[\Gamma_L,\spectra]$ for the category of pointed topologically-enriched functors from $\Gamma_L$ to $\spectra$.
\end{definition}

We prove Theorem~\ref{thm:KE-mod-functor} by constructing a diagram of categories and functors of the following form:
\begin{equation} \label{eq:KE-mod-functor} \begin{diagram} \dgARROWLENGTH=4em
  \node[2]{\Mod(\K\E_L)} \arrow{se,t}{\Map_{\C}(\der_*(R_X),\U(-))} \\
  \node{\Comod(\E_L)} \arrow{ne,t}{\Q} \arrow[2]{e,t}{- \homsmsh_{\E_L} \Sigma^\infty X^{\smsh *}} \arrow{se,b}{\L}
    \node[2]{[\finbased,\spectra]} \\
  \node[2]{[\Gamma_L,\spectra]}  \arrow{ne,b}{\mathsf{hLKan}}
\end{diagram} \end{equation}
This diagram should be compared to (\ref{eq:classification}) to which it is closely related.

The three functors whose target is $[\finbased,\spectra]$ correspond to the three ways to construct a functor $F: \finbased \to \spectra$ described in the statement of Theorem~\ref{thm:KE-mod-functor}. The required result follows by establishing that the other two functors in this diagram induce equivalences of homotopy categories on bounded objects, and that the whole diagram commutes up to natural equivalence.

\subsection*{Koszul duality between bounded $\E_L$-comodules and bounded $\K\E_L$-modules}

First note that we can conveniently describe modules and comodules over an operad $\P$ in terms of functors defined on the PROP associated to $\P$. If $\P$ is an operad of spectra, we have a $\spectra$-enriched category $\un{\P}$ with objects the nonempty finite sets, and with mapping spectra given by
\begin{equation} \label{eq:PROP} \un{\P}(I,J) := \Wdge_{I \epi J} \Smsh_{j \in J} \P(I_j). \end{equation}
Here we take the coproduct over all surjections $\alpha: I \epi J$, and, for $j \in J$, write $I_j := \alpha^{-1}(j)$ with $\alpha$ understood. The composition and unit maps for the category $\un{\P}$ come directly from the composition and unit for the operad $\P$.

A (right) $\P$-module can now be identified with a $\spectra$-enriched functor $\un{\P}^{op} \to \spectra$, and a $\P$-comodule with a $\spectra$-enriched functor $\un{\P} \to \spectra$. We freely use these identifications throughout this paper.

One further note: when the operad $\P$ arises by taking the suspension spectrum of an operad $\mathbb{P}$ of unbased spaces, i.e. by $\P(n) = \Sigma^\infty \mathbb{P}(n)_+$, we can, similarly, form a $\spaces$-enriched category $\un{\mathbb{P}}$ (defined in the same way as $\un{\P}$. A $\P$-module can then be identified with a $\spaces$-enriched functor $\un{\mathbb{P}} \to \spectra$.

We now describe the top-left functor in (\ref{eq:KE-mod-functor}) and show that it induces an equivalence of homotopy theories. More precisely, we construct a Quillen adjunction (where both sides have the projective model structure)
\[ \Q : \Comod(\E_L) \rightleftarrows \Mod(\K\E_L) : \R \]
and show that it restricts to an equivalence on the subcategories of bounded objects. This adjunction is a form of Koszul duality and is constructed via the methods of \cite{arone/ching:2014b} which we now recall.

The key construction, made in \cite[Def.~3.38 and after]{arone/ching:2014b} is of a certain bisymmetric sequence $\B_{L}$, associated to the operad $\E_L$, that has an $\E_L$-module structure on its first variable, and a $\K\E_L$-module structure on its second variable. Each of these structures, when taken individually, is equivalent to a trivial structure, but when considered together they are not. Explicitly we have
\begin{equation} \label{eq:BL} \B_L(I,J) := \prod_{I \epi J} \Smsh_{j \in J} \B(\one,\E_L,\E_L)(I_j) \end{equation}
where the product is taken over all surjections between the finite sets $I$ and $J$, and for such a surjection $\alpha$ and $j \in J$ we use $I_j$ to denote $\alpha^{-1}(j)$.

We can think of $\B_L$ as an $\E_L$-module in the category of $\K\E_L$-modules, in other words as a $\spectra$-enriched functor $\un{\E}_L^{op} \to \Mod(\K\E_L)$, or equivalently, a $\spaces$-enriched functor
\[ \B_L: \un{\mathbb{E}}_L^{op} \to \Mod(\K\E_L). \]
The category $\Mod(\K\E_L)$ has a projective model structure that is compatible with its enrichment in $\spaces$, so has a topologically-enriched cofibrant replacement functor $\c: \Mod(\K\E_L) \to \Mod(\K\E_L)$. Write $\tilde{\B}_L$ for the composite
\[ \un{\mathbb{E}}_L^{op} \arrow{e,t}{\B_L} \Mod(\K\E_L) \arrow{e,t}{\c} \Mod(\K\E_L). \]
In other words, $\tilde{\B}_L$ is a replacement for $\B_L$ (as an $\E_L$-module in the category of $\K\E_L$-modules) and has the property that, for each finite set $I$, the $\K\E_L$-module $\tilde{\B}_L(I,-)$ is cofibrant in the projective model structure.

We now define our left adjoint
\[ \Q: \Comod(\E_L) \to \Mod(\K\E_L) \]
by
\[ \Q(\N) := \N \smsh_{\E_L} \tilde{\B}_{L}. \]
This is an enriched coend formed over the category $\un{\E}_L$ and it inherits a $\K\E_L$-module structure from that on $\tilde{\B}_L$. The functor $\Q$ has right adjoint
\[ \R: \Mod(\K\E_L) \to \Comod(\E_L) \]
given by
\[ \R(\M) := \Map_{\K\E_L}(\tilde{\B}_{L},\M) \]
with $\E_L$-comodule structure arising from the $\E_L$-module structure on $\tilde{\B}_{L}$. The right adjoint preserves fibrations and trivial fibrations since each $\tilde{\B}_{L}(I,-)$ is a cofibrant $\K\E_L$-module, so we have a Quillen adjunction.

Note that the left adjoint here is a model for the `derived indecomposables' of an $\E_L$-comodule, and the right adjoint can be viewed as a model for the `derived primitives' of a $\K\E_L$-module.

\begin{proposition} \label{prop:koszul-EL}
The Quillen adjunction
\[ \Q : \Comod(\E_L) \rightleftarrows \Mod(\K\E_L) : \R \]
restricts to an equivalence between the homotopy categories of bounded $\E_L$-comodules and bounded $\K\E_L$-modules.
\end{proposition}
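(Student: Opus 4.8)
The plan is to reduce the statement to a filtration argument over the `bounded' (i.e.\ $n$-truncated) subcategories, and within each filtration quotient to an essentially homogeneous computation where the adjunction $(\Q,\R)$ becomes a zig-zag of Quillen equivalences we already control. First I would recall the analogous statement from \cite{arone/ching:2014b}: the bisymmetric sequence $\B(\one,\E_L,\E_L)$ is, in each variable separately, equivalent to the trivial symmetric sequence concentrated in arity $1$ with value $\sphere$; consequently $\B_L(I,J)$, for fixed $I$ or fixed $J$, is equivalent to the appropriate (co)free object. The functor $\Q$ should therefore send a \emph{trivial} (i.e.\ induced-from-arity-$n$) $\E_L$-comodule to a trivial $\K\E_L$-module of the same arity, and dually for $\R$; this is the base case of the induction and follows from the defining property of $\B_L$ together with the fact that $\smsh_{\E_L}$ of a trivial comodule with $\B_L$ collapses the coend to a single orbit. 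The key point is that the unit $\N \to \R\Q(\N)$ and counit $\Q\R(\M)\to\M$ are equivalences on these `atomic' pieces.

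Next I would set up the induction on the truncation degree $n$. Both $\Comod(\E_L)_{\le n}$ and $\Mod(\K\E_L)_{\le n}$ carry the Taylor-type filtration by arity, and the adjunction $(\Q,\R)$ is compatible with the truncation functors (since $\tilde\B_L(I,-)$ is concentrated in arities $\ge |I|$ on the $\K\E_L$ side, and the coend over $\un\E_L$ respects the arity filtration on the $\E_L$ side). The inductive step is then: assuming $(\Q,\R)$ is an equivalence on bounded objects of arity $<n$, one analyzes the fiber sequences relating an arity-$\le n$ object to its arity-$\le n-1$ truncation and its arity-$n$ `layer'. On the layer, the problem becomes the base case above (a trivial (co)module in a single arity), so the unit/counit is an equivalence there; by the inductive hypothesis it is an equivalence on the truncation; and since both $\Q$ and $\R$ preserve the relevant homotopy (co)fiber sequences — $\Q$ because it is a left Quillen functor and these are homotopy cofiber sequences of cofibrant objects, $\R$ because $\tilde\B_L(I,-)$ is cofibrant so $\R$ is homotopy-meaningful and, being a corepresentable-type functor, preserves the relevant homotopy limits — the five lemma (in the triangulated/stable homotopy category of each side) gives that the unit and counit are equivalences on all bounded objects.

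I expect the main obstacle to be verifying that $\R$ interacts correctly with the arity filtration and the associated homotopy fiber sequences: $\R(\M) = \Map_{\K\E_L}(\tilde\B_L,\M)$ is a mapping-object construction, so one must check that it takes the (homotopy) fiber sequence of $\K\E_L$-modules $D_n\M \to \M_{\le n} \to \M_{\le n-1}$ to a fiber sequence of $\E_L$-comodules, and that $\R(D_n\M)$ is again trivial of arity $n$ with the `expected' value — this is where the precise cofibrancy of $\tilde\B_L$ and the structure of $\B(\one,\E_L,\E_L)$ both get used, and where a naive argument could break because mapping objects do not in general commute with colimits. A secondary point to be careful about is that `bounded' means Kan-extended from a truncation, not merely arity-bounded in the naive sense, so one should phrase the filtration argument in terms of the truncation functors $\tau_{\le n}$ and check $\Q$ and $\R$ commute with these up to natural equivalence; once that compatibility is in place, the induction runs as above and the proposition follows from the $L$-indexed analogue of the corresponding result in \cite{arone/ching:2014b}, which is the $\hocolim_L$ of the present statement.
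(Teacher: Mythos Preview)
Your overall strategy---reduce via the arity filtration to a single-layer base case and use that $\Q,\R$ preserve the relevant (co)fibre sequences---matches the paper's. The genuine gap is in your base case. You assert that $\Q$ sends a trivial $\E_L$-comodule concentrated in arity $n$ to a \emph{trivial} $\K\E_L$-module of the same arity. This is false: the key input from \cite[3.67]{arone/ching:2014b} is that for bounded trivial $\N$ one has
\[ \Q(\N) = \N \smsh_{\E_L} \tilde{\B}_L \;\simeq\; \C_L(\N), \]
the \emph{cofree} $\K\E_L$-module on $\N$, which for $\N$ concentrated in arity $n$ has nontrivial terms in every arity $\leq n$. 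It is precisely this identification that makes the unit an equivalence on the base case, via the forgetful/cofree adjunction: $\R\Q(\N)\simeq\Map_{\K\E_L}(\tilde{\B}_L,\C_L(\N))\cong\Map_{\Sigma}(\tilde{\B}_L,\N)\simeq\N$, using that $\tilde{\B}_L\simeq\un{\one}$ as a symmetric sequence.

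This error propagates to your counit argument. You propose reducing to the trivial layer $D_n\M$ on the module side, but $\R$ of a trivial module is not easily computed (and is certainly not trivial). The paper instead filters an $n$-truncated $\M$ by a fibre sequence $\M'\to\M\to\C_L(\M(n))$ whose top piece is \emph{cofree}, not trivial; the counit is then checked directly on cofree modules, again using the forgetful/cofree adjunction together with $\Q(\text{trivial})\simeq\text{cofree}$. The asymmetry---trivial layers for comodules, cofree layers for modules---is the heart of the argument, and your proposal collapses it.
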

\begin{proof}
We first show that $\Q$ and $\R$ preserve boundedness (up to equivalence). Suppose $\N$ is an $n$-truncated $\E_L$-comodule (i.e. $\N(k) = *$ for $k > n$). Then $\Q(\N)$ is equivalent, as a symmetric sequence, to a derived coend
\[ \N \homsmsh_{\E_L} \un{\one} \]
whose \ord{$k$} term can be written as the geometric realization of a simplicial object with $r$-simplices
\[ \Wdge_{n_0 \geq \dots n_r \geq k} \N(n_0) \smsh_{\Sigma_{n_0}} \un{\E}_L(n_0,n_1) \smsh_{\Sigma_{n_1}} \dots \smsh_{\Sigma_{n_{r-1}}} \un{\E_L}(n_{r-1},n_r) \smsh_{\Sigma_{n_r}} \un{\one}(n_r,k). \]
Here $\un{\one}$ is the `trivial' bisymmetric sequence with $\un{\one}(I,J) = \Sigma^\infty \mathsf{Bij}(I,J)_+$, where $\mathsf{Bij}(I,J)$ is the set of bijections from $I$ to $J$. This \ord{$k$} term is trivial if $k > n$ and so $\Q(\N)$ is also $n$-truncated.

Similarly, if $\M$ is an $n$-truncated $\K\E_L$-module, then $\R(\M)$ is equivalent to the derived end
\[ \widetilde{\Map}_{\K\E_L}(\un{\one},\M) \]
which is, by a similar calculation, also $n$-truncated. It follows that $\Q$ and $\R$ determine an adjunction between the homotopy categories of bounded $\E_L$-comodules and bounded $\K\E_L$-modules.

We now prove that this adjunction is an equivalence by showing that the derived unit and counit of the adjunction are equivalences when applied to bounded objects. First take a bounded $\E_L$-comodule $\N$. Our goal is to show that the derived unit $\eta: \N \to \R\Q\N$ is an equivalence. To see this let $\N_{\leq n}$ denote the \emph{$n$-truncation} of $\N$, defined by
\[ \N_{\leq n}(k) := \begin{cases}
  \N(k) & \text{if $k \leq n$}; \\
  * & \text{if $k > n$}.
\end{cases} \]
There are natural fibre sequences of $\E_L$-comodules
\[ \N_{\leq (n-1)} \to \N_{\leq n} \to \N_{=n}, \]
where $\N_{=n}$ has only its \ord{$n$} non-trivial, and equal to $\N(n)$. Since both $\Q$ and $\R$ preserve fibre sequences (which are the same as cofibre sequences in the stable categories $\Comod(\E_L)$ and $\Mod(\K\E_L)$), we can use these sequences to reduce to the case that $\N$ is concentrated in a single term, and, in particular, has a trivial $\E_L$-comodule structure.

Proposition~3.67 of \cite{arone/ching:2014b} tells us that when $\N$ is a bounded trivial $\E_L$-comodule, there is an equivalence of $\K\E_L$-modules
\begin{equation} \label{eq:indec-triv} \N \smsh_{\E_L} \tilde{\B}_{L} \homeq \C_{L}(\N) \end{equation}
where the right-hand side can be thought of as the `cofree' $\K\E_L$-module generated by $\N$.

Now consider the following diagram (of symmetric sequences):
\begin{equation} \label{eq:koszul-unit} \begin{diagram}
  \node{\N} \arrow{e,t}{\eta} \arrow{s,r}{\sim} \node{\Map_{\K\E_L}(\tilde{\B}_{L}, \N \smsh_{\E_L} \tilde{\B}_{L})} \arrow{s,r}{\sim} \\
  \node{\Map_{\Sig}(\tilde{\B}_{L},\N)} \arrow{e,b}{\isom} \node{\Map_{\K\E_L}(\tilde{\B}_{L}, \C_{L}(\N))}
\end{diagram} \end{equation}
where the top map $\eta$ is the derived unit of the adjunction, the right-hand map is the equivalence of (\ref{eq:indec-triv}), the bottom isomorphism is the adjunction between the forgetful and cofree functors between symmetric sequences and $\K\E_L$-modules, and the left-hand map is adjoint to the composite
\[ \N \smsh_{\E_L} \tilde{\B}_{L} \weq \C_L(\N) \arrow{e,t}{\varepsilon} \N \]
where $\varepsilon$ is the counit map for the comonad $\C_L$. Since $\tilde{\B}_{L}$ is equivalent to $\one$ (as an $\E_L$-module in its first variable), this adjoint map is an equivalence. It is easy to check that the above diagram commutes and it follows that $\eta$ is an equivalence.

We use a similar method to prove that the derived counit $\epsilon: \Q\R\M \to \M$ is an equivalence for a bounded $\K\E_L$-module $\M$, this time reducing to the case of cofree $\K\E_L$-comodules. Suppose that $\M$ is $n$-truncated and consider the map of symmetric sequences
\[ \M \to \M_{=n}. \]
This is adjoint to a map of $\K\E_L$-modules
\[ \M \to \C_{L}(\M(n)) \]
which is an isomorphism in terms $k$ with $k \geq n$. (Those terms are trivial on both sides when $k > n$, and are both equal to $\M(n)$ when $k = n$.) There is therefore a fibre sequence of $\K\E_L$-modules of the form
\[ \M' \to \M \to \C_L(\M(n)) \]
where $\M'$ is $(n-1)$-truncated. We can now use induction on $n$ to reduce to proving that the derived counit is an equivalence for a bounded cofree $\K\E_L$-module, i.e. one of the form $\C_L(\A)$ for a bounded symmetric sequence $\A$. Note that this covers the base case of the induction because any $1$-truncated $\K\E_L$-module is cofree.

Now consider the following diagram (of symmetric sequences):
\begin{equation} \label{eq:koszul-counit} \begin{diagram}
  \node{\Map_{\Sigma}(\tilde{\B}_{L},\A) \smsh_{\E_L} \tilde{\B}_{L}} \arrow{e,t}{\sim} \arrow{s,l}{\isom}
    \node{\A \smsh_{\E_L} \tilde{\B}_{L}} \arrow{s,r}{\sim} \\
  \node{\Map_{\K\E_L}(\tilde{\B}_{L},\C_L(\A)) \smsh_{\E_L} \tilde{\B}_{L}} \arrow{e,t}{\epsilon}
    \node{\C_L(\A)}
\end{diagram} \end{equation}
analogous to (\ref{eq:koszul-unit}), where the bottom horizontal map is the derived counit of the adjunction and the right-hand vertical map is the equivalence of (\ref{eq:indec-triv}). It follows that $\epsilon$ is an equivalence for the cofree $\K\E_L$-module $\C_L(\A)$, and hence for all bounded $\K\E_L$-modules. This completes the proof that $(\Q,\R)$ restricts to an equivalence between the homotopy categories of bounded $\E_L$-comodules and bounded $\K\E_L$-modules.
\end{proof}

\begin{remark}
For an arbitrary operad $\P$ in $\spectra$, there is an adjunction of the form described in Proposition~\ref{prop:koszul-EL}, between $\P$-comodules and $\K\P$-modules. However, this is typically not an equivalence, even on bounded objects. The proof of \ref{prop:koszul-EL} relies heavily on \cite[3.67]{arone/ching:2014b} which in turn crucially depends on the fact that the operad term $\E_L(n)$ is a finite free $\Sigma_n$-spectrum (i.e. a cell $\Sigma_n$-spectrum formed from finitely many free cells). The corresponding result does hold for an operad $\P$ that shares this property.
\end{remark}

\subsection*{Pirashvili-type equivalence between $\E_L$-comodules and functors $\Gamma_L \to \spectra$}
We now turn to the equivalence between $\E_L$-comodules and functors $\Gamma_L \to \spectra$ that forms the bottom-left map in the diagram (\ref{eq:KE-mod-functor}). In this case, there is no need to restrict to bounded objects. This is a generalization of Theorem~3.78 of \cite{arone/ching:2014b} in the way that work of S{\l}omi\'{n}ska \cite{slominska:2004} generalizes that of Pirashvili \cite{pirashvili:2000}. It can be viewed as a covariant, and enriched, version of a theorem of Helmstutler \cite{helmstutler:2014}.

The categories $\Comod(\E_L)$ and $[\Gamma_L,\spectra]$ have projective model structures and in this section we build a Quillen equivalence between them. This is constructed, Morita-style in a similar manner to the adjunction of Proposition~\ref{prop:koszul-EL} but with a different `bimodule' object that we now introduce.

\begin{definition}
Define a functor
\[ \mathbb{B}_L : \un{\mathbb{E}}_L^{op} \times \Gamma_L \to \based \]
by
\begin{equation} \label{eq:pirashvili-bimod} \mathbb{B}_L(I,J_+) :=  \Wdge_{I \to J} \Smsh_{j \in J} \mathbb{E}_L(I_j). \end{equation}
Notice the similarity between this definition and that of $\un{\mathbb{E}}_L$ in (\ref{eq:PROP}). The difference is that the coproduct here is taken over \emph{all} functions from $I$ to $J$, not just the surjections. (Crucial here is that $\mathbb{E}_L(0)$ is non-trivial.) More precisely, we have an isomorphism
\begin{equation} \label{eq:BL-EL} \mathbb{B}_L(I,J_+) \isom \Wdge_{K \subseteq J} \un{\mathbb{E}}_L(I,K) \end{equation}
where a component on the left-hand side corresponding to a function $\alpha: I \to J$ is identified with a term on the right-hand side with $K = \alpha(I)$. The $\mathbb{E}_L$-module structure on the first variable of $\mathbb{B}_L$ is chosen so that this is an isomorphism of $\mathbb{E}_L$-modules.

There is also a close connection between $\mathbb{B}_L$ and the mapping spaces $\Gamma_L(I_+,J_+)$ for the category $\Gamma_L$. The latter object is a coproduct indexed by all pointed functions $\alpha: I_+ \to J_+$. We can think of $\mathbb{B}_L$ as consisting only of those components corresponding to $\alpha$ with $\alpha(I) \subseteq J$.

Here is another way to describe this relationship. For finite sets $I,J$, consider the cube of based spaces, indexed by subsets $K \subseteq I$, given by
\[ K \mapsto \Gamma_L(K_+,J_+). \]
The strict total cofibre of this cube (given, for example, by taking iterated cofibres in each direction) is then homeomorphic to $\mathbb{B}_L$, i.e. we have
\begin{equation} \label{eq:BL-GL} \tcofib_{K \subseteq I} \Gamma_L(K_+,J_+) \isom \mathbb{B}_L(I,J_+) \end{equation}
and we define the functoriality of $\mathbb{B}_L$ in its second variable in such a way that, for each $I$, this is a natural isomorphism of functors $\Gamma_L \to \based$. (The reader can check that the two structures commute yielding the required functor $\mathbb{B}_L$.
\end{definition}

The following calculation is crucial.

\begin{lemma} \label{lem:BL-thocofib}
For each finite set $I$, there is a natural weak equivalence, of functors $\Gamma_L \to \based$:
\[ \thocofib_{K \subseteq I} \Gamma_L(K_+,-) \weq \mathbb{B}_L(I,-). \]
\end{lemma}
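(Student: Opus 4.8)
The plan is to promote the homeomorphism \eqref{eq:BL-GL} to a weak equivalence involving the \emph{homotopy} total cofibre, the point being that the strict total cofibre computes the homotopy total cofibre as soon as the cube $K \mapsto \Gamma_L(K_+, J_+)$ is suitably cofibrant. First I would record that for each fixed $I$ and $J$ the cube $K \mapsto \Gamma_L(K_+, J_+)$ is a cube of cofibrant based spaces (finite CW complexes, by the explicit formula \eqref{eq:Gamma_L}) and that every structure map $\Gamma_L(K_+, J_+) \to \Gamma_L(K'_+, J_+)$ induced by an inclusion $K \subseteq K'$ is a cofibration of based spaces. Granting this, the comparison map $\thocofib_{K \subseteq I} \Gamma_L(K_+, J_+) \to \tcofib_{K \subseteq I} \Gamma_L(K_+, J_+)$ from the homotopy total cofibre to the strict one is a weak equivalence, and composing with the homeomorphism \eqref{eq:BL-GL} gives the claimed equivalence $\thocofib_{K \subseteq I} \Gamma_L(K_+,-) \weq \mathbb{B}_L(I,-)$. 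Naturality in the second variable $J$ is inherited from the naturality already built into \eqref{eq:BL-GL}.

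The key steps, in order, are: (1) unwind the description \eqref{eq:Gamma_L} of the mapping spaces to see that each $\Gamma_L(K_+, J_+)$ is a based CW complex with a \emph{disjoint} basepoint, namely $\Wdge_{\alpha: K_+ \to J_+} \Smsh_{j} \mathbb{E}_L(K_j)$, a wedge of smash products of the spaces $\mathbb{E}_L(n)$ (which are themselves manifolds-with-corners, hence cofibrant, with $\mathbb{E}_L(0) = S^0$); (2) check that for $K \subseteq K'$ the induced map on mapping spaces is a based cofibration — here the reindexing along the surjection/inclusion decomposition of pointed maps $\alpha$ shows the map is a summand inclusion of wedge factors up to the evident permutations and operad-composition maps, and such inclusions are cofibrations because the relevant operad composition maps $\mathbb{E}_L(K'_j) \to \mathbb{E}_L(K_j)$-type maps, together with the ``collapse to basepoint'' on factors indexed by elements of $K' \setminus K$, assemble into a based cofibration; (3) invoke the standard fact that an $|I|$-cube that is pointwise cofibrant and in which all the maps are cofibrations (equivalently, a ``cofibrant'' cube) has the property that its strict iterated cofibre agrees with its homotopy iterated cofibre; (4) splice in \eqref{eq:BL-GL} and check that the resulting composite is natural in $J \in \Gamma_L$, which is immediate since \eqref{eq:BL-GL} was set up to be so.

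The main obstacle I expect is step (2): verifying that the structure maps of the cube $K \mapsto \Gamma_L(K_+, J_+)$ are genuine cofibrations of based spaces, rather than merely maps between cofibrant objects. One has to be careful about the interaction between the wedge decomposition over pointed maps $\alpha: K_+ \to J_+$ and the operadic composition/degeneracy maps; an inclusion $K \subseteq K'$ induces a map of indexing sets of $\alpha$'s that is neither injective nor surjective, and one must check that after grouping, the map $\Gamma_L(K_+,J_+) \to \Gamma_L(K'_+,J_+)$ restricted to each wedge summand on the source lands in a wedge summand on the target via a map that is itself a cofibration (e.g. an inclusion induced by the unit $S^0 = \mathbb{E}_L(0) \hookrightarrow \mathbb{E}_L(n)$ composed with operad structure maps, or a cellular inclusion). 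A clean way to finish this is to observe that all of the spaces in sight are finite CW complexes and all the maps are cellular, so by the cellular approximation/CW-cofibration package the cube is cofibrant in the Reedy (or projective) sense on the poset of subsets of $I$; then step (3) is the textbook statement that for such cubes $\tcofib \simeq \thocofib$. An alternative, avoiding point-set delicacies entirely, would be to argue directly: build the homotopy total cofibre as an iterated (homotopy) pushout and compare it cell-by-cell to $\mathbb{B}_L(I, J_+) \isom \Wdge_{K \subseteq J}\un{\mathbb{E}}_L(I,K)$ using the decomposition \eqref{eq:BL-EL}, checking that the only surviving terms are exactly those indexed by $\alpha$ with $\alpha(I) \subseteq J$ — but this amounts to re-proving \eqref{eq:BL-GL} homotopically, so the cofibrancy route is preferable.
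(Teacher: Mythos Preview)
Your overall strategy matches the paper's: reduce via \eqref{eq:BL-GL} to showing that the strict and homotopy total cofibres of the cube $K \mapsto \Gamma_L(K_+,J_+)$ agree. However, step (3) of your plan contains a genuine gap. The condition ``pointwise cofibrant with all edge maps cofibrations'' is \emph{not} equivalent to the cube being Reedy cofibrant, and is \emph{not} sufficient for $\tcofib \homeq \thocofib$. A two--dimensional counterexample: take $\mathscr{X}(\emptyset) = *$, $\mathscr{X}(\{1\}) = \mathscr{X}(\{2\}) = \mathscr{X}(\{1,2\}) = S^0$, with the maps out of $*$ the basepoint inclusion and the remaining edges the identity. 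Every edge is a cofibration, yet the latching map at the terminal vertex is the fold $S^0 \wdge S^0 \to S^0$, which is not a cofibration; the strict total cofibre is $*$ while the homotopy total cofibre is $S^1$. So the ``standard fact'' you invoke is false, and checking only the edge maps does not close the argument.

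The correct hypothesis --- and the one the paper actually verifies --- is that for each $K \subseteq I$ the \emph{latching} map
\[ \colim_{K' \subsetneq K} \Gamma_L(K'_+,J_+) \to \Gamma_L(K_+,J_+) \]
is a cofibration. Once you aim for this, the verification is far simpler than your step (2) suggests. In the wedge decomposition \eqref{eq:Gamma_L} over pointed maps $\alpha: K_+ \to J_+$, the image of the latching map is exactly the union of those summands indexed by $\alpha$ with $\alpha(K) \nsubseteq J$: such an $\alpha$ sends some $k \in K$ to the basepoint and hence factors through the collapse $K_+ \to (K \setminus \{k\})_+$. Thus the latching map is an inclusion of path-components, which is evidently a cofibration. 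No operad composition maps, degeneracies, or cellular-approximation arguments are needed; your worries about the interaction of the indexing by $\alpha$'s with the operad structure are misplaced, because the structure maps of the cube (induced by collapsing extra wedge summands to the basepoint) act on the $\alpha$-indexing purely combinatorially and carry each summand isomorphically onto a summand of the target.
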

\begin{proof}
Given (\ref{eq:BL-GL}), this amounts to showing that, for each $J_+ \in \Gamma_L$, the canonical map between the homotopy and strict total cofibres of the cube $\mathscr{X}(K) := \{\Gamma_L(K_+,J_+)\}$ is a weak equivalence. A standard condition for this is that, for each $K \subseteq I$, the map
\[ \colim_{L \subsetneq K} \mathscr{X}(L) \to \mathscr{X}(K) \]
is a cofibration of spaces. In our setting, that map is the inclusion into $\Gamma_L(K_+,J_+)$ of those components corresponding to pointed functions $\alpha: K_+ \to J_+$ for which $\alpha(K) \nsubseteq J$. An inclusion of components is a cofibration so this completes the proof.
\end{proof}

\begin{definition} \label{def:pirashvili}
We now build a Quillen equivalence of the form
\[ \L : \Comod(\E_L) \rightleftarrows [\Gamma_L,\spectra] : \X \]
that is constructed as follows.

Viewing $\mathbb{B}_L$ as a functor $\un{\mathbb{E}}_L^{op} \to [\Gamma_L,\based]$, we can compose with a (topologically-enriched) cofibrant replacement functor for $[\Gamma_L,\based]$ to obtain $\tilde{\mathbb{B}}_L : \un{\mathbb{E}}_L^{op} \times \Gamma_L \to \based$ such that $\tilde{\mathbb{B}}_L \homeq \mathbb{B}_L$ and with the property that for each finite set $I$, $\tilde{\mathbb{B}}_L(I,-)$ is cofibrant in $[\Gamma_L,\based]$.

Now define $\L: \Comod(\E_L) \to [\Gamma_L,\spectra]$ by
\[ \L(\N) := \N \smsh_{\E_L} \tilde{\mathbb{B}}_L \]
with right adjoint $\X: [\Gamma_L,\spectra] \to \Comod(\E_L)$ given by
\[ \X(G) := \Map_{\Gamma_L}(\tilde{\mathbb{B}}_L,G), \]
a coend and end, respectively, using, respectively, the tensoring and cotensoring of $\spectra$ over $\based$.
\end{definition}

\begin{proposition} \label{prop:pirashvili}
The adjunction of Definition~\ref{def:pirashvili} is a Quillen equivalence between the projective model structures on $\Comod(\E_L)$ and $[\Gamma_L,\spectra]$.
\end{proposition}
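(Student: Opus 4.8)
The plan is to adapt the proof of Theorem~3.78 of \cite{arone/ching:2014b} (itself S{\l}omi\'{n}ska's generalization of Pirashvili's equivalence, and, as remarked above, a covariant enriched form of Helmstutler's theorem); the one essentially new ingredient is Lemma~\ref{lem:BL-thocofib}. First, $(\L,\X)$ is a Quillen adjunction: fibrations and trivial fibrations in the projective model structure on $[\Gamma_L,\spectra]$ are detected objectwise, and $\X(G)(I)=\Map_{\Gamma_L}(\tilde{\mathbb{B}}_L(I,-),G)$ with $\tilde{\mathbb{B}}_L(I,-)$ cofibrant in $[\Gamma_L,\based]$ by construction, so $\X$ preserves both. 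Every object of $[\Gamma_L,\spectra]$ is fibrant, so $\mathbb{R}\X=\X$, while $\mathbb{L}\L=\L\circ(\text{cofibrant replacement})$.

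Two formal properties then reduce the whole statement to a single computation. By Lemma~\ref{lem:BL-thocofib} and (\ref{eq:BL-GL}), $\tilde{\mathbb{B}}_L(I,-)$ is equivalent to the total cofibre of the finite cube $K\mapsto\Gamma_L(K_+,-)$ of representables; hence, by the enriched Yoneda lemma, $\X(G)(I)\homeq\thofib_{K\subseteq I}G(K_+)$, a finite homotopy limit of evaluation functors. It follows that $\X$ is exact and preserves geometric realizations and coproducts, hence all homotopy colimits; and it follows, by induction on $|I|$, that $\X$ is conservative (if $\X(G)\homeq *$ then $G(K_+)\homeq *$ for all $K$). Given that $\L$ preserves all homotopy colimits and $\X$ is conservative, the adjunction is a Quillen equivalence as soon as the derived unit $\eta\colon\N\to\X\L\N$ is an equivalence for $\N$ in a set of cofibrant generators --- the derived counit is then handled by the triangle identities together with conservativity of $\X$. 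Standard reductions---the bar resolution, together with the fact that both derived functors preserve geometric realizations and coproducts---bring us down to the free $\E_L$-comodules $\N=\un{\E}_L(I,-)$.

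It thus remains to prove that for finite sets $I,I'$ the natural map induced by the $\mathbb{E}_L$-module structure on the first variable of $\mathbb{B}_L$,
\[ \un{\E}_L(I,I')\longrightarrow\Map_{\Gamma_L}\!\big(\tilde{\mathbb{B}}_L(I',-),\,\Sigma^\infty\mathbb{B}_L(I,-)\big), \]
is an equivalence; this is the value at $I'$ of the derived unit for $\N=\un{\E}_L(I,-)$, using $\L(\un{\E}_L(I,-))\homeq\Sigma^\infty\mathbb{B}_L(I,-)$ by the enriched co-Yoneda lemma. Here I would substitute $\tilde{\mathbb{B}}_L(I',-)\homeq\thocofib_{K'\subseteq I'}\Gamma_L(K'_+,-)$ from Lemma~\ref{lem:BL-thocofib}, use that mapping out of this finite homotopy colimit produces the corresponding total homotopy fibre, and apply the enriched Yoneda lemma, obtaining
\[ \Map_{\Gamma_L}\!\big(\tilde{\mathbb{B}}_L(I',-),\,\Sigma^\infty\mathbb{B}_L(I,-)\big)\;\homeq\;\thofib_{K'\subseteq I'}\;\Sigma^\infty\mathbb{B}_L(I,K'_+). \]
With $\mathbb{B}_L(I,K'_+)=\Wdge_{f\colon I\to K'}\Smsh_{k'\in K'}\mathbb{E}_L(f^{-1}(k'))$ as in (\ref{eq:pirashvili-bimod}), the cube $K'\mapsto\mathbb{B}_L(I,K'_+)$ is assembled from the collapse maps, and one checks (by iterated fibres) that its total fibre is the wedge of exactly those summands indexed by \emph{surjections} $f\colon I\epi I'$ --- precisely the $f$ not factoring through a proper subset of $I'$ --- so the right-hand side is $\Wdge_{f\colon I\epi I'}\Smsh_{i'}\mathbb{E}_L(f^{-1}(i'))=\un{\E}_L(I,I')$, compatibly with the structure map above.

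I expect this last point --- the inclusion--exclusion identification of the total fibre of the ``all functions into subsets'' cube with the ``surjections only'' object $\un{\E}_L(I,I')$ --- to be the main obstacle; it is the exact analogue, for the PROP of $\mathbb{E}_L$, of the passage between $\Gamma$ and $\mathsf{\Omega}$ in \cite{arone/ching:2014b}, and is the combinatorial phenomenon underlying the theorems of Pirashvili and S{\l}omi\'{n}ska. The remaining work is bookkeeping: checking that replacing $\mathbb{B}_L$ by the cofibrant $\tilde{\mathbb{B}}_L$ disturbs none of these manipulations (only the objectwise equivalence $\tilde{\mathbb{B}}_L\homeq\mathbb{B}_L$ and the cofibrancy of each $\tilde{\mathbb{B}}_L(I,-)$ are used), and that the colimit reductions genuinely descend to the homotopy categories.
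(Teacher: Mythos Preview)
Your proposal is correct and follows essentially the same route as the paper: establish the Quillen adjunction via cofibrancy of $\tilde{\mathbb{B}}_L(I,-)$, use Lemma~\ref{lem:BL-thocofib} to rewrite $\X(G)(I)$ as $\thofib_{K\subseteq I}G(K_+)$ (whence $\X$ preserves homotopy colimits and is conservative), reduce the derived unit to free comodules, and then verify the key map $\un{\E}_L(n,r)\to\Map_{\Gamma_L}(\tilde{\mathbb{B}}_L(r,-),\Sigma^\infty\mathbb{B}_L(n,-))$ is an equivalence by the inclusion--exclusion identification of the total fibre of $K\mapsto\mathbb{B}_L(n,K_+)\isom\Wdge_{I\subseteq K}\un{\mathbb{E}}_L(n,I)$; the counit is dispatched via the triangle identity and conservativity exactly as you say. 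The only cosmetic difference is that the paper reduces via the generating cofibrations (hence carries a finite cell spectrum $A$ through the computation) rather than via the bar resolution, but the core computation and the overall architecture are the same.
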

\begin{proof}
This proof is very similar to the proof in \cite{arone/ching:2014b} that the bottom-left map in (\ref{eq:classification}) is part of a Quillen equivalence. First note that for each finite nonempty set $I$, the object $\tilde{\mathbb{B}}_L(I,-)$ is a cofibrant object in the projective model structure on $[\Gamma_L,\spectra]$. It follows that the right adjoint $\X$ preserves fibrations and trivial fibrations (since these are detected objectwise in both categories) and hence that $(\L,\X)$ is a Quillen adjunction.

Our goal is now to show that the derived unit and counit of this adjunction are equivalences. Key to this is the equivalence of Lemma~\ref{lem:BL-thocofib}. Applying the derived mapping space construction $\widetilde{\Map}_{J_+ \in \Gamma_L}(-, G(J_+))$ to this, we get, using Yoneda, an equivalence
\begin{equation} \label{eq:R-creff} \X(G)(I) \weq \thofib_{K \subseteq I} G(K_+). \end{equation}
This describes $\X(G)(I)$ as the \ord{$I$} cross-effect of $G$ evaluated at $(D^L_+,\dots,D^L_+)$, hence the notation $\X$.

In particular, it follows from this calculation that the right adjoint $\X$ preserves homotopy colimits, since the cross-effect can also be recovered from the total homotopy cofibre of the same cube. We can also use (\ref{eq:R-creff}) to show, by induction, that $\X$ reflects equivalences, i.e. if a natural transformation $g: G \to G'$ induces an equivalence of $\E_L$-comodules $\X(G) \weq \X(G')$, then $g$ itself is an equivalence.

Now consider the derived unit map $\N \to \X\L\N$ for a $\E_L$-comodule $\N$. Since both $\X$ and $\L$ commute with homotopy colimits, we can use the cofibrantly generated model structure on $\Comod(\E_L)$ to reduce to the case that $\N$ is either the source or target of one of the generating cofibrations, i.e. of the form
\[ \N(r) = A \smsh \un{\E}_L(n,r) \]
for some finite spectrum $A$ and positive integer $n$. In this case, the unit map reduces to
\[ A \smsh \un{\E}_L(n,r) \to \Map_{J_+ \in \Gamma_L}(\tilde{\mathbb{B}}_L(r,J_+),A \smsh \tilde{\mathbb{B}}_L(n,J_+)) \]
and so, since $A$ is finite, it is sufficient to show that the canonical map
\begin{equation} \label{eq:EL} \un{\E}_L(n,r) \to \Map_{J_+ \in \Gamma_L}(\Sigma^\infty \tilde{\mathbb{B}}_L(r,J_+), \Sigma^\infty \tilde{\mathbb{B}}_L(n,J_+)) \end{equation}
(determined by the $\mathbb{E}_L$-module structure on $\mathbb{B}_L$) is a weak equivalence. This follows from the existence of the following commutative diagram:
\[ \begin{diagram}
  \node{\un{\E}_L(n,r)} \arrow{e} \arrow{s,l}{\sim} \node{\Map_{J_+ \in \Gamma_L}(\Sigma^\infty \tilde{\mathbb{B}}_L(r,J_+), \Sigma^\infty \tilde{\mathbb{B}}_L(n,J_+))} \arrow{s,r}{\sim} \\
  \node{\thofib_{K \subseteq \un{r}} \Wdge_{I \subseteq K} \un{\E}_L(n,I)} \arrow{e,t}{\sim} \node{\thofib_{K \subseteq \un{r}} \Sigma^\infty \mathbb{B}_L(n,K_+)}
\end{diagram} \]
where the right-hand map is the equivalence of (\ref{eq:R-creff}), the bottom map comes from (\ref{eq:BL-EL}), and the left-hand vertical map is induced by the inclusion of $\un{\E}_L(n,r)$ into the initial vertex of the given cube as the term where $I = K = \un{r}$. It is a simple calculation to show that this map is an equivalence. This completes the proof that the derived unit map is an equivalence for all $\N$.

Finally, we turn to the derived counit map $\epsilon_G: \L\X G \to G$. First note that the unit map
\[ \X G \to \X\L\X G \]
is an equivalence by the above, and so the triangle identity implies that
\[ \X(\epsilon_G) : \X\L\X G \to \X G \]
is an equivalence. But we have already shown that $\X$ reflects equivalences, so we deduce that $\epsilon_G$ is an equivalence too. This completes the proof that $(\L,\X)$ is a Quillen equivalence.
\end{proof}

\begin{proof}[Proof of Theorem~\ref{thm:KE-mod-functor}]
We now return to diagram (\ref{eq:KE-mod-functor}). We construct this as follows:
\begin{equation} \label{eq:KE-mod-functor-big} \begin{diagram} \dgARROWLENGTH=3em
  \node[2]{\Mod(\K\E_L)} \arrow[2]{e,t}{\U}
    \node[2]{\Coalg(\C)} \arrow{se,t}{\A \mapsto F_{\A}} \\
  \node{\Comod(\E_L)} \arrow{ne,t}{\Q} \arrow[2]{e,t}{- \homsmsh_{\E_L} \un{\Com}} \arrow{se,b}{\L}
    \node[2]{\Comod(\Com)} \arrow{ne,t}{\hocolim_L - \homsmsh_{\E_L} \tilde{\B}_{L}}
    \arrow[2]{e,t}{- \smsh_{\Com} \widetilde{\Sigma^\infty X^{\smsh *}}}
    \arrow{se,b}{- \smsh_{\Com} \widetilde{\Sigma^\infty (J_+)^{\smsh *}}}
    \node[2]{[\finbased,\spectra]} \\
  \node[2]{[\Gamma_L,\spectra]} \arrow[2]{e,b}{\mathsf{hLKan}}
    \node[2]{[\Gamma,\spectra]} \arrow{ne,b}{\mathsf{hLKan}}
\end{diagram} \end{equation}
The right-hand part of this diagram is (\ref{eq:classification}) without the restrictions to bounded objects or polynomial functors. The bottom horizontal map is (enriched) homotopy left Kan extension along the functor $\pi_0: \Gamma_L \to \Gamma$ that sends the object $\Wdge_I D^L_+$ of $\Gamma_L$ to the finite pointed set $I_+$, and which forgets the `topological' information about a morphism. To finish the proof of Theorem~\ref{thm:KE-mod-functor}, it is now sufficient to show that the two quadrilaterals in this diagram commute up to natural equivalence.

For the top-left quadrilateral, take an $\E_L$-comodule $\N$. The composite of the middle horizontal and diagonal arrows applied to $\N$ gives
\[ \hocolim_{L'} (\N \homsmsh_{\E_L} \un{\Com} \homsmsh_{\E_{L'}} \tilde{\B}_{L'} \]
which is equivalent to
\[ \N \homsmsh_{\E_L} (\hocolim_{L'} \un{\Com} \homsmsh_{\E_{L'}} \tilde{\B}_{L'}). \]
Writing $\un{\Com} = \hocolim_{L'} \un{\E}_{L'}$ this reduces to
\[ \tag{*} \N \homsmsh_{\E_L} (\hocolim_{L'} \tilde{\B}_{L'}). \]
The maps
\[ \B_0 \to \B_1 \to \dots \]
induced by the operad maps $\E_0 \to \E_1 \to \dots$ are all equivalences (each term is equivalent to the trivial bisymmetric sequence $\un{\one}$) and so in particular we have an equivalence (of $\E_L$-modules in one variable and $\C$-coalgebras in the other):
\[ \hocolim_{L'} \tilde{\B}_{L'} \homeq \tilde{\B}_L \]
so (*) is equivalent to $\N \homsmsh_{\E_L} \tilde{\B}_L = \U(\Q(\N))$.

For the bottom-left quadrilateral, it is sufficient to show the diagram formed by replacing the horizontal functors with their right adjoints commutes up to natural equivalence. This diagram has the form
\[ \begin{diagram}
  \node{\Comod(\E_L)} \arrow{s,lr}{- \smsh_{\E_L} \tilde{\mathbb{B}}_L}{\sim}
    \node{\Comod(\Com)} \arrow{s,lr}{\sim}{- \smsh_{\Com} \widetilde{\Sigma^\infty (J_+)^{\smsh *}}} \arrow{w,t}{\U} \\
  \node{[\Gamma_L,\spectra]} \node{[\Gamma,\spectra]} \arrow{w,b}{\mathsf{res}}
\end{diagram} \]
Each direction here takes the form
\[ \N \mapsto  \left[ J_+ \mapsto \Wdge_{K \subseteq J} \N(K) \right] \]
which proves the claim. This completes the construction of the diagram (\ref{eq:KE-mod-functor-big}). Theorem~\ref{thm:KE-mod-functor} now follows from the fact that $\Q$ and $\L$ are equivalences ($\Q$ only on those bounded objects that correspond to polynomial functors).
\end{proof}

\begin{example}
Taking $L = 0$ in Theorem~\ref{thm:KE-mod-functor} we see that the following conditions are equivalent for a polynomial functor $F: \finbased \to \spectra$:
\begin{itemize}
  \item the $\C$-coalgebra structure on $\der_*(F)$ is `trivial' (in the sense that it is equivalent to that arising from the symmetric sequence $\der_*(F)$ via the coaugmentation $1 = \C_0 \to \C$);
  \item the Taylor tower of $F$ splits;
  \item $F$ is the left Kan extension from a functor on the subcategory of $\finbased$ consisting of the finite pointed sets and functions $f: I_+ \to J_+$ that are injections away from the basepoint (i.e. for each $j \in J$, $f^{-1}(j)$ has at most one element).
\end{itemize}
\end{example}

\section{A category of pointed framed manifolds} \label{sec:fMfld}

In this section, we use Theorem~\ref{thm:KE-mod-functor} to find a wider variety of functors whose derivatives are a $\K\E_L$-module. In particular, we consider the question of when the derivatives of a representable functor, i.e. one of the form $\Sigma^\infty \Hom_{\based}(X,-)$ for a finite based CW-complex $X$, have the structure of a $\K\E_L$-module for some given $L$.

We show that this is the case when $X$ is (or is homotopy equivalent to) a `pointed framed $L$-dimensional manifold' in the sense of Definition~\ref{def:pfm} below. This means that when the basepoint of $X$ is removed, the remaining space can be given the structure of a framed smooth manifold of dimension $L$, possibly with boundary. In particular, notice that the sphere $S^L$ has this property, so we deduce that the iterated (stable) loop-space functor $\Sigma^\infty \Omega^L$ has a $\K\E_L$-module structure on its derivatives. Similarly, since $S^1_+$ is a pointed framed $1$-manifold, the derivatives of the free loop-space functor $\Sigma^\infty L = \Sigma^\infty \Hom_{\based}(S^1_+,-)$ form a $\K\E_1$-module.

We also give conditions under which a map $f:X \to Y$ between pointed framed manifolds induces a map between the derivatives of the representable functors that is a morphism of $\K\E_L$-modules. This happens when $f$ is a `pointed framed embedding', i.e. $f$ is an embedding (when restricted to the part of $X$ that does not map to the basepoint in $Y$) that respects the framing.

All the manifolds we consider in this section are smooth, possibly with boundary. A \emph{framing} on a smooth $L$-dimensional manifold $M$ is a choice of isomorphism of vector bundles
\[ TM \isom M \times \mathbb{R}^L. \]
In particular we do not mean `stably-framed', that is, we do not allow for the addition of trivial bundles to $TM$ get such an isomorphism. Such a structure therefore exists if and only if $M$ is parallelizable.

Given a smooth map $f: M \to N$ between smooth $L$-manifolds, each with a framing, we can express the derivative $Df(x) : T_xM \to T_{f(x)}N$ as an $(L \times L)$ matrix via the identifications $T_xM \isom \mathbb{R}^L$ and $T_{f(y)}N \isom \mathbb{R}^L$ that come with the framings. We say that the map $f$ is \emph{framed} if there is a locally constant function $\lambda: M \to \mathbb{R}^+$ such that $Df(x) = \lambda(x) \mathbf{Id}$ for all $x \in M$. This means that, on each component of $M$, the derivative is a constant positive scalar multiple of the identity matrix.

\begin{definition} \label{def:pfm}
Fix an integer $L \geq 0$. A \emph{pointed framed $L$-manifold} is a based topological space $(X,x_0)$ together with the structure of a framed smooth $L$-dimensional manifold, possibly with boundary, on the topological space $X - \{x_0\}$.

Given two pointed framed $L$-manifolds $(X,x_0)$ and $(Y,y_0)$, a \emph{pointed framed embedding} $f: X \to Y$ is a continuous basepoint-preserving map such that the restriction of $f$ to $f^{-1}(Y - \{y_0\})$ is a framed embedding. We write $\Emb^f_*(X,Y)$ for the space of pointed framed embeddings $X \to Y$ with the compact-open topology. These spaces form the mapping spaces in a topologically-enriched category of pointed framed $L$-manifolds.
\end{definition}

\begin{remark}
Our pointed framed manifolds are a smooth and framed version of the `zero-pointed manifolds' developed independently, and much more extensively, by Ayala and Francis \cite{ayala/francis:2014} for their study of factorization homology.
\end{remark}

We now introduce a condition on a pointed framed manifold $(X,x_0)$ that restricts how the topology around the basepoint $x_0$ is related to the framing on $X - \{x_0\}$. The idea is that $x_0$ should have a basis of open neighbourhoods that are contractible in a framed sense.

\begin{definition} \label{def:f-contr}
A pointed framed manifold $(V,x_0)$ is \emph{f-contractible} if there exists a homotopy
\[ h: V \smsh [0,1]_+ \to V \]
such that $h_1 = \mathsf{id}_V$, $h_0$ is the constant map with value $x_0$, and, for each $t \in [0,1]$, $h_t: V \to V$ is a pointed framed embedding. Thus $V$ deformation retracts onto $x_0$ through pointed framed embeddings.

A pointed framed manifold $(X,x_0)$ is \emph{locally f-contractible} if there is a basis of open neighbourhoods $\{V_\alpha\}$ around $x_0$ such that each $(V_{\alpha},x_0)$ is framed contractible.
\end{definition}

\begin{examples} \label{ex:fmfld}
Let $M$ be a framed $L$-dimensional manifold with boundary. Then we can get locally f-contractible pointed framed manifolds from $M$ in a variety of ways:
\begin{enumerate}
  \item For any $x \in M$, $(M,x)$ is a locally f-contractible pointed framed manifold (with the induced framing on $M - \{x\}$. In this case, a basis of f-contractible neighbourhoods of $x$ is given by the framed embedded discs with centre $x$.
  \item Adding a disjoint basepoint $+$, we get a locally f-contractible pointed framed manifold $(M_+,+)$.
  \item We can add a `framed collar' to the boundary of $M$ by forming $M' := (M \times \{0\}) \cup_{\partial M \times \{0\}} (\partial M \times [0,1])$ where $\partial M \times [0,1]$ has the product framing. The quotient $M'/(\partial M \times \{1\})$ is then a locally f-contractible pointed framed manifold that is homotopy equivalent to $M/\partial M$.
\end{enumerate}
\end{examples}

\begin{examples}
The sphere $S^L$ can be given the structure of a locally f-contractible pointed framed $L$-manifold by thinking of it as the one-point compactification, either of $\mathbb{R}^L$ or of the open unit disc $\mathring{D}^L$ inside $\mathbb{R}^L$. In each case, a basis of f-contractible neighbourhoods of the basepoint consists of the complements of the closed discs centred the origin, with contractions given by Euclidean dilations.
\end{examples}

\begin{definition}
We now introduce a full subcategory of the category of pointed framed $L$-manifolds that we denote $\fMfld^L_*$. The objects of $\fMfld^L_*$ are those locally f-contractible pointed framed manifolds whose underlying space has the structure of a finite cell complex. In particular, we have a forgetful functor
\[ U: \fMfld^L_* \to \finbased. \]
\end{definition}

\begin{remark}
Any finite cell complex $X$ is homotopy equivalent to a compact codimension zero submanifold of Euclidean space, and hence to a compact framed manifold. Thus any based finite cell complex $X$ is homotopy equivalent to an object of $\fMfld^L_*$ for some $L$. Similarly, any map $f:X \to Y$ between finite cell complexes can be modelled as a pointed framed embedding by embedding the mapping cylinder of $f$ into some Euclidean space.
\end{remark}

\begin{examples}
The category $\fMfld^0_*$ is equivalent to that denoted $\Gamma_0$ in Definition~\ref{def:Gamma-L}: the objects are finite pointed sets, and the morphisms are those basepoint preserving functions that are injective away from the basepoint.

For $L > 0$, the category $\Gamma_L$ is the full subcategory of $\fMfld^L_*$ whose objects are the finite wedge sums $\Wdge_n D^L_+$. Interestingly, $\fMfld^L_*$ also contains a full subcategory that is equivalent to $\Gamma_L^{op}$, namely that whose objects are the finite wedge sums $\Wdge_n S^L$ (with framing coming from that on the open unit disc). The morphisms in this case are the Thom-Pontryagin collapse maps associated to the corresponding morphisms in $\Gamma_L$.
\end{examples}

The following lemma contains the key technical result of this section: a calculation of the homotopy type of the space of pointed framed embeddings from a finite wedge sum $\Wdge_n D^L_+$ to a locally f-contractible pointed framed manifold $X$.

\begin{lemma} \label{lem:emb}
Let $X$ be a locally f-contractible pointed framed manifold, and let $C_1(n,X)$ denote the subspace of $X^n$ consisting of $n$-tuples $(x_1,\dots,x_n)$ where $x_i \neq x_j$ for $i \neq j$, unless $x_i = x_j = x_0$. (Thus $C_1(n,X)$ is the ordinary configuration space of $n$ points in $X$ with the exception that repetitions of the basepoint are allowed.) Then there is a natural weak equivalence
\[ \Theta: \Emb^f_*\left(\Wdge_n D^L_+,X\right) \weq C_1(n,X) \]
given by
\[ \Theta(f) := (f_1(0),\dots,f_n(0)). \]
\end{lemma}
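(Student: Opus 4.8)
The plan is to build the map $\Theta$, check it is well-defined and continuous, and then prove it is a weak equivalence by constructing a homotopy inverse — or rather, by showing that $\Theta$ is a fibration-like comparison whose fibres are contractible, exploiting the local f-contractibility of $X$. The point $\Theta(f) = (f_1(0),\dots,f_n(0))$ is the tuple of centres of the embedded discs; it lands in $C_1(n,X)$ because a pointed framed embedding sends distinct discs to disjoint images unless some of them are the constant map to $x_0$, and the components mapping to $x_0$ contribute exactly the allowed repetitions of the basepoint. Continuity in the compact-open topology is routine. The naturality claim refers to naturality in pointed framed embeddings $X \to Y$ (postcomposition on the left, and an obvious action on $C_1(n,-)$), which is immediate from the formula.

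\textbf{Reduction to the connected/single-point case.} First I would reduce to $n = 1$. An embedding $f: \Wdge_n D^L_+ \to X$ is the same as an $n$-tuple of pointed framed embeddings $f_i: D^L_+ \to X$ whose non-constant members have disjoint images. So $\Emb^f_*(\Wdge_n D^L_+, X)$ sits inside $\prod_n \Emb^f_*(D^L_+, X)$ as the subspace cut out by the disjointness condition, and $C_1(n,X) \subseteq X^n$ similarly. Thus it suffices to understand $\Theta_1 : \Emb^f_*(D^L_+, X) \to X$, $f \mapsto f(0)$, together with enough control to see that the disjointness constraints match up on both sides (an open condition on the centres once the discs are small, handled by a scaling/shrinking homotopy).

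\textbf{The key step: contractibility of the fibres of $\Theta_1$.} Over a point $x \neq x_0$, the fibre of $\Theta_1$ is the space of framed embeddings of $D^L$ into the manifold $X - \{x_0\}$ sending $0$ to $x$; since $X - \{x_0\}$ is a framed $L$-manifold, a framed embedding of a disc is determined up to contractible choice by its centre (this is the standard fact that the space of framed embeddings $D^L \hookrightarrow M$ sending $0$ to a fixed interior point deformation retracts onto a point, by rescaling toward the germ at $x$). Over the basepoint $x_0$, the fibre is the space of pointed framed embeddings $D^L_+ \to X$ sending $0 \mapsto x_0$; here is where \emph{local f-contractibility} enters: such an embedding has image contained in some f-contractible neighbourhood $V_\alpha$ of $x_0$, and the f-contracting homotopy $h_t$ of $V_\alpha$ (composed with our embeddings) contracts this fibre to the constant map. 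To upgrade ``contractible fibres'' to ``weak equivalence on total spaces'' I would either verify that $\Theta_1$ is a quasifibration (checking the Dold criterion over the open cover $\{X - \{x_0\}\} \cup \{V_\alpha\}$, using the local triviality away from $x_0$ and an f-contractible chart near $x_0$) and apply the long exact sequence, or — cleaner — directly produce a deformation retraction of $\Emb^f_*(D^L_+,X)$ onto a section of $\Theta_1$ by the simultaneous-shrinking homotopy: shrink each embedded disc radially toward its centre $f(0)$, which is a homotopy through pointed framed embeddings (using $h_t$ near $x_0$ and Euclidean dilation elsewhere) covering the identity on $X$, landing in a subspace on which $\Theta_1$ is a homeomorphism onto its image.

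\textbf{Main obstacle.} The delicate point is the behaviour near the basepoint: for embeddings whose disc is sent near $x_0$, the ``shrink toward the centre'' homotopy must be carried out inside an f-contractible neighbourhood and must remain a valid pointed framed embedding at every time $t$, and one must patch this continuously with the purely Euclidean shrinking used away from $x_0$ as the centre $f(0)$ crosses between ``near $x_0$'' and ``far from $x_0$''. This is exactly what f-contractibility (the homotopy $h_t$ through pointed framed embeddings, equal to the identity at $t=1$) is designed to make possible, but organizing the patching for all $n$ centres simultaneously — so that the disjointness of the non-constant discs is preserved throughout — is the technical heart of the argument. The rest (continuity, the $n=1$ reduction, naturality) is formal.
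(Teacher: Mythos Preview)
Your strategy differs from the paper's. The paper applies McCord's criterion directly to the map $\Theta$ for general $n$: it builds a basis $\mathcal{U}$ for $C_1(n,X)$ whose elements have the form $[V_1 \times \cdots \times V_n] \cap C_1(n,X)$, where each $V_i$ is either an embedded open disc in $X - \{x_0\}$ or a common f-contractible neighbourhood $V$ of $x_0$, with the discs pairwise disjoint. Each such $U$ is contractible, and the paper shows each $\Theta^{-1}(U)$ is contractible in two steps: a shrinking homotopy (using a carefully defined continuous scale function $s(f)$) that pushes every embedding into the subspace $\Delta(U)$ where $f_i(D^L) \subseteq V_i$, followed by a contraction of $\Delta(U)$, which splits as a product of framed-disc-in-disc embedding spaces times $\Emb^f_*(\bigvee_I D^L_+, V)$, the latter contracted via the f-contraction of $V$. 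No section of $\Theta$ and no quasifibration verification is needed.

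Your approach has two genuine gaps. First, the ``reduction to $n=1$'' is not a reduction: knowing that $\Theta_1$ is a weak equivalence does not formally imply that the restriction of $\Theta_1^{\times n}$ to the disjointness locus is one, and the phrase ``enough control to see that the disjointness constraints match up'' is exactly the content of the lemma for $n > 1$. Second, the ``cleaner'' alternative of deformation retracting onto a section of $\Theta_1$ cannot work as written: there is no continuous section $X \to \Emb^f_*(D^L_+,X)$, since that would require choosing continuously in $x$ a framed disc centred at $x$ that degenerates to the constant map as $x \to x_0$, and radial shrinking toward radius zero does not land in the embedding space when the centre is not $x_0$. The quasifibration route is salvageable, but to carry it out you would need an open cover of $C_1(n,X)$ rather than of $X$, check the Dold--Thom patching conditions there, and control the fibres over each piece --- at which point you are essentially reproducing the McCord argument the paper uses.
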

\begin{proof}
We use the following criterion of McCord~\cite[Thm. 6]{mccord:1966} to show that $\Theta$ is a weak equivalence.

\begin{lemma}[McCord] \label{lem:mccord}
Let $f: Y \to Z$ be a continuous map and let $\mathcal{U}$ be a basis for the topology on $Z$ such that, for each $U \in \mathcal{U}$ the map $f^{-1}(U) \to U$ is a weak equivalence. Then $f$ is a weak equivalence.
\end{lemma}

We construct the necessary basis $\mathcal{U}$ as follows. Let $\mathcal{V}$ denote a basis of f-contractible open neighbourhoods of $x_0$. The elements of $\mathcal{U}$ are then the open subsets of $C_1(n,X)$ of the form
\[ U = [V_1 \times \dots \times V_n] \cap C_1(n,X) \]
where each $V_i$ is either: (i) an embedded open disc in $X - \{x_0\}$ (whose closure is also contained in $X - \{x_0\}$); or (ii) an element of $\mathcal{V}$.

We insist that $V_1,\dots,V_n$ are pairwise disjoint with the exception of those that are in $\mathcal{V}$ which we insist are equal. In other words, $\{V_1,\dots,V_n\}$ is a collection of disjoint embedded open discs in $X - \{x_0\}$, possibly together with an element of $\mathcal{V}$ (that can be repeated). The corresponding set $U$ then consists of the $n$-tuples $(x_1,\dots,x_n)$ where $x_i \in V_i$, and the $x_i$ are distinct (though repetitions of the basepoint are permitted).

It is not hard to see that $\mathcal{U}$ is a basis for $C_1(n,X)$. Each $U \in \mathcal{U}$ is contractible: there is a deformation retraction from $U$ to the point $(x_1,\dots,x_n)$ where $x_i$ is the center of each embedded disc $V_i$, and $x_i = x_0$ for those $V_i$ that equal $V \in \mathcal{V}$. A deformation retraction is given by combining a contraction of each embedded disc to its center with a contraction $c$ of $V$ of the form guaranteed by Definition~\ref{def:f-contr}.

It is now sufficient by Lemma~\ref{lem:mccord} to show that each $\Theta^{-1}(U)$ is contractible. We do this in two stages:
\begin{enumerate}
  \item construct a homotopy between the identity on $\Theta^{-1}(U)$ and a map whose image is contained in a certain subset $\Delta(U) \subseteq \Theta^{-1}(U)$;
  \item show that $\Delta(U)$ is contractible.
\end{enumerate}

Notice that $\Theta^{-1}(U)$ is the set of pointed framed embeddings
\[ f: \Wdge_n D^L_+ \to X \]
such that $f_i(0) \in V_i$ for each $i$. We take the subset $\Delta(U)$ to consist of those $f$ such that $f_i(D^L) \subseteq V_i$ for each $i$.

Our strategy for part (1) of the proof is simply to `shrink' a given embedding $f$ until its image discs are contained in the sets $V_i$ as required. The Tube Lemma ensures that this can be done for some nonzero scale factor. More precisely, we set, for $f \in \Theta^{-1}(U)$:
\[ s(f) := \sup\{t \in [0,1] \; | \; \text{$f_i(ty) \in V_i$ for any $y \in D^L$ and $i = 1,\dots,n$} \}. \]
The number $s(f) \in [0,1]$ is well-defined since $f_i(0) \in V_i$ for each $i$ and the Tube Lemma implies that $s(f) > 0$. We use the construction of $s(f)$ to define a homotopy
\[ h: \Theta^{-1}(U) \smsh [0,1]_+ \to \Theta^{-1}(U) \]
by
\[ h(f,t)_i(y) := \begin{cases}
  f_i(ty) & \text{if $t \geq s(f)/2$}; \\
  f_i\left(\frac{s(f)}{2}y\right) & \text{if $t < s(f)/2$}.
\end{cases} \]
Then $h(f,1) = f$ and $h(f,0) \in \Delta(U)$ for each $f \in \Theta^{-1}(U)$. For part (1) of the proof we just need to show that $h$ is continuous, for which it is sufficient to show that $s: \Theta^{-1}(U) \to (0,1]$ is a continuous function. To prove this, we have to understand the topology on $\Theta^{-1}(U)$.

Recall the definition of the compact-open topology on $\Emb^f_*(\Wdge_n D^L_+,X)$. A basis for this topology is given by sets of the form
\[ O(K_1,W_1) \cap \dots \cap O(K_r,W_r) \]
for compact $K_1,\dots,K_r \subseteq \Wdge_n D^L_+$ and open $W_1,\dots,W_r \subseteq X$, where $O(K,W)$ consists of those $f$ such that $f(K) \subseteq W$.

Now suppose that $0 < s(f) < 1$ and take $\epsilon > 0$ such that $(s(f)-\epsilon,s(f)+\epsilon) \subseteq [0,1]$. First, note that by definition of $s(f)$, we have, for all $i$,
\[ f_i([s(f)-\epsilon]D^L) \subseteq V_i. \]
and also that there is some $j$ and some $y \in D^L$ such that
\[ x' := f_j([s(f)+\epsilon/2]y) \notin V_j. \]
Suppose that $x' \in \bar{V}_j$. Then the embedding $f_j$ must embed some disc around $[s(f)+\epsilon/2]y$ as some disc around $x'$. Since $x' \in \bar{V}_j - V_j$, some point in this disc must be outside of $\bar{V}_j$. We can therefore find some $y' \in D^L$ and some $0 < \epsilon' < \epsilon$ such that
\[ x'' := f_j([s(f)+\epsilon']y') \]
is contained in an embedded disc $D$ that does not intersect $V_j$.

We now observe that $f$ is contained in the open set
\[ O(\{[s(f)+\epsilon']y'\}_{(j)}, D) \cap \bigcap_{i = 1}^{n} O([s(f)-\epsilon]D^L_{(i)}, V_i). \]
Moreover, if $g$ is also in this open set, then
\[ s(f) - \epsilon < s(g) < s(f) + \epsilon. \]
This shows that $s$ is continuous at $f$.

Similarly, if $s(f) = 1$ and $0 < \epsilon < 1$, then there is some open subset of $\Emb^f_*(\Wdge_n D^L_+,X)$ containing $f$ and contained in $s^{-1}((1-\epsilon,1])$. This concludes the proof that $s$ is continuous, and it follows that $h$ is continuous. This completes part (1).

Now, for part (2) of the proof, we show that the space $\Delta(U)$ is contractible. Let $I = \{i \in \un{n} \; | \; V_i = V \in \mathcal{V}\}$. We then have
\[ \Delta(U) \isom \Emb^f_*(\Wdge_{I} D^L_+,V) \times \prod_{i \notin I} \Emb^f(D^L,V_i) \]
where $\Emb^f(D^L,V_i)$ is the space of framed embeddings $D^L \to V_i$. Recall that $V_i$ is an embedded open disc in $X - \{x_0\}$ with some centre $x_i$ and some finite radius $r_i > 0$. This space of embeddings is clearly contractible. (For example, it deformation retracts onto the single embedding with centre $x_i$ and radius $r_i/2$.)

Finally, if $c: V \smsh [0,1]_+ \to V$ is a contraction of $V$ to $x_0$ such that $c_t : V \to V$ is a pointed framed embedding for each $t$, then we define a contraction $c'$ of $\Emb^f_*(\Wdge_{I} D^L_+,V)$ by
\[ c'_t(f) := c_t \circ f. \]
Altogether then, we deduce that $\Delta(U)$ is contractible, which completes part (2).

Combining a contraction of $\Delta(U)$ with the homotopy $h$ defined above, we obtain a contraction of $\Theta^{-1}(U)$ as required. This completes the proof of Lemma~\ref{lem:emb}.
\end{proof}

\begin{remark}
For us, the real significance of Lemma~\ref{lem:emb} is that the configuration spaces $C_i(n,X)$ satisfy a cosheaf property with respect to the covering of $X$ by open subsets that are a disjoint union of embedded discs and an f-contractible neighbourhood of the basepoint. Lemma~\ref{lem:emb} then implies that the embedding spaces $\Emb^f_*(\Wdge_n D^L_+,X)$ satisfy the same property. We use this property in the proof of Proposition~\ref{prop:MX} below.
\end{remark}

We now want to construct a $\K\E_L$-module that models the derivatives of the representable functor $\Sigma^\infty \Hom_{\based}(X,-)$ when $X \in \fMfld^L_*$. It is easier to describe first an $\E_L$-module associated to $X$, from which we get the required $\K\E_L$-module via a form of Koszul duality.

\begin{definition}
We define a functor $\mathbb{M}_\bullet: \fMfld^L_* \to \Mod(\mathbb{E}_L)$ by
\[ \mathbb{M}_X(n) := \mathbb{B}_L(n,J_+) \homsmsh_{J_+ \in \Gamma_L} \Emb^f_*\left( \Wdge_J D^L_+, X \right) \]
where $\mathbb{B}_L$ is as in (\ref{eq:pirashvili-bimod}) and $\homsmsh$ denotes the (derived) enriched homotopy coend over the category $\Gamma_L$ of Definition~\ref{def:Gamma-L}. The symmetric sequence $\mathbb{M}_X$ inherits an $\mathbb{E}_L$-module structure from that on the first variable of $\mathbb{B}_L$, and the functoriality in $X$ arises from that of the embedding space functor $\Emb^f_*(-,-)$.
\end{definition}

\begin{example} \label{ex:MDL}
When $X = D^L_+$ we have an equivalence of $\mathbb{E}_L$-modules
\[ \mathbb{M}_X \homeq \mathbb{B}_L(*,1_+) \isom \mathbb{E}_L. \]
\end{example}

\begin{remark} \label{rem:MX-config}
In Example~\ref{ex:MDL}, we see that $\mathbb{M}_X(n)$ has the homotopy type of a configuration space of points in $X$. This is a general phenomenon. It can be shown that there is a natural weak equivalence
\[ \eta_X: \mathbb{M}_X(n) \weq C_*(n,X) \]
where $C_*(n,X)$ denotes the subspace of $X^{\smsh n}$ consisting of those $n$-tuples of distinct points in $X$, together with the basepoint. The map $\eta_X$ is constructed from the maps
\[ \mathbb{B}_L(n,J_+) \smsh \Emb^f_*\left(\Wdge_J D^L_+,X\right) \to C_*(n,X); \quad (f,g) \mapsto (g(f_1(0)),\dots,g(f_n(0))) \]
where $f \in \mathbb{B}_L(n,J_+)$ is identified with a sequence of embeddings $f_i: D^L \to \coprod_J D^L$.
\end{remark}

\begin{example} \label{ex:SL}
For $L > 0$ and $X = S^L$ we have
\[ \mathbb{M}_{S^L}(n) \homeq \begin{cases}
  S^L & \text{for $n = 1$}; \\
  * & \text{for $n > 1$}.
\end{cases} \]
This follows from the equivalence of Remark~\ref{rem:MX-config}. The case $n = 1$ is immediate and for $n > 1$ it suffices to show that $C_*(n,S^L)$ is contractible. Such a contraction is given by $c: C_*(n,S^L) \smsh [0,1]_+ \to C_*(n,S^L)$ by
\[ c([x_1,\dots,x_n],t) := \begin{cases} [\frac{1}{t}x_1,\dots,\frac{1}{t}x_n] & \text{for $t > 0$}; \\ \quad * & \text{for $t = 0$}; \end{cases} \]
where we write $S^L = \mathbb{R}^L/\{x : |x| \geq 1\}$. For any $n$-tuple $x_1,\dots,x_n$ in $S^L$, at least one of the points $x_i$ is not represented by $0 \in \mathbb{R}^L$ and hence $\frac{1}{t}x_i$ represents the basepoint in $S^L$ for sufficiently small $t$. Thus $c$ is continuous and provides the necessary contraction.
\end{example}

\begin{remark}
For a framed compact $L$-manifold $M$ (with boundary), the based spaces $M_+$ and $M/\partial M$ are connected by Atiyah duality. This appears to correspond to a version of Koszul duality between the $\E_L$-modules $\mathbb{M}_{M_+}$ and $\mathbb{M}_{M/\partial M}$. (Recall from Example~\ref{ex:fmfld} that we can build a locally f-contractible pointed framed manifold that is homotopy equivalent to $M/\partial M$.) For example, notice from Examples~\ref{ex:MDL} and \ref{ex:SL} that $\mathbb{M}_{D^L_+}$ and $\mathbb{M}_{S^L}$ are the free and trivial $\mathbb{E}_L$-modules on one `generator' respectively. This is a module-level (as opposed to algebra-level) version of an observation by Ayala and Francis relating Poincar\'{e} and Koszul duality via factorization homology.
\end{remark}

The key property of the $\mathbb{E}_L$-module $\mathbb{M}_X$ is now given by the following result. Let $\mathbbm{Com}$ denote the commutative operad in based spaces: with $\mathbbm{Com}(n) = S^0$ for all $n$. The operad map $\mathbb{E}_L \to \mathbbm{Com}$ makes $\mathbbm{Com}$ into a left $\mathbb{E}_L$-module and we can then form the two-sided bar construction $\B(\mathbb{M}_X,\mathbb{E}_L,\mathbbm{Com})$.

\begin{proposition} \label{prop:MX}
There is an equivalence of $\mathbbm{Com}$-modules, natural in $X \in \fMfld^L_*$ of the form
\[ \theta_X: \B(\mathbb{M}_X,\mathbb{E}_L,\mathbbm{Com}) \weq X^{\smsh *}. \]
\end{proposition}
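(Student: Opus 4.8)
The plan is to identify both sides of $\theta_X$ with the same homotopy colimit over the covering of $X$ by open sets that are disjoint unions of embedded discs and an f-contractible neighbourhood of $x_0$, exploiting the cosheaf property noted in the Remark after Lemma~\ref{lem:emb}. First I would observe that the smash powers $X^{\smsh n}$, and hence the symmetric sequence $X^{\smsh *}$, can be recovered as a homotopy colimit of the configuration-space-with-repetitions functors $C_1(m,-)$ evaluated on such open sets, since the diagonal maps on $X$ assemble these into a $\mathbbm{Com}$-module. On the other side, unfolding the definition of $\mathbb{M}_X$ and of the bar construction $\B(\mathbb{M}_X,\mathbb{E}_L,\mathbbm{Com})$, and applying Lemma~\ref{lem:emb} to replace $\Emb^f_*(\Wdge_J D^L_+,X)$ by $C_1(|J|,X)$, I would rewrite $\B(\mathbb{M}_X,\mathbb{E}_L,\mathbbm{Com})$ as a homotopy coend over $\Gamma_L$ involving the mapping spaces $\Gamma_L(I_+,J_+)$, the bar construction of $\mathbbm{Com}$ over $\mathbb{E}_L$, and the configuration spaces $C_1(|J|,X)$.

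Second, the combinatorial heart of the argument: for $X = D^L_+$ (or more generally for a single embedded disc, or a single f-contractible neighbourhood of the basepoint), one checks directly that $\theta_{D^L_+}$ is an equivalence. By Example~\ref{ex:MDL} we have $\mathbb{M}_{D^L_+} \homeq \mathbb{E}_L$, so the left side becomes $\B(\mathbb{E}_L,\mathbb{E}_L,\mathbbm{Com}) \homeq \mathbbm{Com}$, and the right side is $(D^L_+)^{\smsh *} \isom (\mathbbm{Com} \text{ as a symmetric sequence})$ since $D^L$ is contractible; a short check identifies the map. The case of an f-contractible $V$ reduces to the basepoint-only configuration, where both sides are trivial. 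Then I would use the cosheaf property to write a general $X$ as the homotopy colimit, over the poset of finite subcovers by such open sets, of these building blocks — using that $C_1(n,-)$ takes these open covers to homotopy colimit diagrams (this is essentially the content of the Remark after Lemma~\ref{lem:emb}) and that $\B(\mathbb{M}_{(-)},\mathbb{E}_L,\mathbbm{Com})$, being built from $\mathbb{M}_{(-)}$ which itself inherits this cosheaf behaviour via Lemma~\ref{lem:emb}, does the same. Since $\theta$ is a natural transformation that is an equivalence on each piece of the cover and both sides convert the covering diagram to a homotopy colimit, $\theta_X$ is an equivalence for all $X \in \fMfld^L_*$.

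The main obstacle I expect is the bookkeeping needed to make the cosheaf/descent argument rigorous in the enriched, operadic setting: one must verify that the homotopy coend defining $\mathbb{M}_X$, together with the bar construction, genuinely commutes with the relevant homotopy colimits over the poset of open covers, and that the local models glue with the correct $\mathbb{E}_L$- and $\mathbbm{Com}$-module structures rather than merely as symmetric sequences. A secondary technical point is controlling basepoints — configuration spaces "with repetitions of the basepoint allowed" behave differently from ordinary configuration spaces under gluing, and one needs the $C_1$ versus $C_*$ distinction (Lemma~\ref{lem:emb} versus Remark~\ref{rem:MX-config}) to line up with the smash versus Cartesian product on the target side. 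I would handle this by first establishing the equivalence at the level of the associated $\Gamma_L$-functors (where Proposition~\ref{prop:pirashvili} gives a clean dictionary) and only then transporting it to $\mathbbm{Com}$-modules, so that the module structure is automatic.
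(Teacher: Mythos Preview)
Your proposal is correct and follows essentially the same route as the paper: verify $\theta_X$ on the basic open pieces and then run a covering/descent argument (the paper invokes Dugger--Isaksen's hypercover lemma explicitly) to pass to general $X$, using Lemma~\ref{lem:emb} to reduce the left-hand side to configuration spaces. The only notable difference is in the base case: the paper takes $\Wdge_I D^L_+$ (and then $\Wdge_I D^L_+ \wdge K$ for f-contractible $K$) and checks $\theta$ there by a direct combinatorial identification $\B(\Wdge_{K\subseteq I}\un{\mathbb{E}}_L(-,K),\mathbb{E}_L,\mathbbm{Com}) \isom \Wdge_{K\subseteq I}\un{\mathbbm{Com}}(-,K) \isom (I_+)^{\smsh *}$, whereas you reduce to a single disc and an f-contractible piece separately; the paper's choice avoids having to argue separately that the local equivalences are compatible under wedge sums before gluing. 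On the target side the paper handles the non-openness of $\mathring{U}^{\smsh n}\subseteq X^{\smsh n}$ exactly as you anticipate, by first proving descent for the Cartesian powers $U^k$ and then passing to smash powers via the total homotopy cofibre.
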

\begin{proof}
The map $\theta_X$ is built from a map $\mathbb{M}_X \circ \mathbbm{Com} \to X^{\smsh *}$ which has components, associated to a surjection $\alpha: \un{n} \epi \un{k}$, of the form
\[ \mathbb{M}_X(k) \to X^{\smsh n} \]
given by the composite
\[ \mathbb{M}_X(k) \arrow{e,t}{\eta_X} C_*(k,X) \to X^{\smsh k} \arrow{e,t}{\Delta_{\alpha}} X^{\smsh n} \]
where $\eta_X$ is as in Remark~\ref{rem:MX-config}.

To prove that $\theta_X$ is an equivalence, we start with the case $X = \Wdge_I D^L_+$, for some finite set $I$. We then have
\[ \B(\mathbb{M}_X,\mathbb{E}_L,\mathbbm{Com})(n) \homeq \B(\Wdge_{K \subseteq I} \un{\mathbb{E}}_L(n,K), \mathbb{E}_L,\mathbbm{Com}) \isom \Wdge_{K \subseteq I} \un{\mathbbm{Com}}(n,K). \]
We can identify $\un{\mathbbm{Com}}(n,K)$ with the subset of $(I_+)^{\smsh n}$ consisting of $n$-tuples in $I$ whose union is $K$. Taking the sum over all subsets $K$ of $I$, we get an equivalence
\[ \B(\mathbb{M}_X,\mathbb{E}_L,\mathbbm{Com})(n) \weq (I_+)^{\smsh n}. \]
The map $\theta_X$ factors this via the equivalence
\[ X^{\smsh n} \to (I_+)^{\smsh n} \]
given by contracting each component of $X$ to a single point. It follows that $\theta_X$ is an equivalence in this case.

Next consider the case that $X = \Wdge_I D^L_+ \wdge K$ for some Hausdorff f-contractible pointed framed manifold $K$. (So $X$ consists of $K$ together with a disjoint union of discs.) It follows that $X$ is homotopy equivalent to $X' :=\Wdge_I D^L_+$ (with the relevant homotopies consisting of pointed framed embeddings). This equivalence induces corresponding equivalences of $\mathbbm{Com}$-modules
\[ \B(\mathbb{M}_X,\mathbb{E}_L,\mathbbm{Com})(n) \homeq \B(\mathbb{M}_{X'},\mathbb{E}_L,\mathbbm{Com})(n); \quad X^{\smsh n} \homeq X'^{\smsh n} \]
and it follows by the previous case that $\theta_{X}$ is an equivalence.

For an arbitrary $X \in \fMfld^L_*$ we now let $\mathcal{P}$ denote the poset of subsets of $X$ that are of the form (i.e. isomorphic as pointed framed manifolds) $\Wdge_I D^L_+ \wdge K$ for f-contractible $K$. Consider the following commutative diagram:
\begin{equation} \label{eq:diag-hocolim} \begin{diagram}
  \node{\hocolim_{U \in \mathcal{P}} \B(\mathbb{M}_U,\mathbb{E}_L,\mathbbm{Com})} \arrow{e} \arrow{s,l}{\sim} \node{\B(\mathbb{M}_X,\mathbb{E}_L,\mathbbm{Com})} \arrow{s,r}{\theta_X} \\
  \node{\hocolim_{U \in \mathcal{P}} U^{\smsh *}} \arrow{e} \node{X^{\smsh *}}
\end{diagram} \end{equation}
Since the left-hand vertical map is an equivalence by the previous case, it is sufficient to show that the horizontal maps are weak equivalences.

The bar construction $\B(\M_X,\E_L,\Com)$ is built from the embedding functor $\Emb^f_*(\Wdge_n D^L_+,-)$ by taking various homotopy colimits and so it is sufficient to prove that
\[ \hocolim_{U \in \mathcal{P}} \Emb^f_*(\Wdge_n D^L_+,U) \to \Emb^f_*(\Wdge_n D^L_+,X) \]
is an equivalence, for which, by Lemma~\ref{lem:emb}, it is enough to show that
\begin{equation} \label{eq:hocolim-C1} \hocolim_{U \in \mathcal{P}} C_1(n,U) \to C_1(n,X) \end{equation}
is an equivalence for any $n \geq 0$.

We prove this using the following result of Dugger-Isaksen~\cite[1.6]{dugger/isaksen:2004}:

\begin{lemma}[Dugger/Isaksen] \label{lem:dugger/isaksen}
Let $\mathscr{U}$ be an open cover of a space $\mathcal{X}$ such that each finite intersection of elements of $\mathscr{U}$ is covered by other elements of $\mathscr{U}$. Then the homotopy colimit of the diagram formed by the elements of $\mathscr{U}$ and the inclusions between them is weakly equivalent to $\mathcal{X}$.
\end{lemma}

For $U \in \mathcal{P}$, let $\mathring{U}$ denote the interior of $U$ in $X$, i.e. the disjoint union of a collection of open discs in $X$ with an f-contractible neighbourhood of the basepoint. We apply Lemma~\ref{lem:dugger/isaksen} to the open cover
\[ \mathscr{U} := \{C_1(n,\mathring{U}) \subseteq C_1(n,X) \; | \; U \in \mathcal{P}\}. \]
This collection covers $C_1(n,X)$ because each $n$-tuple of points in $X$ is contained in some $U \in \mathcal{P}$. Consider a point $x = (x_1,\dots,x_n)$ in the finite intersection:
\[ C_1(n,\mathring{U}_1) \cap \dots \cap C_1(n,\mathring{U}_r). \]
Then each $x_i$ not equal to $x_0$ is the center of some closed disc $D_i$ contained in the intersection $\mathring{U}_1 \cap \dots \cap \mathring{U}_r$. Then $x$ is an element of
\[ C_1(n,\mathring{U}) \]
where $U = \Wdge_{i: x_i \neq x_0} (D_i)_+ \wdge K$ for some f-contractible $K$ contained in each $U_i$ that does not intersect any $D_i$. (Such a $K$ exists since $x_0$ has a basis of f-contractible neighbourhoods.) It follows that the open cover $\mathscr{U}$ satisfies the hypotheses of Lemma~\ref{lem:dugger/isaksen} and we deduce that the map
\[ \hocolim_{U \in \mathcal{P}} C_1(n,\mathring{U}) \to C_1(n,X) \]
is a weak equivalence. Finally, note that each inclusion $C_1(n,\mathring{U}) \to C_1(n,U)$ is a weak equivalence, from which we deduce that the map (\ref{eq:hocolim-C1}) is an equivalence.

We now turn to the bottom horizontal map in (\ref{eq:diag-hocolim}). We cannot directly apply Lemma~\ref{lem:dugger/isaksen} in the same way here because the subsets $\mathring{U}^{\smsh n} \subseteq X^{\smsh n}$ are typically not open. Instead, we apply \ref{lem:dugger/isaksen} to show that the maps
\[ \hocolim_{U \in \mathcal{P}} U^k \to X^k \]
are equivalences (where $U^k$ is the cartesian product of $k$ copies of $U$). Combining this with the natural equivalences
\[ X^{\smsh n} \homeq \thocofib_{I \subseteq \un{n}} X^I, \]
and using the commutativity of homotopy colimits, we deduce that the bottom horizontal map in (\ref{eq:diag-hocolim}) is an equivalence. This completes the proof that $\theta_X$ is an equivalence for arbitrary $X$.
\end{proof}

\begin{remark}
Taking $X = S^L$ in Proposition~\ref{prop:MX} and recalling from Example~\ref{ex:SL} that $\mathbb{M}_{S^L}$ is trivial beyond the first term, notice that we get an equivalence of $\mathbbm{Com}$-modules
\[ \Sigma^L \B(\mathbbm{1},\mathbb{E}_L,\mathbbm{Com}) \homeq (S^L)^{\smsh *}. \]
This provides a different proof of \cite[3.31]{arone/ching:2014b}, which played a key role in the calculation of the comonad that classifies Taylor towers of functors from based spaces to spectra.
\end{remark}

\begin{definition}
For $X \in \fMfld^L_*$ we set
\[ \M_X := \Sigma^\infty \mathbb{M}_X. \]
Then $\M_X$ is an $\E_L$-module and Proposition~\ref{prop:MX} implies that we have an equivalence of $\Com$-modules
\[ \B(\M_X,\E_L,\Com) \weq \Sigma^\infty X^{\smsh *}. \]
\end{definition}

The derivatives of the representable functor $R_X := \Sigma^\infty \Hom_{\based}(X,-)$ for $X \in \fMfld^L_*$ are now given by applying a form of Koszul duality to the $\E_L$-module $\M_X$. Recall from (\ref{eq:BL}) that $\B_L$ is a bisymmetric sequence that forms an $\E_L$-module in one variable and a $\K\E_L$-module in the other variable. In particular, the (derived) mapping objects $\Map_{\E_L}(\M_X,\B_L)$ inherit a $\K\E_L$-module structure from that on $\B_L$.

\begin{proposition} \label{prop:rep}
For $X \in \fMfld^L_*$, there is an equivalence of $\C$-coalgebras, natural in $X$:
\[ \der_*(R_X) \homeq \Map_{\E_L}(\M_X,\B_L). \]
In other words, the Taylor tower of $R_X$ is determined by a $\K\E_L$-module structure on its derivatives.
\end{proposition}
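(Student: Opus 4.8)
The plan is to identify $R_X$ with the functor corepresented by the commutative coalgebra $\Sigma^\infty X^{\smsh *}$, then feed in Proposition~\ref{prop:MX}, and finally run the resulting expression through the bar/cobar machinery of Section~\ref{sec:KE_L}.

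First I would record the Dwyer--Rezk natural equivalence
\[ R_X(Y) = \Sigma^\infty\Hom_{\based}(X,Y) \;\homeq\; \widetilde{\Map}_{\Com}\bigl(\Sigma^\infty X^{\smsh *},\ \Sigma^\infty Y^{\smsh *}\bigr), \]
which exhibits $R_X$ as the functor corepresented by the right $\Com$-module $\Sigma^\infty X^{\smsh *}$ (for $X = S^0$ both sides are $\Sigma^\infty Y$, and the general statement is a form of the commutative-coalgebra classification of representable functors from \cite{arone/ching:2014b}; crucially, no polynomiality of $R_X$ is needed). Applying $\widetilde{\Map}_{\Com}(-,\Sigma^\infty Y^{\smsh *})$ to the equivalence of $\Com$-modules $\B(\M_X,\E_L,\Com)\weq\Sigma^\infty X^{\smsh *}$ of Proposition~\ref{prop:MX} yields a natural equivalence
\[ R_X(Y)\;\homeq\;\widetilde{\Map}_{\Com}\bigl(\B(\M_X,\E_L,\Com),\ \Sigma^\infty Y^{\smsh *}\bigr). \]
Next I would use the adjunction between the two-sided bar construction relative to $\E_L$ and $\Com$-linear mapping objects to move $\M_X$ outside, obtaining $R_X(Y)\homeq\widetilde{\Map}_{\E_L}(\M_X,\Xi(Y))$ where $\Xi(Y)$ is an $\E_L$-module assembled by cobar from $\E_L$ and $\Sigma^\infty Y^{\smsh *}$; by the definitions of $\B_L$ in (\ref{eq:BL}) (built from the two-sided bar construction $\B(\one,\E_L,\E_L)$) and of $\K\E_L$ as the dual of the bar construction of $\E_L$, this identifies $R_X$ with the functor associated to the $\K\E_L$-module $\widetilde{\Map}_{\E_L}(\M_X,\B_L)$ via diagram (\ref{eq:KE-mod-functor-big}). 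Since $\der_*$ recovers the $\K\E_L$-module from the functor it builds --- this is the commutativity of (\ref{eq:KE-mod-functor-big}) together with the fact that $\Q$ and $\L$ induce equivalences (Theorem~\ref{thm:KE-mod-functor}), now applied to this possibly non-polynomial functor rather than to $R_X$ directly --- one concludes $\der_*(R_X)\homeq\widetilde{\Map}_{\E_L}(\M_X,\B_L)$ as $\C$-coalgebras, with the $\C$-coaction visibly factoring through $\C_L$. Naturality in $X$ is automatic because $\M_X$ is functorial in $X\in\fMfld^L_*$.

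The main obstacle lies in upgrading the chain of equivalences to an equivalence of $\C$-coalgebras rather than of bare symmetric sequences. Two points require care. First, $R_X$ is not polynomial, so $\der_*$ is not controlled by a bounded $\C$-coalgebra and Proposition~\ref{prop:koszul-EL} does not apply on the nose; one handles this by filtering $\M_X$ by its truncations $(\M_X)_{\leq n}$, which are bounded, and passing to the limit, using that each term $\M_X(n)$ is a finite spectrum (by Remark~\ref{rem:MX-config}, $\mathbb{M}_X(n)$ is a configuration space of the finite complex $X$) so that the relevant duals are well behaved, and using that the Pirashvili equivalence of Proposition~\ref{prop:pirashvili} holds with no boundedness restriction. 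Second, one must verify that the bar/cobar adjunction and the identification of $\Xi(Y)$ with the $\B_L$-construction are compatible with \emph{all} of the operadic structure in play --- in particular the simultaneous $\E_L$-module and $\K\E_L$-module structures on $\B_L$ --- so that the output is genuinely an equivalence of $\K\E_L$-modules; this is the technical heart of the argument, and it parallels, and reuses, the proof of Theorem~\ref{thm:KE-mod-functor}.
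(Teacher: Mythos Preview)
Your outline follows the same route as the paper: start from the corepresentability formula for $P_nR_X$ (the paper cites \cite{arone:1999}), feed in Proposition~\ref{prop:MX} to pass from $\Com$ to $\E_L$, and then recognize the result as a functor of the form appearing in Theorem~\ref{thm:KE-mod-functor}(2). The paper also truncates, working with $P_nR_X$ throughout rather than with $R_X$ itself, so your remark about filtering by $(\M_X)_{\leq n}$ is exactly what happens.

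Where your sketch is too vague is at the step you label ``bar/cobar adjunction \dots identifies $R_X$ with the functor associated to the $\K\E_L$-module $\widetilde{\Map}_{\E_L}(\M_X,\B_L)$.'' The paper makes this concrete: one must exhibit an explicit $\E_L$-\emph{comodule} $\N=\Map_{(\E_L)_{\leq n}}(\M_X,\un{\E}_L)$ and prove that the canonical map
\[
\N \smsh_{\E_L} \M' \;\longrightarrow\; \Map_{(\E_L)_{\leq n}}(\M_X,\M')
\]
is an equivalence for any cofibrant $\E_L$-module $\M'$ (taking $\M'=\widetilde{\Sigma^\infty Y^{\smsh *}}$ gives condition~(2) of Theorem~\ref{thm:KE-mod-functor}, and taking $\M'=\tilde{\B}_L$ identifies the resulting $\K\E_L$-module with $\Map_{\E_L}(\M_X,\B_L)$). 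This is the point where a finiteness hypothesis enters, and it is not quite the one you name: what is needed is that $\B(\M_X,\E_L,\one)(n)\homeq\Sigma^\infty X^{\smsh n}/\Delta^nX$ is a \emph{finite free} $\Sigma_n$-spectrum, so that homotopy fixed points may be replaced by homotopy orbits and thus commuted past the coend $-\smsh_{\E_L}\M'$. Finiteness of $\M_X(n)$ alone, without the $\Sigma_n$-freeness of the derived indecomposables, would not suffice for this exchange.
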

\begin{proof}
First recall from work of the first author \cite{arone:1999} that the Taylor tower of $R_X$ is given by
\[ (P_nR_X)(Y) \homeq \Map_{\Com_{\leq n}}(\Sigma^\infty X^{\smsh *},\Sigma^\infty Y^{\smsh *}). \]
By Proposition~\ref{prop:MX}, we can rewrite this as
\[ (P_nR_X)(Y) \homeq \Map_{(\E_L)_{\leq n}}(\M_X,\Sigma^\infty Y^{\smsh *}). \]
We now prove that the canonical map
\[ \tag{*} \Map_{(\E_L)_{\leq n}}(\M_X,\un{\E}_L) \smsh_{\E_L} \widetilde{\Sigma^\infty Y^{\smsh *}} \to \Map_{(\E_L)_{\leq n}}(\M_X,\widetilde{\Sigma^\infty Y^{\smsh *}}) \]
is an equivalence. It then follows from Theorem~\ref{thm:KE-mod-functor} that the derivatives of $P_nR_X$ have a $\K\E_L$-module structure, and that this $\K\E_L$-module is given by
\[ \Map_{(\E_L)_{\leq n}}(\M_X,\un{\E}_L) \smsh_{\E_L} \tilde{\B}_L. \]
We then claim also that there is an equivalence
\[ \tag{**} \Map_{(\E_L)_{\leq n}}(\M_X,\un{\E}_L) \smsh_{\E_L} \tilde{\B}_L \weq \Map_{(\E_L)_{\leq n}}(\M_X,\B_L). \]
The right-hand side is equivalent, as a $\K\E_L$-module, to the $n$-truncation of $\Map_{\E_L}(\M_X,\B_L)$ and we deduce therefore that the derivatives of $R_X$ are as claimed.

It remains to prove (*) and (**), and these follow from a more general result: for any cofibrant $\E_L$-module $\M'$, the map
\begin{equation} \label{eq:MX-eq} \Map_{(\E_L)_{\leq n}}(\M_X,\un{\E}_L) \smsh_{\E_L} \M' \to \Map_{(\E_L)_{\leq n}}(\M_X,\M') \end{equation}
is an equivalence. We prove this by induction on $n$ using the diagram
\[ \begin{diagram}
  \node{\Map_{\E_L}(\M_X,(\un{\E}_L)_{=n}) \smsh_{\E_L} \M'} \arrow{e} \arrow{s}
    \node{\Map_{\E_L}(\M_X,(\M')_{=n})} \arrow{s} \\
  \node{\Map_{\E_L}(\M_X,(\un{\E}_L)_{\leq n}) \smsh_{\E_L} \M'} \arrow{e} \arrow{s}
    \node{\Map_{\E_L}(\M_X,(\M')_{\leq n})} \arrow{s} \\
  \node{\Map_{\E_L}(\M_X,(\un{\E}_L)_{\leq (n-1)}) \smsh_{\E_L} \M'} \arrow{e}
    \node{\Map_{\E_L}(\M_X,(\M')_{\leq (n-1)})}
\end{diagram} \]
in which the vertical maps form fibre sequences. It is sufficient then to show that the top horizontal map here is an equivalence for all $n$. We can rewrite this map as
\begin{equation} \label{eq:hfp} \Map(\B(\M_X,\E_L,\one)(n),\un{\E}_L(n,-))^{\Sigma_n} \smsh_{\E_L} \M' \to \Map(\B(\M_X,\E_L,\one)(n),\M'(n))^{\Sigma_n}. \end{equation}
The key observation now is that the $\Sigma_n$-spectrum $\B(\M_X,\E_L,\one)(n)$ is built from finitely-many free $\Sigma_n$-cells. To see this we note that by Proposition~\ref{prop:MX} we have
\[ \B(\M_X,\E_L,\one)(n) \homeq \B(\Sigma^\infty X^{\smsh *},\Com,\one)(n) \homeq \Sigma^\infty X^{\smsh n}/\Delta^nX \]
where $\Delta^nX$ denotes the `fat diagonal' inside $X^{\smsh n}$. Since $X$ is homotopy equivalent to a based finite complex, this $\Sigma_n$-spectrum is equivalent to the suspension spectrum of a finite complex of free $\Sigma_n$-cells. It follows that in (\ref{eq:hfp}) we can replace the homotopy fixed points with homotopy orbits. These now commute with the coend and we see that the resulting map is an equivalence. This completes the proof.
\end{proof}

\begin{remark}
We stress that the $\K\E_L$-module structure on $\der_*(R_X)$ for a based space $(X,x_0)$ depends on a choice of smooth structure and framing on $X - \{x_0\}$. Different choices correspond to potentially non-equivalent $\K\E_L$-modules that yield equivalent $\C$-coalgebra structures on those derivatives.
\end{remark}

\begin{definition}
Let $G: \fMfld^L_* \to \spectra$ be a pointed simplicially-enriched functor. We define
\[ \d(G) := G(Y) \smsh_{Y \in \fMfld^L_*} \Map_{\E_L}(\M_Y,\B_{L}) \]
with $\K\E_L$-module structure on $\d(G)$ induced by that on $\B_{L}$.
\end{definition}

\begin{theorem} \label{thm:fmfld}
Let $F: \finbased \to \spectra$ be a functor that is the enriched (homotopy) left Kan extension of a pointed simplicially-enriched functor $G: \fMfld^L_* \to \spectra$ along the forgetful functor $\fMfld^L_* \to \finbased$. Then the derivatives of $F$ have a $\K\E_L$-module structure that classifies the Taylor tower. Conversely, if $F: \finbased \to \spectra$ is polynomial and the derivatives of $F$ have a $\K\E_L$-module structure, then $F$ is the Kan extension of such a $G$.
\end{theorem}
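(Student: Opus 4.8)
The plan is to deduce Theorem~\ref{thm:fmfld} from Theorem~\ref{thm:KE-mod-functor} by exhibiting the left Kan extension of $G: \fMfld^L_* \to \spectra$ as (equivalent to) a construction that factors through one of the three equivalent conditions of that theorem. The natural route is via condition~(2): I would show that for $G$ as in the hypothesis, the homotopy left Kan extension $F$ along $U: \fMfld^L_* \to \finbased$ agrees with $\N \homsmsh_{\E_L} \Sigma^\infty X^{\smsh *}$ for a suitable bounded $\E_L$-comodule $\N$ built from $G$. The key input is Proposition~\ref{prop:MX}, which identifies $\Sigma^\infty X^{\smsh *}$ with $\B(\M_X, \E_L, \Com)$ naturally in $X \in \fMfld^L_*$, and Proposition~\ref{prop:rep}, which identifies $\der_*(R_X)$ with the $\K\E_L$-module $\Map_{\E_L}(\M_X, \B_L)$.

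Concretely, first I would unwind the homotopy left Kan extension as the coend
\[ F(X) \homeq G(Y) \smsh_{Y \in \fMfld^L_*} \Sigma^\infty \Hom_{\based}(Y, X) = G(Y) \smsh_{Y \in \fMfld^L_*} R_Y(X). \]
Using Proposition~\ref{prop:rep} (and the reconstruction formula in part~(1) of Theorem~\ref{thm:KE-mod-functor}), each $R_Y$ is reconstructed from the $\K\E_L$-module $\Map_{\E_L}(\M_Y, \B_L)$; since reconstruction from a $\C$-coalgebra is a right adjoint it does not obviously commute with the coend over $Y$, so instead I would pass to the $\E_L$-comodule side. Define $\N := G(Y) \smsh_{Y \in \fMfld^L_*} \M_Y$ — wait, this has the wrong variance ($\M_Y$ is an $\E_L$-\emph{module}); rather, one should use the Koszul-dual comodule, so set $\N := \X(\d(G))$ or more directly the $\E_L$-comodule corepresented by the functor $G$ via the $\mathbb{M}$-construction. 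The cleanest formulation: $\d(G)$ as in the Definition preceding the theorem is a $\K\E_L$-module, and I would show (a) that $\d(G)$ is bounded when $G$ is suitably finitary/truncated, and hence corresponds under Proposition~\ref{prop:koszul-EL} to a bounded $\E_L$-comodule $\N$, and (b) that $\N \homsmsh_{\E_L} \Sigma^\infty X^{\smsh *} \homeq F(X)$. Step (b) reduces, via Proposition~\ref{prop:MX}, to a Fubini-type interchange: $\N \homsmsh_{\E_L} \B(\M_X, \E_L, \Com)$ rearranges into $G(Y) \smsh_{Y} \big( \text{something representable} \big)$, and the co-Yoneda lemma over $\fMfld^L_*$ collapses this to $G(X)$-fed-into-$R_X$, giving $F(X)$.

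The main obstacle I anticipate is the boundedness/convergence bookkeeping, not the formal interchange. Theorem~\ref{thm:KE-mod-functor} applies to \emph{bounded} $\E_L$-comodules / polynomial functors, whereas $G: \fMfld^L_* \to \spectra$ and its Kan extension $F$ need only be finitary, so the forward direction ("derivatives of $F$ are a $\K\E_L$-module") should be read as asserting a $\K\E_L$-module structure compatible with each truncation $P_n F$, obtained by passing to the filtered colimit of the bounded statements. Thus I would first prove the result for $P_n F$ — which is polynomial, hence covered by Theorem~\ref{thm:KE-mod-functor} once one checks $P_n F$ is the Kan extension of the composite of $G$ with $P_n$ (using that $P_n$ and homotopy left Kan extension along $U$ commute, since $U$ preserves the relevant homotopy-cocartesian cubes) — and then take $\hocolim_n$. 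The converse ("if $F$ is polynomial and $\der_* F$ is a $\K\E_L$-module then $F = \mathsf{hLKan}(G)$") is immediate from Theorem~\ref{thm:KE-mod-functor}(3) together with the observation that $\Gamma_L$ is a full subcategory of $\fMfld^L_*$ (stated in the Examples above), so one takes $G$ to be the left Kan extension along $\Gamma_L \hookrightarrow \fMfld^L_*$ of the functor on $\Gamma_L$ supplied by part~(3); by transitivity of Kan extensions this $G$ has $\mathsf{hLKan}(G) \homeq F$. The remaining care is to check this $G$ lands in the \emph{full} subcategory $\fMfld^L_*$ and is pointed and finitary, which follows since $G$ is built from the finitary data on $\Gamma_L^{\leq n}$.
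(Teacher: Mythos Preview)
Your converse is exactly right and matches the paper: since $\Gamma_L$ is a full subcategory of $\fMfld^L_*$, Theorem~\ref{thm:KE-mod-functor}(3) supplies a functor on $\Gamma_L$ whose left Kan extension to $\fMfld^L_*$ (and then on to $\finbased$) recovers $F$ by transitivity.

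For the forward direction, however, you are making it much harder than it needs to be. You correctly write
\[
F(X) \homeq G(Y) \smsh_{Y \in \fMfld^L_*} R_Y(X),
\]
and you correctly identify $\d(G) = G(Y) \smsh_{Y} \Map_{\E_L}(\M_Y,\B_L)$ as a $\K\E_L$-module. But you then detour: you worry that reconstruction is a right adjoint and won't commute with the coend, and so try to pass to the $\E_L$-comodule side, invoke Koszul duality (Proposition~\ref{prop:koszul-EL}), and verify condition~(2) of Theorem~\ref{thm:KE-mod-functor} --- which forces you into boundedness bookkeeping, truncations, and the claim that $P_n$ commutes with left Kan extension along $U$.

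None of that is needed. The paper's argument is one line: for spectrum-valued functors, \emph{taking derivatives commutes with homotopy colimits} (as $\C$-coalgebras). Applying $\der_*$ directly to the coend above gives
\[
\der_*F \;\homeq\; G(Y) \smsh_{Y \in \fMfld^L_*} \der_*(R_Y)
\;\homeq\; G(Y) \smsh_{Y \in \fMfld^L_*} \Map_{\E_L}(\M_Y,\B_L) \;=\; \d(G),
\]
the second equivalence being Proposition~\ref{prop:rep}, natural in $Y$. This is already an equivalence of $\C$-coalgebras whose right-hand side carries a $\K\E_L$-module structure, and that is precisely the conclusion. There is no need to reconstruct $F$ from anything, no need to pass through $\E_L$-comodules, and no boundedness hypothesis enters the forward direction at all.
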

\begin{proof}
Suppose that $F$ is the homotopy left Kan extension of $G: \fMfld^L_* \to \spectra$, i.e. we have
\[ F(Y) \homeq G(Y) \smsh_{Y \in \fMfld^L_*} \Sigma^\infty \Hom_{\based}(Y,-). \]
Taking derivatives, which commutes with homotopy colimits for spectrum-valued functors, we get an equivalence of $\C$-coalgebras
\[ \der_*(F) \homeq G(Y) \smsh_{Y \in \fMfld^L_*} \der_*(R_Y) \homeq G(Y) \smsh_{Y \in \fMfld^L_*} \Map_{\E_L}(\mathbb{M}_Y,\B_{L}) = \d(G) \]
which has a $\K\E_L$-module structure as required.

The converse follows from Theorem~\ref{thm:KE-mod-functor} since $\Gamma_L$ can be identified with a full subcategory of $\fMfld^L_*$.
\end{proof}

\begin{example}
Using the description of $\mathbb{M}_{S^L}$ in Example~\ref{ex:SL}, we see that the Taylor tower of the representable functor $R_{S^L} = \Sigma^\infty \Omega^L(-)$ is determined by the $\K\E_L$-module
\[ \Sigma^{-L}\Map_{\E_L}(\one,\B_{L}). \]
\end{example}

\begin{example} \label{ex:L-mfld}
Let $M$ be any compact parallelizable $L$-dimensional manifold possibly with boundary. Then the derivatives of the functors $\Sigma^\infty \Hom_{\based}(M_+,-)$ and $\Sigma^\infty \Hom_{\based}(M/\partial M,-)$ have a $\K\E_L$-module structure. If we give $M$ a basepoint, the same is true of $\Sigma^\infty \Hom_{\based}(M,-)$.
\end{example}

We finish this section with a conjecture about one way in which the existence of a $\K\E_L$-module and $\K\E_{L-1}$-module structures on the derivatives of functors might be related.

\begin{conjecture} \label{conj:splitting}
Let $\Sigma: \based \to \based$ denote the usual reduced suspension functor. Suppose the derivatives of $F: \finbased \to \spectra$ have a $\K\E_L$-module structure. Then the derivatives of $F\Sigma$ have a $\K\E_{L-1}$-module structure.
\end{conjecture}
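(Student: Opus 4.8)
The plan is to establish characterization~(2) of Theorem~\ref{thm:KE-mod-functor} for $F\Sigma$. We may assume $F$ is polynomial; then $F\Sigma$ is polynomial as well, since the reduced suspension $\Sigma$, being a left adjoint, preserves strongly homotopy cocartesian cubes, so $F\Sigma$ is $n$-excisive whenever $F$ is. Hence, by Theorem~\ref{thm:KE-mod-functor} applied with $L-1$, it is enough to produce an $\E_{L-1}$-comodule $\N'$ together with a natural equivalence $F\Sigma(X)\homeq\N'\homsmsh_{\E_{L-1}}\Sigma^\infty X^{\smsh *}$; truncating $\N'$ at the excisive degree of $F\Sigma$ then endows the derivatives of $F\Sigma$ with a $\K\E_{L-1}$-module structure classifying its Taylor tower. (The case of non-polynomial $F$ reduces to this one by restriction to Taylor polynomials.)

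By hypothesis and Theorem~\ref{thm:KE-mod-functor} there is a bounded $\E_L$-comodule $\N$ with $F(X)\homeq\N\homsmsh_{\E_L}\Sigma^\infty X^{\smsh *}$, and hence $F\Sigma(X)\homeq\N\homsmsh_{\E_L}\Sigma^\infty(\Sigma X)^{\smsh *}$. The first step is the identification, as $\Com$-modules,
\[ \Sigma^\infty(\Sigma X)^{\smsh *}\isom\un{\sphere^\rho}\smsh\Sigma^\infty X^{\smsh *}, \]
where $\un{\sphere^\rho}$ is the symmetric sequence with $\un{\sphere^\rho}(n)=\Sigma^\infty S^n$, with $\Sigma_n$ acting through the permutation representation on $\mathbb{R}^n$, given its $\Com$-module structure by the reduced diagonal of $S^1$, and where $\smsh$ is the levelwise smash of $\Com$-modules (a $\Com$-module since $\Com\smsh\Com\isom\Com$). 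This follows from the homeomorphism $(\Sigma X)^{\smsh n}\isom(S^1)^{\smsh n}\smsh X^{\smsh n}$ carrying the reduced diagonal of $\Sigma X$ to the componentwise smash of the reduced diagonals of $S^1$ and of $X$. Restricting along $\E_L\to\Com$ gives $F\Sigma(X)\homeq\N\homsmsh_{\E_L}\bigl(\un{\sphere^\rho}\smsh\Sigma^\infty X^{\smsh *}\bigr)$.

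The heart of the argument is then to convert this $\E_L$-coend into an $\E_{L-1}$-coend. The underlying expectation is that smashing with $\un{\sphere^\rho}$ ``undoes one disc dimension'': the operads $\E_L$ and $\E_{L-1}$ differ by an operadic (de)suspension $\Lambda$ (so that $\K\E_L\homeq\Lambda\K\E_{L-1}$, by compatibility of Koszul duality with operadic suspension), and $\un{\sphere^\rho}$ is, levelwise, $\Sigma$ of such a (de)suspension of the commutative operad --- concretely $\un{\sphere^\rho}(n)=S^{\rho_n}=S^1\smsh S^{\bar\rho_n}$ with $\bar\rho_n$ the reduced permutation representation, and the shift by $S^{\bar\rho_n}$ is exactly the difference between $\K\E_L(n)$ and $\K\E_{L-1}(n)$. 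Using Dunn additivity to write $\E_L\homeq\E_1\otimes\E_{L-1}$ and factoring the $\E_L$-coend accordingly, I would construct from $\N$ an $\E_{L-1}$-comodule $\N'$ --- a partial Koszul dualization of $\N$ in the $\E_1$-direction, paired against $\un{\sphere^\rho}\homeq\Sigma\B(\one,\E_1,\Com)$ as in the Remark following Proposition~\ref{prop:MX} with $L=1$ --- together with a natural equivalence $\N\homsmsh_{\E_L}\bigl(\un{\sphere^\rho}\smsh\M\bigr)\homeq\N'\homsmsh_{\E_{L-1}}\M$ for $\Com$-modules $\M$ of the form $\Sigma^\infty X^{\smsh *}$. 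Taking $\M=\Sigma^\infty X^{\smsh *}$ then yields the desired formula for $F\Sigma$.

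The main obstacle --- and the reason this remains conjectural --- is making that last paragraph precise: one must check that Dunn additivity, operadic (de)suspension, and the associated factorization of coends are all compatible with the \emph{derived} coends and with the specific cofibrant models of $\E_L$, $\K\E_L$ and the bimodules $\B_L$ used in this paper, while tracking coherently the various suspensions and representation-sphere twists. A possible alternative is to argue on the representable side: verify via Proposition~\ref{prop:rep} that $\der_*(R_X\Sigma)$ carries a $\K\E_{L-1}$-module structure for $X$ in a generating class, such as spheres and wedges of discs, and then propagate along homotopy left Kan extensions as in the proof of Theorem~\ref{thm:fmfld}. Even this, however, must start from the $\Com$-module identification of $\Sigma^\infty(\Sigma X)^{\smsh *}$ above, since $R_X\Sigma$ is not itself representable.
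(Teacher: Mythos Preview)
The paper does not prove this statement: it is explicitly labeled a \emph{conjecture}, and the only supporting discussion is the paragraph citing B\"odigheimer's stable splittings as evidence. So there is no proof in the paper to compare against; your proposal is an attempt to go beyond what the paper establishes.

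Your setup is sound. Reducing to polynomial $F$, invoking characterization~(2) of Theorem~\ref{thm:KE-mod-functor}, and identifying $\Sigma^\infty(\Sigma X)^{\smsh *}\isom \un{\sphere^\rho}\smsh \Sigma^\infty X^{\smsh *}$ as $\Com$-modules are all correct and are the natural first moves. You are also honest that the crucial step remains open, which matches the paper's stance.

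There is, however, a genuine error in your heuristic paragraph. You write that ``the operads $\E_L$ and $\E_{L-1}$ differ by an operadic (de)suspension $\Lambda$ (so that $\K\E_L\homeq\Lambda\K\E_{L-1}$).'' This is false. Operadic suspension is a formal degree shift and does not change the underlying equivariant homotopy types of the operad terms; but $\E_L(n)$ and $\E_{L-1}(n)$ are configuration spaces of different dimensions with genuinely different homotopy types (already for $n=2$ one has $S^{L-1}$ versus $S^{L-2}$), and the same goes for their Koszul duals. What \emph{is} true is Koszul self-duality $\K\E_L\homeq \Lambda^{L}\E_L$ (up to convention), and Dunn additivity $\E_L\homeq \E_1\otimes_{\mathrm{BV}}\E_{L-1}$; neither of these gives $\K\E_L\homeq\Lambda\K\E_{L-1}$. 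So the ``undoing one disc dimension by a shift'' picture, as stated, does not hold, and any argument built on that identification would fail.

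The Dunn-additivity route you also mention is the more viable one: factoring the $\E_L$-coend through the Boardman--Vogt tensor decomposition and absorbing the $\E_1$-factor into $\un{\sphere^\rho}$ via the equivalence $\un{\sphere^\rho}\homeq \Sigma\,\B(\one,\E_1,\Com)$ (cf.\ the Remark after Proposition~\ref{prop:MX}). But making a Boardman--Vogt factorization interact correctly with \emph{comodule} coends and with the specific point-set models used here is exactly the kind of coherence problem that keeps this a conjecture, as you yourself note. Your alternative via representables runs into the same issue, since establishing the $\K\E_{L-1}$-structure on $\der_*(R_X\Sigma)$ already requires the same missing step.
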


Evidence for this conjecture is provided by the stable splitting of mapping spaces of B\"{o}digheimer \cite{bodigheimer:1987} and others. These results imply, for example, that if $M$ is a compact framed $L$-manifold, then the Taylor tower of the functor
\[ \Sigma^\infty \Hom_{\based}(M_+,\Sigma^L-) \]
splits, i.e. has a $\K\E_0$-module structure on its derivatives. This result would follow by applying Conjecture~\ref{conj:splitting} $L$ times to Example~\ref{ex:L-mfld}. We therefore view Conjecture~\ref{conj:splitting} as a refinement of these splitting results to encompass the intermediate functors $\Sigma^\infty \Hom_{\based}(M_+,\Sigma^m-)$ for $m < L$.

\section{The Taylor tower of Waldhausen's algebraic K-theory of spaces functor} \label{sec:A}

Let $A : \based \to \spectra$ denote Waldhausen's algebraic $K$-theory of spaces functor and let $\tilde{A}$ be the corresponding reduced functor so that
\[ A(X) \homeq A(*) \times \tilde{A}(X). \]
The derivatives of $\tilde{A}$ (at $*$) were calculated by Goodwillie and can be written as
\[ \der_n(\tilde{A}) \homeq \Sigma^{1-n}\Sigma^\infty(\Sigma_n/C_n)_+ \]
where $C_n$ is the cyclic group of order $n$ sitting inside the symmetric group $\Sigma_n$.

Our goal is to show that the Taylor tower of $\tilde{A}$ (and hence that of $A$) is determined by a $\K\E_3$-module structure on these derivatives. Our strategy is to use the arithmetic square to break $\tilde{A}$ into its $p$-complete and rational parts, which we deal with separately.

Consider first the $p$-completion of $\tilde{A}$ which we write $\tilde{A}_p$. We investigate this using the relationship between $A(X)$ and the \emph{topological cyclic homology} of $X$, denoted $TC(X)$. One of the main results of \cite{bokstedt/carlsson/cohen/goodwillie/hsiang/madsen:1996} is that the difference between $A(X)$ and $TC(X)$ is locally constant. In particular, this means that the corresponding reduced theories agree in a neighbourhood of the one-point space and so the Taylor towers agree. There is therefore an equivalence of $\C$-coalgebras $\der_*(\tilde{A}_p) \homeq \der_*(\tilde{TC}_p)$.

The $p$-completion of $TC(X)$ is given by the following homotopy pullback square (see, for example, \cite{madsen:1994}):
\begin{equation} \label{eq:TC-diag} \begin{diagram}
  \node{\tilde{TC}(X)_{p}} \arrow{e} \arrow{s} \node{\Sigma \left(\Sigma^\infty \Hom_{\based}(S^1_+,X)_{p}\right)_{hS^1}} \arrow{s,r}{Tr} \\
  \node{\Sigma^\infty \Hom_{\based}(S^1_+,X)_{p}} \arrow{e,t}{1-\Delta_p} \node{\Sigma^\infty \Hom_{\based}(S^1_+,X)_{p}}
\end{diagram} \end{equation}
where the right-hand vertical map is the transfer associated to the $S^1$-action on the free loop space, and the bottom map is given by the difference between the identity map and the map induced by the \ord{$p$} power map $\Delta_p : S^1 \to S^1$.

Taking derivatives, we get a corresponding homotopy pullback of $\C$-coalgebras
\begin{equation} \label{eq:TC-diag-der} \begin{diagram}
    \node{\der_*(\tilde{TC}_p)} \arrow{e} \arrow{s} \node{\der_*(\Sigma[L_p]_{hS^1})} \arrow{s} \\
    \node{\der_*(L_p)} \arrow{e,t}{1-\Delta_p} \node{\der_*(L_p)}
\end{diagram} \end{equation}
where we are writing $L(X) := \Sigma^\infty \Hom_{\based}(S^1_+,X)$ for the suspension spectrum of the free loop space on $X$. Our goal now is to construct a model for the bottom and right-hand maps in this diagram in the category of $\K\E_3$-modules.

When $X$ is a finite CW-complex, the homotopy groups of $L(X)$ are finitely generated and so we have
\[ L_p(X) \homeq S_p \smsh L(X) \]
where $S_p$ denotes the $p$-complete sphere spectrum. It follows that $L_p$ is the left Kan extension to $\finbased$ of the functor
\[ G: \fMfld^1_* \to \spectra; G(X) = S_p \smsh \Emb^f_*(S^1_+,X) \]
and so, by Theorem~\ref{thm:fmfld}, the derivatives of $L_p$ are given by the $\K\E_1$-module
\[ S_p \smsh \der_*(L) = \der_*(L)_p. \]
By a similar argument the derivatives of $\Sigma[L_p]_{hS^1}$ are given by the $\K\E_1$-module $\Sigma[\der_*(L)_p]_{hS^1}$.

The action of $S^1$ on $L$ is determined by the action of $S^1$ on itself via translations which are framed embeddings. The induced action on $S^1$ therefore commutes with the $\K\E_1$-module structure and so induces a transfer map
\[ \Sigma[\der_*(L)_p]_{hS^1} \to \der_*(L)_p \]
which models the right-hand vertical map in (\ref{eq:TC-diag-der}).

On the other hand, the bottom map in (\ref{eq:TC-diag-der}) is a problem. The \ord{$p$} power map on $S^1$ is not a framed embedding (or even an embedding) so the induced map
\[ \Delta_p^*: \der_*(L) \to \der_*(L) \]
does not commute with the $\K\E_1$-module structure. We can solve this problem, however, by modelling the \ord{$p$} power map as a framed embedding between solid tori, i.e. as a map in $\fMfld^3_*$. This allows us to construct a map of $\K\E_3$-modules that models the bottom horizontal map in (\ref{eq:TC-diag-der}).

\begin{definition}
Let $T$ be the solid torus obtained from the cylinder $D^2 \times [0,1]$ via the identifications $(x,y,0) \sim (x,y,1)$ for $(x,y) \in D^2$. We give $T'$ the standard framing arising from that on the cylinder as a subset of $\mathbb{R}^3$.

Now recall that we have fixed a prime $p$. Let $T'$ be the subset of $T$ given by
\[ T' := \left\{ (x,y,z) \; | \; \left( x - \frac{\cos(2\pi z)}{2p} \right)^2 + \left( y - \frac{\sin(2\pi z)}{2p} \right)^2 \leq \frac{1}{p^4} \right\} \]
with the induced framing. A diagram of $T'$ in the case $p = 3$ is shown in Figure~\ref{fig:tori1}. Notice that $T'$ is a thickened helix with one twist for each revolution around the torus $T$.

\addtocounter{equation}{1}
\begin{figure}[ht]
\includegraphics[width=2in]{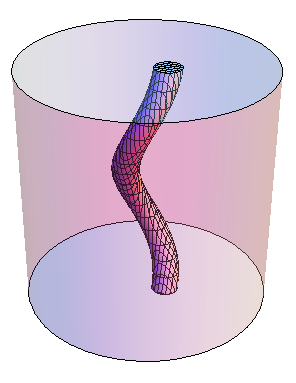}
\caption{$T'$ as a subset of $T$ for $p = 3$}
\label{fig:tori1}
\end{figure}

The inclusion $i : T' \to T$ is a framed embedding and a homotopy equivalence that commutes with the obvious collapse maps $T \to S^1$ and $T' \to S^1$ (that project onto the $z$-coordinate). In particular, the map
\[ i^* : \der_*(\Sigma^\infty \Hom_{\based}(T_+,-)) \to \der_*(\Sigma^\infty \Hom_{\based}(T'_+,-)) \]
can be used to model the identity map on $\der_*(L)$.

We define $\Delta_p : T' \to T$ by
\[ \Delta_p(x,y,z) := (px,py,pz) \]
where $pz$ is interpreted modulo $1$. Notice that $\Delta_p$ models the \ord{$p$} power map on $S^1$. The choice of thickness and radius for the helix $T'$ also ensure that $\Delta_p$ is a framed embedding. A diagram of this embedding for $p = 3$ is given in Figure~\ref{fig:tori2}. It follows that $\Delta_p$ induces a map of $\K\E_3$-modules
\[ \Delta_p^* : \der_*(\Sigma^\infty \Hom_{\based}(T_+,-)) \to \der_*(\Sigma^\infty \Hom_{\based}(T'_+,-)). \]

\addtocounter{equation}{1}
\begin{figure}[ht]
\includegraphics[width=2in]{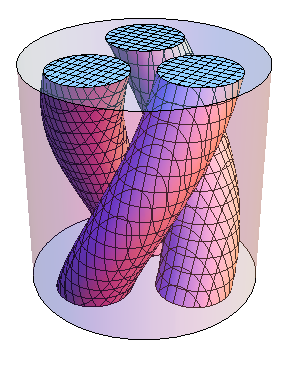}
\caption{$T'$ embedded in $T$ via $\Delta_p$ for $p = 3$}
\label{fig:tori2}
\end{figure}
\end{definition}

\begin{definition} \label{def:TC-p}
We now construct a homotopy pullback square of $\K\E_3$-modules of the form
\begin{equation} \label{eq:TC-p} \begin{diagram}
     \node{\M_p} \arrow{e} \arrow[2]{s} \node{\Sigma[\der_*(\Sigma^\infty \Hom_{\based}(T_+,-))_p]_{hS^1}} \arrow{s,r}{Tr} \\
     \node[2]{\der_*(\Sigma^\infty \Hom_{\based}(T_+,-))_p} \arrow{s,lr}{\sim}{i^*} \\
     \node{\der_*(\Sigma^\infty \Hom_{\based}(T_+,-))_p} \arrow{e,t}{i^* - \Delta_p^*} \node{\der_*(\Sigma^\infty \Hom_{\based}(T'_+,-))_p}
\end{diagram} \end{equation}
where the top right-hand map is the transfer associated to the $S^1$-action on $T$ given by action on the $z$-coordinate, and the bottom horizontal map models the difference between $i^*$ and $\Delta_p^*$ in the stable homotopy category of $\K\E_3$-modules. This defines the $\K\E_3$-module $\M_p$.
\end{definition}

The following result then follows from a comparison between diagrams (\ref{eq:TC-p}) and (\ref{eq:TC-diag-der}). We use here the fact that the forgetful functor from $\K\E_3$-modules to $\C$-coalgebras preserves homotopy pullbacks.

\begin{proposition} \label{prop:A-p}
There is an equivalence of $\C$-coalgebras
\[ \der_*(\tilde{A}_p) \homeq \M_p. \]
\end{proposition}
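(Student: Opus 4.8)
The plan is to directly compare the homotopy pullback square of $\C$-coalgebras (\ref{eq:TC-diag-der}) with the image of the square (\ref{eq:TC-p}) under the forgetful functor $\U: \Mod(\K\E_3) \to \Coalg(\C)$. Since this forgetful functor preserves homotopy pullbacks (it is a right adjoint at the level of homotopy categories, or more directly one checks that fibre sequences of $\K\E_3$-modules map to fibre sequences of $\C$-coalgebras), it suffices to produce an equivalence between the three ``known'' corners of the two squares, compatibly with the maps, and then conclude that the fourth corners $\der_*(\tilde{TC}_p) \simeq \der_*(\tilde{A}_p)$ and $\M_p$ are equivalent as $\C$-coalgebras.

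First I would identify the corners. The framed embedding and homotopy equivalence $i: T' \to T$ over $S^1$, together with the fact that $T \simeq S^1_+$ (as pointed framed manifolds, via adding a disjoint basepoint to the solid torus, which is itself homotopy equivalent to $S^1$), gives via Proposition~\ref{prop:rep} an equivalence of $\C$-coalgebras $\der_*(\Sigma^\infty \Hom_{\based}(T_+,-)) \homeq \der_*(\Sigma^\infty \Hom_{\based}(S^1_+,-)) = \der_*(L)$, and similarly for $T'$. Applying $(-)_p = S_p \smsh -$ (using that $L(X)$ has finitely generated homotopy groups for finite $X$, so $p$-completion commutes with taking derivatives and with the Kan extension, as already noted before Definition~\ref{def:TC-p}), the three non-trivial corners of (\ref{eq:TC-p}) are identified with $\Sigma[\der_*(L)_p]_{hS^1}$, $\der_*(L)_p$, and $\der_*(L)_p$ — exactly the three corners of (\ref{eq:TC-diag-der}).

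Next I would match the maps. The right-hand vertical transfer map in (\ref{eq:TC-p}) is, by construction, the transfer for the $S^1$-action on $T$ by rotation of the $z$-coordinate; under the identification $T \simeq S^1_+$ with $S^1$ acting on itself by translation, this is precisely the transfer for the $S^1$-action on the free loop space, which is the right-hand map of (\ref{eq:TC-diag-der}). For the bottom horizontal map, the point is that $i: T' \to T$ models the identity on $\der_*(L)$ and $\Delta_p: T' \to T$ models the $\ord{$p$}$ power map on $S^1$, so $i^* - \Delta_p^*$ models $1 - \Delta_p^* = 1 - \Delta_p$, the bottom map of (\ref{eq:TC-diag-der}) (after $p$-completion). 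Here one uses that $\Delta_p: T' \to T$ covers the $p$-th power map $S^1 \to S^1$ — both project to the $z$-coordinate and $\Delta_p$ multiplies it by $p$ — and that on homotopy the map induced by a representable functor only depends on the underlying based homotopy class of the map of spaces. With all three corners and both maps matched, the induced equivalence on homotopy pullbacks gives $\M_p \homeq \der_*(\tilde{TC}_p) \homeq \der_*(\tilde{A}_p)$ as $\C$-coalgebras, the last equivalence being the one recalled above Diagram~(\ref{eq:TC-diag}) from \cite{bokstedt/carlsson/cohen/goodwillie/hsiang/madsen:1996}.

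The main obstacle I anticipate is the naturality/compatibility bookkeeping: one must check that the identifications of the corners are compatible not just individually but with the structure maps of the squares, i.e.\ that the relevant diagrams of $\C$-coalgebras commute up to coherent homotopy. In particular, verifying that the collapse maps $T \to S^1$ and $T' \to S^1$ are compatible with $i$ and $\Delta_p$ in the way needed, and that the $S^1$-equivariance used for the transfer is genuine, requires some care — but each of these is a statement about maps of based spaces (or $S^1$-spaces) that can be checked before applying the functor $\Sigma^\infty \Hom_{\based}(-,-)$ and taking derivatives, so no genuinely new homotopy-theoretic input beyond Propositions~\ref{prop:rep} and \ref{prop:MX} is required. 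A minor additional point to address is that the square (\ref{eq:TC-p}) lives in $\K\E_3$-modules while (\ref{eq:TC-diag-der}) lives in $\C$-coalgebras; this is handled precisely by noting $\U$ preserves homotopy pullbacks, so the comparison of fourth corners is legitimate.
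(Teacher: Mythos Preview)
Your proposal is correct and follows essentially the same approach as the paper: the paper's proof consists of precisely the remark that the result follows from a comparison between diagrams (\ref{eq:TC-p}) and (\ref{eq:TC-diag-der}), together with the fact that the forgetful functor from $\K\E_3$-modules to $\C$-coalgebras preserves homotopy pullbacks. You have simply spelled out in detail the identifications of corners and maps that the paper leaves implicit in the surrounding discussion.
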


We now turn to the rationalization of $\tilde{A}$ which we denote by $\tilde{A}_{\mathbb{Q}}$. There is a rational equivalence
\[ \tilde{A}(X) \homeq \Sigma L(X)_{hS^1} \]
and so, by the argument used above for $p$-completion, the derivatives of the rationalization $\der_*(\tilde{A}_{\mathbb{Q}})$ are given by the $\K\E_1$-module
\[ \M'_{\mathbb{Q}} := \Sigma[\der_*(L)_{hS^1}]_{\mathbb{Q}} \]
which, via the operad map $\K\E_3 \to \K\E_1$, we also view as a $\K\E_3$-module.

In order to form the arithmetic square for $\tilde{A}$, we need also a model for the derivatives of the functor
\[ \left[ \prod_p \tilde{A}_p \right]_{\mathbb{Q}}. \]
On the one hand, since rationalization commutes with taking derivatives, Proposition~\ref{prop:A-p} implies that there is an equivalence of $\C$-coalgebras
\[ \der_*\left( \left[ \prod_p \tilde{A}_p \right]_{\mathbb{Q}} \right) \homeq \hat{\M}_{\mathbb{Q}} := \left[ \prod_p \M_p \right]_{\mathbb{Q}}. \]
On the other hand, an argument similar to that of the previous paragraph gives us an equivalence of $\C$-coalgebras
\[ \der_*\left( \left[ \prod_p \tilde{A}_p \right]_{\mathbb{Q}} \right) \homeq \hat{\M}'_{\mathbb{Q}} := \left[ \prod_p \Sigma[\der_*(L)_p]_{hS^1}\right]_{\mathbb{Q}}. \]
We now have a diagram of the form
\begin{equation} \label{eq:arithmetic} \begin{diagram}
  \node[3]{\prod_p \M_p} \arrow{s} \\
  \node[3]{\hat{\M}_{\mathbb{Q}}} \arrow{s,r}{\sim} \\
  \node{\M'_{\mathbb{Q}}} \arrow{e} \node{\hat{\M}'_{\mathbb{Q}}} \arrow{ne,..} \arrow{e,b}{\sim} \node{\der_*\left( \left[ \prod_p \tilde{A}_p \right]\right)}
\end{diagram} \end{equation}
where the top vertical map (given by rationalization) and the left-hand horizontal map (induced by $p$-completion for each $p$) are maps of $\K\E_3$-modules.

In order to form the pullback in the category of $\K\E_3$-modules, we need to show that there is an equivalence of $\K\E_3$-modules $\hat{\M}'_{\mathbb{Q}} \weq \hat{\M}_{\mathbb{Q}}$ (the dotted arrow above) that lifts the given equivalence of $\C$-coalgebras. We do this by showing that \emph{any} morphism of $\C$-coalgebras of this form lifts to a morphism of $\K\E_3$-modules (which is then necessarily an equivalence since these are detected on the underlying symmetric sequences).

Let $\widetilde{\Map}_{\K\E_3}(-,-)$ and $\widetilde{\Map}_{\C}(-,-)$ denote the derived mapping spectra for $\K\E_3$-modules and $\C$-coalgebras respectively.

\begin{lemma} \label{lem:surj}
The map
\[ \pi_0 \; \widetilde{\Map}_{\K\E_3}(\hat{\M}'_{\mathbb{Q}},\hat{\M}_{\mathbb{Q}}) \to \pi_0 \; \widetilde{\Map}_{\C}(\hat{\M}'_{\mathbb{Q}},\hat{\M}_{\mathbb{Q}}), \]
induced by the map $\C_{\K\E_3} \to \C$ of comonads, is surjective.
\end{lemma}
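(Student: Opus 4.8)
The plan is to compare the two mapping spectra term-by-term using the rational structure of the objects involved. The key point is that both $\hat{\M}'_{\mathbb{Q}}$ and $\hat{\M}_{\mathbb{Q}}$ are rational $\K\E_3$-modules, so we should be able to reduce the comparison to a purely algebraic one over $\mathbb{Q}$, where the difference between the $\C$-coalgebra structure and the $\K\E_3$-module structure becomes controllable. More precisely, the comonad $\C$ is built as a homotopy colimit $\hocolim_L \C_L$, and the map $\C_{\K\E_3} = \C_3 \to \C$ that relates the two mapping spectra is an equivalence in a range that grows with the `weight' (i.e. the symmetric-sequence degree). Since the derivatives $\der_n(\tilde{A})$, and hence the terms of $\M_p$, have a specific form — a suspension of $\Sigma^\infty(\Sigma_n/C_n)_+$ — one first checks that rationally these become especially simple: the rational homology of $\Sigma_n/C_n$ is concentrated so that, after rationalization, the relevant symmetric sequences are `small' in each degree.

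First I would set up the relevant obstruction theory. A morphism of $\C$-coalgebras between bounded objects can be analyzed via the tower of $n$-truncations, and the obstruction to lifting such a morphism to a $\K\E_3$-module map lives in a sequence of groups built from $\Ext$-type terms measuring the difference between $\C$ and $\C_3$ acting on the truncation layers. The crucial input is that $\der_*(L)$, and therefore all the objects appearing in diagram~(\ref{eq:arithmetic}), are (stably) equivalent to $\K\E_1$-modules — they come from the representable functor on $S^1_+$ via Theorem~\ref{thm:fmfld} — so their $\C$-coalgebra structure already factors through $\C_1 \to \C$, and a fortiori through $\C_3 \to \C$. This means that when we compute $\widetilde{\Map}_{\C}(\hat{\M}'_{\mathbb{Q}},\hat{\M}_{\mathbb{Q}})$ we may replace $\C$ by $\C_3$ on the nose, provided we know the comonad map induces an equivalence on the relevant mapping spectra. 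So the heart of the argument is to show that $\widetilde{\Map}_{\C_3}(\hat{\M}'_{\mathbb{Q}},\hat{\M}_{\mathbb{Q}}) \to \widetilde{\Map}_{\C}(\hat{\M}'_{\mathbb{Q}},\hat{\M}_{\mathbb{Q}})$ is a $\pi_0$-surjection (indeed, one hopes, an equivalence), after which the left-hand side maps to the $\K\E_3$-module mapping spectrum and the composite is the identity up to the forgetful functor.

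The key steps, in order, would be: (1) identify $\hat{\M}_{\mathbb{Q}}$ and $\hat{\M}'_{\mathbb{Q}}$ as rationalizations of $\K\E_1$-modules (coming from $\der_*(L)$), so their $\C$-coactions are pulled back along $\C_1 \to \C$; (2) observe that, consequently, the derived mapping spectrum $\widetilde{\Map}_{\C}$ between them can be computed as a mapping spectrum of $\C_1$-coalgebras, equivalently of $\K\E_1$-modules, and similarly that any $\C$-coalgebra map between them is automatically a $\C_1$-coalgebra map in the homotopy category — this is where one uses that $\C_L \to \C$ becomes highly connected on bounded objects, i.e.\ the homotopy colimit defining $\C$ is eventually constant in each weight; (3) use the operad map $\K\E_3 \to \K\E_1$ to restrict scalars: a map of $\K\E_1$-modules is in particular a map of $\K\E_3$-modules, giving the desired lift; (4) check compatibility, namely that the lifted $\K\E_3$-module map maps to the original $\C$-coalgebra map under $\U$. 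The main obstacle I expect is step (2): justifying that the derived mapping spectrum over $\C$ agrees with that over $\C_1$ (or $\C_3$) for these particular rational objects. This requires a quantitative estimate — that the map $\C_L(\A) \to \C(\A)$ is, say, $(f(L,n))$-connected in weight $n$ for an appropriate function $f$ — combined with the boundedness of $\hat{\M}_{\mathbb{Q}}$ and a cellular/convergence argument for the mapping spectrum, and possibly a direct appeal to the fact (used implicitly already in the paper) that rationally the little-discs operads $\E_L$ and their Koszul duals stabilize quickly. An alternative, if a clean connectivity estimate is awkward, is to argue directly with the explicit formula for $\C_L$ and the known rational homotopy type of $\der_*(L)_{\mathbb{Q}}$ (which is a shift of $\Sigma^\infty(S^1)_{\mathbb{Q}}$-type data concentrated in low degrees), making the relevant $\lim^1$ and $\Ext$ terms vanish by a dimension count.
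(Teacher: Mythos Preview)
Your proposal has a genuine gap at exactly the point you yourself flag as ``the main obstacle'': step (2). You want to argue that $\widetilde{\Map}_{\C_3}(\hat{\M}'_{\mathbb{Q}},\hat{\M}_{\mathbb{Q}}) \to \widetilde{\Map}_{\C}(\hat{\M}'_{\mathbb{Q}},\hat{\M}_{\mathbb{Q}})$ is a $\pi_0$-surjection via a connectivity estimate on $\C_L \to \C$. No such estimate is available: the maps $\K\E_{L'}(n) \to \K\E_L(n)$ for $L' > L$ are far from highly connected (their homotopy types differ substantially), so $\C_L(\A) \to \C(\A)$ has no useful connectivity in general, and the homotopy colimit defining $\C$ is not ``eventually constant in each weight''. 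Your alternative ``dimension count'' is not spelled out and would in any case need exactly the sort of rational-homology input the paper uses. The detour through $\K\E_1$ in steps (1) and (3) is also confused: the target $\hat{\M}_{\mathbb{Q}} = [\prod_p \M_p]_{\mathbb{Q}}$ carries the \emph{specific} $\K\E_3$-module structure built from the solid-torus embedding modelling $\Delta_p$, and this structure is not known to lift to $\K\E_1$ (indeed, the whole point of passing to dimension $3$ was that $\Delta_p$ is not a framed embedding in dimension $1$). So even if you produced a $\K\E_1$-module map into some $\K\E_1$-module abstractly $\C$-equivalent to $\hat{\M}_{\mathbb{Q}}$, that would not give a $\K\E_3$-module map into $\hat{\M}_{\mathbb{Q}}$ with its given structure.

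The paper's argument is quite different and brings in two ingredients you do not mention. First, it replaces $\C$ not by $\C_3$ but by $\C_{\K\Com}$: since the target $\hat{\M}_{\mathbb{Q}}$ is rational, the norm maps defining the difference between $\C$ and $\C_{\K\Com}$ are equivalences, so $\widetilde{\Map}_{\C}(\hat{\M}'_{\mathbb{Q}},\hat{\M}_{\mathbb{Q}}) \simeq \widetilde{\Map}_{\K\Com}(\hat{\M}'_{\mathbb{Q}},\hat{\M}_{\mathbb{Q}})$. The problem is now to compare $\K\E_3$-module maps with $\K\Com$-module maps, which is governed by the operad map $\K\Com \to \K\E_3$. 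Second, the paper uses \emph{rational formality of $\E_3$} (Kontsevich, together with Getzler--Jones Koszulness) to collapse the Bousfield--Kan spectral sequences computing both mapping spectra at $E^2$, reducing the comparison to one between $\Hom_{H_*\K\E_3}$ and $\Hom_{H_*\K\Com}$ on rational homology. Finally, a genuine degree argument --- the fact that $H_*\hat{\M}'_{\mathbb{Q}}(n)$, $H_*\hat{\M}_{\mathbb{Q}}(n)$ and $H_*\K\Com(n)$ are all concentrated in degree $1-n$, and that $H_{1-n}\K\Com(n) \to H_{1-n}\K\E_3(n)$ is an isomorphism (both sides are $\mathrm{Lie}(n)$) --- shows this algebraic comparison is an isomorphism. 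Formality is doing the real work here; without it you cannot pass from the topological mapping spectra to algebra.
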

\begin{proof}
In \cite{arone/ching:2014} we showed that the comonad $\C$ encodes the structure for a `divided power' module over the operad $\K\Com$ that is Koszul dual to the commutative operad. In particular, there is a map of comonads $\C \to \C_{\K\Com}$ that is built from norm maps of the form
\[ \Map(\K\Com(n_1) \smsh \dots, \A(n))_{h\Sigma_{n_1} \times \dots \times \Sigma_{n_r}} \arrow{e,t}{N} \Map(\K\Com(n_1) \smsh \dots, \A(n))^{h\Sigma_{n_1} \times \dots \times \Sigma_{n_r}}. \]
These norm maps are equivalences when $\A(n)$ is a rational spectrum, and, in that case, the terms of the symmetric sequence $\C(\A)$ are again rational. Since the module $\hat{\M}_{\mathbb{Q}}$ is formed from rational spectra, it follows that there is an equivalence
\[ \widetilde{\Map}_{\C}(\hat{\M}'_{\mathbb{Q}},\hat{\M}_{\mathbb{Q}}) \weq \widetilde{\Map}_{\K\Com}(\hat{\M}'_{\mathbb{Q}},\hat{\M}_{\mathbb{Q}}) \]
induced by the map $\C \to \C_{\K\Com}$. It is therefore sufficient to show that the map
\[ \tag{*} \pi_0 \; \widetilde{\Map}_{\K\E_3}(\hat{\M}'_{\mathbb{Q}},\hat{\M}_{\mathbb{Q}}) \to \pi_0 \; \widetilde{\Map}_{\K\Com}(\hat{\M}'_{\mathbb{Q}},\hat{\M}_{\mathbb{Q}}) \]
is a surjection, where we can think of this as induced by the map of operads $\K\Com \to \K\E_3$ that is dual to the standard map $\E_3 \to \Com$. In other words, it is sufficient to show that any (derived) morphism of $\K\Com$-modules lifts, up to homotopy, to a morphism of $\K\E_3$-modules.

To prove this, we construct a square of the following form:
\begin{equation} \label{eq:bkss} \begin{diagram}
  \node{\pi_0 \; \widetilde{\Map}_{\K\E_3}(\hat{\M}'_{\mathbb{Q}},\hat{\M}_{\mathbb{Q}})} \arrow{e,t}{\text{(*)}} \arrow{s,l,A}{\text{(A)}}
    \node{\pi_0 \; \widetilde{\Map}_{\K\Com}(\hat{\M}'_{\mathbb{Q}},\hat{\M}_{\mathbb{Q}})} \arrow{s,lr}{\isom}{\text{(B)}} \\
  \node{\Hom_{H_*\K\E_3}(H_*\hat{\M}'_{\mathbb{Q}},H_*\hat{\M}_{\mathbb{Q}})} \arrow{e,tb}{\text{(C)}}{\isom}
    \node{\Hom_{H_*\K\Com}(H_*\hat{\M}'_{\mathbb{Q}},H_*\hat{\M}_{\mathbb{Q}}).}
\end{diagram} \end{equation}
The terms in the bottom row are the homomorphism of modules over the operads of graded $\mathbb{Q}$-vector spaces given by the rational homology of the operads $\K\E_3$ and $\K\Com$, and of the modules $\hat{\M}'_{\mathbb{Q}}$ and $\hat{\M}_{\mathbb{Q}}$. The map labelled (C) is induced by the operad map. The maps labelled (A) and (B) are edge homomorphisms associated with the Bousfield-Kan spectral sequences used to calculated the homology of the mapping spectra in the top row, as we explain below. Given that the diagram is commutative, it will then be sufficient to show that (A) is a surjection, and that (B) and (C) are isomorphisms.

We start by analyzing the Bousfield-Kan spectral sequence for calculating the homotopy groups of $\widetilde{\Map}_{\K\E_3}(\hat{\M}'_{\mathbb{Q}},\hat{\M}_{\mathbb{Q}})$. The $E^1$ term of this spectral sequence takes the form
\[ E^1_{-s,t} \isom \pi_{t-s} \Map_{\Sigma}(\mathsf{nondeg}[\hat{\M}'_{\mathbb{Q}} \circ \underbrace{\K\E_3 \circ \dots \circ \K\E_3}_{\text{$s$ factors}}], \hat{\M}_{\mathbb{Q}}) \]
where $\mathsf{nondeg}$ means that we take the `nondegenerate' terms in the given iterated composition product, i.e. where each of the factors $\K\E_3$ contributes something from a term higher than the first.

\begin{claim}
The $E^2$ term of the spectral sequence is given by
\[ E^2_{-s,t} \isom \Ext^{s,t}_{H_*\K\E_3}(H_*\hat{\M}'_{\mathbb{Q}},H_*\hat{\M}_{\mathbb{Q}}). \]
\end{claim}
\begin{proof}
In the formula for the $E^1$-term $\Map_{\Sigma}(-,-)$ denotes the mapping spectrum for symmetric sequences, which we can expand out (assuming cofibrant models for $\K\E_3$ and $\hat{\M}'_{\mathbb{Q}}$) as
\[ E^1_{-s,t} \isom \prod_n \pi_{t-s} \Map(\mathsf{nondeg}[\hat{\M}'_{\mathbb{Q}} \circ \underbrace{\K\E_3 \circ \dots \circ \K\E_3}_{\text{$s$ factors}}](n), \hat{\M}_{\mathbb{Q}}(n))^{h\Sigma_n}. \]
For a rational spectrum $X$ with action of a finite group $G$, we have
\[ \pi_*(X^{hG}) \isom \pi_*(X_{hG}) \isom [\pi_*X]_G \isom [\pi_*X]^G \]
and so we can write
\[ E^1_{-s,t} \isom \prod_n \left[ \pi_{t-s} \Map(\mathsf{nondeg}[\hat{\M}'_{\mathbb{Q}} \circ \underbrace{\K\E_3 \circ \dots \circ \K\E_3}_{\text{$s$ factors}}](n), \hat{\M}_{\mathbb{Q}}(n))\right]^{\Sigma_n}. \]
The homotopy groups of a rational spectrum are equivalent to its rational homology groups and so, using the K\"{u}nneth Theorem, we get
\[ E^1_{-s,t} \isom \prod_n \left[ \Hom(\mathsf{nondeg}[H_*\hat{\M}'_{\mathbb{Q}} \circ \underbrace{H_*\K\E_3 \circ \dots \circ H_*\K\E_3}_{\text{$s$ factors}}](n), H_*\hat{\M}_{\mathbb{Q}}(n))\right]_{t-s}^{\Sigma_n}. \]
We can now see that $(E^1,d^1)$ is precisely the complex used to calculate $\Ext$ in the category of modules over the operad $H_*\K\E_3$ of graded $\mathbb{Q}$-vector spaces.
\end{proof}

\begin{claim}
The Bousfield-Kan spectral sequence for the homotopy groups of $\widetilde{\Map}_{\K\E_3}(\hat{\M}'_{\mathbb{Q}},\hat{\M}_{\mathbb{Q}})$ collapses at $E^2$ and so the edge homomorphism
\[ \pi_0 \; \widetilde{\Map}_{\K\E_3}(\hat{\M}'_{\mathbb{Q}},\hat{\M}_{\mathbb{Q}}) \to E^2_{0,0} = \Hom_{H_*\K\E_3}(H_*\hat{\M}'_{\mathbb{Q}},H_*\hat{\M}_{\mathbb{Q}}) \]
is a surjection.
\end{claim}
\begin{proof}
First note that, since $\hat{\M}'_{\mathbb{Q}}$ and $\hat{\M}_{\mathbb{Q}}$ are rational, the spectral sequence for calculating the homotopy groups of $\widetilde{\Map}_{\K\E_3}(\hat{\M}'_{\mathbb{Q}},\hat{\M}_{\mathbb{Q}})$ is isomorphic to that for calculating the homotopy groups of $\widetilde{\Map}_{\K\E_3 \smsh H\mathbb{Q}}(\hat{\M}'_{\mathbb{Q}},\hat{\M}_{\mathbb{Q}})$ where we have replaced $\K\E_3$ with its rationalization.

We now use rational formality of the operad $\K\E_3$. Recall that Kontsevich proved in \cite{kontsevich:1999} that the operads $\mathbb{E}_L$ are formal over the real numbers, and that Guill\'{e}n Santos et al. proved in \cite{guillensantos/navarro/pascual/roig:2005} that formality over $\mathbb{R}$ descends to formality over $\mathbb{Q}$ for operads with no zero term. This means that there is an equivalence of operads
\[ \E_L \smsh H\mathbb{Q} \homeq H\E_L \]
where the right-hand side denotes the operad formed by the generalized Eilenberg-MacLane spectra associated to the rational homology groups of $\E_L$. Taking Koszul duals commutes with rationalization and so we have an equivalence
\[ \K\E_L \smsh H\mathbb{Q} \homeq \K(H\E_L). \]
Koszul duality for an operad of Eilenberg-MacLane spectra reduces to Koszul duality of the underlying operad of graded abelian groups. Getzler and Jones \cite{getzler/jones:1994} proved that the homology of $\E_L$ is a `Koszul' operad in the sense of Ginzburg and Kapranov \cite{ginzburg/kapranov:1994} from which it follows that
\[ \K(H\E_L) \homeq H\K\E_L. \]
Altogether we have an equivalence
\[ \K\E_L \smsh H\mathbb{Q} \homeq H\K\E_L, \]
so, in particular, $\K\E_3$ is rationally formal.

The modules $\hat{\M}'_{\mathbb{Q}}$ and $\hat{\M}_{\mathbb{Q}}$ are already Eilenberg-MacLane spectra (since the derivatives of $\tilde{A}$ are wedges of copies of sphere spectra) and so the spectral sequence in question is equivalent to that used to calculate the homotopy groups of the mapping spectrum
\[ \widetilde{\Map}_{H\K\E_3}(H\hat{\M}'_{\mathbb{Q}},H\hat{\M}_{\mathbb{Q}}), \]
or, equivalently, that of the corresponding object in the world of rational chain complexes
\[ \widetilde{\Map}_{H_*\K\E_3}(H_*\hat{\M}'_{\mathbb{Q}},H_*\hat{\M}_{\mathbb{Q}}). \]
But the differentials in the underlying chain complexes here are all trivial so this spectral sequence collapses at $E_2$ as claimed.
\end{proof}

This shows that the map (A) in diagram (\ref{eq:bkss}) is a surjection. We now turn to the corresponding spectral sequence for the homotopy groups of $\widetilde{\Map}_{\K\Com}(\hat{\M}'_{\mathbb{Q}},\hat{\M}_{\mathbb{Q}})$.

\begin{claim}
The $E^2$-term of this spectral sequence is given by
\[ E^2_{-s,t} \isom \Ext^{s,t}_{H_*\K\Com}(H_*\hat{\M}'_{\mathbb{Q}},H_*\hat{\M}_{\mathbb{Q}}) \]
and this is zero for $t \neq 0$. The spectral sequence therefore collapses and the edge homomorphism is an isomorphism
\[ \pi_0 \; \widetilde{\Map}_{\K\Com}(\hat{\M}'_{\mathbb{Q}},\hat{\M}_{\mathbb{Q}}) \isom E^2_{0,0} \isom Hom_{H_*\K\Com}(H_*\hat{\M}'_{\mathbb{Q}},H_*\hat{\M}_{\mathbb{Q}}). \]
\end{claim}
\begin{proof}
The same argument as for $\K\E_3$ gives the form of the $E^2$-term. But the rational homologies $H_*\K\Com(n)$, $H_*\hat{\M}'_{\mathbb{Q}}(n)$ and $H_*\hat{\M}_{\mathbb{Q}}(n)$ are all concentrated in a single degree ($1-n$). It is then easy to see directly that $E^1_{-s,t} = 0$ for $t \neq 0$.
\end{proof}

This shows that the map (B) in diagram (\ref{eq:bkss}) is an isomorphism, and the diagram commutes by naturality of the construction of the spectral sequence.

It remains to check that the map (C) is an isomorphism. For this, we notice that, for degree reasons, the action of the operad $H_*\K\E_3$ on the modules $H_*\hat{\M}'_{\mathbb{Q}}$ and $H_*\hat{\M}_{\mathbb{Q}}$ is determined fully by what happens on the terms $H_{1-n}(\K\E_3(n))$. Since the maps
\[ H_{1-n}(\K\Com(n)) \to H_{1-n}(\K\E_3(n)) \]
are isomorphisms (each side is isomorphic to $\mathsf{Lie}(n)$), we can see directly that a morphism of $H_*\K\E_3$-modules of the form
\[ H_*\hat{\M}'_{\mathbb{Q}} \to H_*\hat{\M}_{\mathbb{Q}} \]
carries precisely the same data as a morphism of $H_*\K\Com$-modules. Thus the map (C) is an isomorphism as required. This completes the proof of Lemma~\ref{lem:surj}.
\end{proof}

We then build a homotopy pullback square of $\K\E_3$-modules of the form
\begin{equation} \label{eq:A} \begin{diagram}
   \node{\M} \arrow[2]{e} \arrow{s} \node[2]{\prod_{p} \M_p} \arrow{s} \\
   \node{\M'_{\mathbb{Q}}} \arrow{e} \node{\hat{\M}'_{\mathbb{Q}}} \arrow{e,b}{\sim} \node{\hat{\M}_{\mathbb{Q}}}
\end{diagram} \end{equation}
and obtain the following result.

\begin{theorem} \label{thm:A}
There is an equivalence of $\C$-coalgebras
\[ \der_*(\tilde{A}) \homeq \M. \]
In other words, the derivatives of the functor $\tilde{A}$ are a $\K\E_3$-module.
\end{theorem}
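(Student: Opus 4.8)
The plan is to deduce Theorem~\ref{thm:A} by assembling the $p$-complete computation of Proposition~\ref{prop:A-p} with the rational computation via the arithmetic fracture square, in exactly the way that Proposition~\ref{prop:A-p} itself was obtained from a comparison of homotopy pullback squares. Since $A(X)$, and hence $\tilde{A}(X)$, has degreewise finitely generated homotopy groups for every finite complex $X$---equivalently, since each derivative $\der_n\tilde{A}\homeq\Sigma^{1-n}\Sigma^\infty(\Sigma_n/C_n)_+$ is a finite spectrum---the usual arithmetic fracture square applies, giving a homotopy pullback square of functors $\finbased\to\spectra$ expressing $\tilde{A}$ as the homotopy fibre product of $\prod_p\tilde{A}_p$ and $\tilde{A}_{\mathbb{Q}}$ over $\left[\prod_p\tilde{A}_p\right]_{\mathbb{Q}}$, with the map out of $\prod_p\tilde{A}_p$ being rationalization and the map out of $\tilde{A}_{\mathbb{Q}}$ the rationalization of the product of the completion maps.

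First I would apply $\der_*$ to this square. A homotopy pullback square of spectrum-valued functors is also a homotopy pushout square, and $\der_*$ carries homotopy colimits of such functors to homotopy colimits of $\C$-coalgebras; since the homotopy category of $\C$-coalgebras is stable, the resulting pushout square of $\C$-coalgebras is again a homotopy pullback square. (Some care is needed to see that $\der_*$ commutes with the infinite products here; this can be checked on underlying symmetric sequences, where it reduces to the corresponding statement for the finite spectra $\der_n\tilde{A}$ and their $p$-completions.) This produces a homotopy pullback square of $\C$-coalgebras with corners $\der_*(\tilde{A})$, $\prod_p\der_*(\tilde{A}_p)$, $\der_*(\tilde{A}_{\mathbb{Q}})$ and $\der_*\!\left(\left[\prod_p\tilde{A}_p\right]_{\mathbb{Q}}\right)$.

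Next I would identify this with the image under the forgetful functor $\U:\Mod(\K\E_3)\to\Coalg(\C)$ of the square (\ref{eq:A}). Proposition~\ref{prop:A-p} gives $\der_*(\tilde{A}_p)\homeq\M_p$ as $\C$-coalgebras, hence $\prod_p\der_*(\tilde{A}_p)\homeq\prod_p\M_p$; the rational equivalence $\tilde{A}\homeq\Sigma L_{hS^1}$ together with Theorem~\ref{thm:fmfld} gives $\der_*(\tilde{A}_{\mathbb{Q}})\homeq\M'_{\mathbb{Q}}$ (viewed as a $\K\E_3$-module by restriction along $\K\E_3\to\K\E_1$); and, because rationalization commutes with $\der_*$, the fourth corner is identified both with $\hat{\M}_{\mathbb{Q}}=\left[\prod_p\M_p\right]_{\mathbb{Q}}$ and with $\hat{\M}'_{\mathbb{Q}}$. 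Under these identifications the map $\prod_p\der_*(\tilde{A}_p)\to\der_*\!\left(\left[\prod_p\tilde{A}_p\right]_{\mathbb{Q}}\right)$ becomes rationalization $\prod_p\M_p\to\hat{\M}_{\mathbb{Q}}$, and the map $\der_*(\tilde{A}_{\mathbb{Q}})\to\der_*\!\left(\left[\prod_p\tilde{A}_p\right]_{\mathbb{Q}}\right)$ becomes the composite $\M'_{\mathbb{Q}}\to\hat{\M}'_{\mathbb{Q}}\weq\hat{\M}_{\mathbb{Q}}$, whose second arrow is the canonical $\C$-coalgebra equivalence comparing the two descriptions of that fourth corner. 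These are exactly the maps used to build (\ref{eq:A}): by Lemma~\ref{lem:surj} the canonical equivalence lifts to a $\K\E_3$-module equivalence, which is the dotted arrow of (\ref{eq:arithmetic}) and the bottom equivalence of (\ref{eq:A}). Since $\U$ preserves homotopy pullbacks, $\U$ of (\ref{eq:A}) exhibits $\U(\M)$ as the homotopy pullback of these same three $\C$-coalgebras along these same maps, and comparison with the $\der_*$-square yields $\der_*(\tilde{A})\homeq\U(\M)$---that is, the $\C$-coalgebra $\der_*(\tilde{A})$ underlies the $\K\E_3$-module $\M$, as claimed.

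The main obstacle is this last comparison: one must verify that applying $\der_*$ to the arithmetic square gives a diagram of $\C$-coalgebras equivalent, maps included, to $\U$ of (\ref{eq:A}). The only non-formal point is that the equivalence supplied by Lemma~\ref{lem:surj} lies over the canonical $\C$-coalgebra equivalence of the fourth corner; granting this, the rest follows from naturality of $\der_*$ and of the constructions of Section~\ref{sec:fMfld}. One should also be mildly careful that the arithmetic fracture square genuinely applies and that $\der_*$ interacts correctly with the infinite products, for which---as above---it is enough to argue on underlying symmetric sequences using that the derivatives of $\tilde{A}$ are finite spectra.
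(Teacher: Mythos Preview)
Your proposal is correct and follows essentially the same approach as the paper: apply $\der_*$ to the arithmetic fracture square for $\tilde{A}$ to obtain a homotopy pullback of $\C$-coalgebras, then compare it with the $\K\E_3$-module square (\ref{eq:A}) using Proposition~\ref{prop:A-p}, the rational identification $\der_*(\tilde{A}_{\mathbb{Q}})\homeq\M'_{\mathbb{Q}}$, and Lemma~\ref{lem:surj} to match the maps at the fourth corner. In fact you have spelled out more of the details (finiteness of the derivatives, compatibility of $\der_*$ with the infinite product, and the precise role of Lemma~\ref{lem:surj}) than the paper's own very brief proof does.
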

\begin{proof}
The arithmetic square for $\tilde{A}$ gives us a homotopy pullback of $\C$-coalgebras
\[ \begin{diagram}
  \node{\der_*\tilde{A}} \arrow{e} \arrow{s} \node{\der_*\left(\prod_p \tilde{A}_p\right)} \arrow{s} \\
  \node{\der_*\tilde{A}_{\mathbb{Q}}} \arrow{e} \node{\der_*\left(\prod_p \tilde{A}_p\right)_{\mathbb{Q}}}
\end{diagram} \]
Comparison with (\ref{eq:A}) gives us the required equivalence.
\end{proof}

\begin{corollary}
The Taylor tower of $A$ has the following descriptions:
\begin{enumerate}
  \item there is an $\E_3$-comodule $\N_n$ (Koszul dual to the $\K\E_3$-module $\der_{\leq n}(\tilde{A}))$ such that
  \[ P_nA(X) \homeq A(*) \times [\N_n \smsh_{\E_3} \Sigma^\infty X^{\smsh *}] \]
  with the Taylor tower map $P_nA \to P_{n-1}A$ induced by a map of $\E_3$-comodules $\N_n \to \N_{n-1}$;
  \item there is a functor $G_n: \fMfld^3_* \to \spectra$ such that $P_nA$ is the enriched homotopy left Kan extension of $G_n$ along the forgetful functor $\fMfld^3_* \to \finbased$ with the Taylor tower map $P_nA \to P_{n-1}A$ induced by a natural transformation $G_n \to G_{n-1}$.
\end{enumerate}
\end{corollary}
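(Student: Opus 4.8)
The plan is to read off both descriptions from Theorem~\ref{thm:A} by working with the reduced functor $\tilde A$ and then reinstating the constant summand. Write $\M := \der_*(\tilde A)$ for the $\K\E_3$-module supplied by Theorem~\ref{thm:A}. Since $A \homeq A(*) \times \tilde A$, and $P_n$ commutes with finite products while $A(*)$ is a constant (hence $0$-excisive, hence $n$-excisive) functor, we obtain
\[ P_n A \homeq A(*) \times P_n\tilde A, \]
with $P_nA \to P_{n-1}A$ equal to the product of $\mathrm{id}_{A(*)}$ with $P_n\tilde A \to P_{n-1}\tilde A$, and $\der_*(P_n\tilde A) \homeq \M_{\leq n}$, the $n$-truncation of $\M$. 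Thus everything reduces to describing the Taylor tower of $\tilde A$.

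For part (1): the bounded $\K\E_3$-module $\M_{\leq n}$ has a Koszul-dual bounded $\E_3$-comodule $\N_n := \R(\M_{\leq n})$ by Proposition~\ref{prop:koszul-EL}. The functor $P_n\tilde A$ is polynomial with $\der_*(P_n\tilde A) \homeq \M_{\leq n}$ a $\K\E_3$-module, so the implication $(1)\Rightarrow(2)$ of Theorem~\ref{thm:KE-mod-functor} gives a natural equivalence $P_n\tilde A(X) \homeq \N_n \homsmsh_{\E_3} \Sigma^\infty X^{\smsh *}$; combining this with the splitting above yields the stated formula for $P_n A(X)$. The tower maps $\M_{\leq n}\to\M_{\leq(n-1)}$ are morphisms of $\K\E_3$-modules, and applying the right adjoint $\R$ produces the morphisms of $\E_3$-comodules $\N_n\to\N_{n-1}$ inducing $P_nA\to P_{n-1}A$. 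Here one uses that the equivalences in Theorem~\ref{thm:KE-mod-functor} and the adjunction $(\Q,\R)$ are natural, so that the whole system of functors, comodules and tower maps is compatible up to coherent equivalence.

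For part (2): apply the converse half of Theorem~\ref{thm:fmfld} to the polynomial functor $P_n\tilde A$, whose derivatives carry a $\K\E_3$-module structure; this produces a pointed simplicially-enriched functor $G_n:\fMfld^3_*\to\spectra$ (one may take $G_n$ so that $\der_*(P_n\tilde A)\homeq\d(G_n)$, but only existence is needed) with $P_n\tilde A\homeq\mathsf{hLKan}(G_n)$ along the forgetful functor $\fMfld^3_*\to\finbased$. Naturality in Theorems~\ref{thm:KE-mod-functor} and~\ref{thm:fmfld} again lets us realize $P_n\tilde A\to P_{n-1}\tilde A$ by a natural transformation $G_n\to G_{n-1}$, and reinstating the constant gives $P_nA\homeq A(*)\times\mathsf{hLKan}(G_n)$ as required (the summand $A(*)$ appears because any homotopy left Kan extension from $\fMfld^3_*$ is automatically reduced). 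The substantive content is entirely in Theorem~\ref{thm:A}, behind which lie the arithmetic-square construction and Lemma~\ref{lem:surj}; the only points needing care in this deduction are the bookkeeping for the constant functor $A(*)$ (which splits off cleanly since it is $0$-excisive) and the compatibility of $n$-truncation with the three classifications and with the Taylor-tower maps (which is routine given the naturality already established). I expect the latter, though not difficult, to be the one place where some care is genuinely required.
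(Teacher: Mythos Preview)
Your deduction is correct and is precisely the intended one: the paper states the corollary without proof, as an immediate consequence of Theorem~\ref{thm:A} together with Theorems~\ref{thm:KE-mod-functor} and~\ref{thm:fmfld} applied to the truncations $\M_{\leq n}$. The only wrinkle, which you already flag, is that part~(2) of the corollary literally asks for $P_nA$ (not $P_n\tilde A$) as a left Kan extension; this is resolved either by reading the statement modulo the constant summand $A(*)$, or by allowing $G_n$ to be non-pointed and absorbing the constant into it.
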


\begin{remark}
We do not know if our result for the Taylor tower of $A$ is the best possible. It is conceivable that the derivatives of $\tilde{A}$ have, in fact, a $\K\E_2$ or $\K\E_1$-module structure. The Taylor tower of $A(\Sigma X)$ is known to split by work of B{\"o}kstedt et al. \cite{bokstedt/carlsson/cohen/goodwillie/hsiang/madsen:1996}, so, given Conjecture~\ref{conj:splitting}, one might guess that the latter is true.
\end{remark}

\bibliographystyle{amsplain}
\bibliography{mcching}

\end{document}